%% file: mirror-flows.tex
\documentclass[11pt,english,reqno]{article}
\usepackage[T1]{fontenc}
\usepackage{lmodern}

\input{Auxiliaries/Packages}

\input{Auxiliaries/Macros}

\author{Rapha\"el Berthier\thanks{Sorbonne Universit\'e, Inria, Centre Inria de Sorbonne Universit\'e, Paris, France} \and Loucas Pillaud-Vivien\thanks{CERMICS, CNRS, ENPC, Institut Polytechnique de Paris, Marne-la-Vallée, France}}

\begin{document}

\title{Incremental Learning in Mirror Flows}

\maketitle

\begin{abstract}
	We study mirror flows generated by a convex quadratic loss and a general convex lower semicontinuous mirror potential. We show that, when initialized near the boundary of the domain of the mirror potential, their rescaled trajectories converge to a limiting mirror flow whose potential is the indicator function of the domain. In this limit, the primal variable minimizes the loss over a time-dependent hypothesis set: the subdifferential of the support function of the domain, evaluated at the dual variable. This characterization provides a general mechanism for incremental learning in mirror flows.
\end{abstract}

\medskip

\noindent\textbf{Keywords.} Mirror flows, incremental learning, subgradient flows, Mosco convergence, convex analysis.

\smallskip

\noindent\textbf{Mathematics Subject Classification (2020).} 90C25, 34A60, 47J35, 49J45, 68T07.

\setcounter{tocdepth}{2}
\tableofcontents
\newpage

\section{Introduction}

\paragraph*{Context.} 

Neural networks trained with gradient descent often learn solutions of increasing complexity: the model first captures simple structure, then progressively incorporates finer details~\cite{arpit2017closer,kalimeris2019sgd,zhang2025saddle}. This incremental learning phenomenon, often visible as plateaus in the training loss separated by rapid transitions, has attracted significant theoretical attention.

The most detailed analyses of incremental learning have been carried out for diagonal linear networks, including precise descriptions of transition times and plateau levels~\cite{berthier2023incremental,pesme2023saddle}. This level of detail is possible because the training dynamics of these networks reduce to a mirror flow~\cite{woodworth2020kernel}. Mirror flows themselves feature incremental learning when initialized near the boundary of the domain of the mirror potential. This paper gives a rigorous description of this phenomenon for a broad class of mirror flows, thereby generalizing the previous analyses of diagonal linear networks.

\paragraph*{Contributions.} 

We study mirror flows associated with a convex quadratic loss and a general convex lower semicontinuous mirror potential. We prove that, when the mirror flow is initialized near the boundary of the domain of the mirror potential, the rescaled dynamics converge to a limiting mirror flow whose potential is the indicator function of the domain. We establish existence and uniqueness for these limiting dynamics, building on the subgradient-flow theory of~\cite{brezis1973ope}. We show that, in the limit, the primal variable minimizes the loss over a time-varying hypothesis set---the subdifferential of the support function of the domain, evaluated at the dual variable---which evolves throughout training. This evolution is the mechanism behind incremental learning. The result yields several insights.

First, it provides a unified and more rigorous treatment of incremental learning in diagonal linear networks as a special case of mirror flows on the non-negative orthant with entropic potential.

Second, it enables the study of new geometries. We analyze the von Neumann entropic potential on the positive semidefinite cone. When the mirror flow is initialized near the boundary point $0$, we obtain an incremental learning procedure through successive rank increases. Notably, the incremental learning regime on the positive semidefinite cone is qualitatively different from the non-negative orthant case: the primal variable evolves continuously within each fixed-rank phase, rather than jumping between stationary points. This shows that incremental learning can arise from an alternation of slow and rapid dynamics, beyond the saddle-to-saddle dynamics that have already been widely studied~\cite{jacot2021saddle,li2020towards}.

We compare this mirror flow on the positive semidefinite cone with matrix factorization, which also features incremental learning through successive rank increases and for which a precise description is notoriously difficult~\cite{li2020towards,gunasekar2017implicit,arora2019implicit,razin2020implicit}.

Overall, diagonal linear networks are a special case of both mirror flows---for the entropic potential on the non-negative orthant---and matrix factorization---in the diagonal case. This paper suggests that the mirror-flow viewpoint is the sharper framework for analyzing their incremental learning behavior. Consequently, one should be cautious when extrapolating results from diagonal linear networks to matrix factorization or to other neural architectures.

\paragraph{Technical contribution.}This work provides an existence and uniqueness result for mirror flows associated with a convex quadratic loss and a general convex lower semicontinuous mirror potential. The novelty of this result lies in low regularity requirement for the mirror potential; this allows, for instance, to consider the indicator function of a closed convex set. This case is essential to define our limiting dynamics. The article of \cite{attouch2004singular} provides a similar result for more general loss functions, but requires the mirror potential to be strongly convex. This latter restriction does not allow considering indicator function of sets.

In \cite{attouch2004singular}, the regularity induced by the strong convexity of the mirror potential yields a Lipschitz dual dynamics, so that a Picard--Lindelöf fixed-point argument applies. In our setting, such regularity is unavailable; instead, we exploit the structure induced by the quadratic loss, which allows us to recast the dual dynamics as a subgradient flow and to rely on the convex nonsmooth theory of \cite{brezis1973ope,attouch1977convergence}.
\paragraph*{Additional related work.}
That neural networks acquire structure progressively---simple patterns first, finer details later---was already observed in early studies of plateaus in multi-layer perceptrons~\cite{fukumizu2000local}, and has since been a recurring observation across modern architectures and datasets. Deep linear networks with small initialization were the first model in which a sequence of phase transitions could be exhibited, with successive singular modes learned in turn~\cite{saxe2014exact,gidel2019implicit,gissin2020implicit}; this picture was sharpened for matrix factorization, where the trajectory passes through low-rank stationary points of increasing rank~\cite{jacot2021saddle}. A largely independent strand has studied diagonal linear networks (DLNs), whose training dynamics reduce to a mirror flow with entropic potential~\cite{woodworth2020kernel}, and has been pushed further to relate the entire incremental trajectory to the lasso regularization path~\cite{berthier2025diagonal,li2023implicit}. Beyond linear parametrizations, incremental learning has been established for shallow ReLU networks with orthogonal inputs~\cite{boursier2022gradient}, for two-layer networks learning multi-index targets through staircase, leap-complexity, and Grassmannian dynamics~\cite{abbe2023sgd,bietti2023learning}, and for transformers learning in-context $n$-grams~\cite{varre2025learning}, with recent work proposing a unified saddle-to-saddle viewpoint across MLPs, ReLU networks, and transformers~\cite{zhang2025saddle}.
\paragraph*{Outline.}

In Section~\ref{sec:notations}, we recall a few notions of convex analysis and introduce the notation used throughout the paper. In Section~\ref{sec:incremental}, we introduce mirror flows with a convex quadratic loss and a possibly non-smooth mirror potential, and show that they perform incremental learning when initialized near the boundary of the domain of the mirror potential. More precisely, we state the main result in Theorem~\ref{thm:main} in Section~\ref{sec:limiting}, which characterizes the limiting incremental learning dynamics as a non-smooth mirror flow. In Section~\ref{sec:examples}, we illustrate our results on three examples: the non-negative orthant and its connection to diagonal linear networks, the positive semidefinite cone and its connection to matrix factorization, and the probability simplex. Section~\ref{sec:proofs} recalls useful results from~\cite{brezis1973ope,attouch1977convergence} on subgradient flows and their evolutionary convergence, and uses them to prove the results stated in the previous sections. The most technical proofs and the experimental details are deferred to the appendix.

\section{Convex-analysis reminders and notation}
\label{sec:notations}

\paragraph{Convex analysis.} Let $C \subset \R^d$ be a convex set. We denote by $\aff C$ the affine hull of $C$, the smallest affine set containing $C$. We denote by $\ri C$ the relative interior of $C$, the interior of $C$ relative to $\aff C$. 

Let $f : \R^d \to \R \cup \{+\infty\}$ be a convex function. We denote by $\dom f = \{ x \in \R^d \mid f(x) < +\infty \}$ the domain of $f$. We say that $f$ is proper if $\dom f \neq \emptyset$. We say that $f$ is lower semicontinuous if, for all $x \in \R^d$ and all sequences $(x_n)_{n \in \N}$ converging to $x$, $f(x) \leq \liminf_{n\to\infty} f(x_n)$. We denote by $\Gamma_0(\R^d)$ the set of convex, proper, lower semicontinuous functions from $\R^d$ to $\R \cup \{+\infty\}$. We now fix $f \in \Gamma_0(\R^d)$. 

We denote by $\partial f(x) = \{ z \in \R^d \mid \forall y \in \R^d, f(y) \geq f(x) + \langle z , y-x \rangle \}$ the subdifferential of $f$ at $x$. We denote by $\dom \partial f = \{x \in \R^d \mid \partial f(x) \neq \emptyset\}$ the domain of $\partial f$. The subdifferential generalizes the gradient: if $f$ is differentiable at $x$, then $\partial f(x) = \{ \nabla f(x) \}$~\cite[Prop.~17.31]{bauschke2017convex}.

The Fenchel conjugate of $f$ is the convex function $f^*: \R^d \to \R \cup \{+\infty\}$ defined by $f^*(w) = \sup_{x \in \R^d} \langle w , x \rangle - f(x)$. Fenchel conjugacy preserves $\Gamma_0(\R^d)$ and is an involution on this class: if $f \in \Gamma_0(\R^d)$, then $f^* \in \Gamma_0(\R^d)$ and $(f^*)^* = f$~\cite[Coro.~12.2.1]{rockafellar1970convex}. Moreover, for all $x,w \in \R^d$, $w \in \partial f(x)$ if and only if $x \in \partial f^*(w)$~\cite[Coro.~16.30]{bauschke2017convex}.

Let $C$ be a closed nonempty convex subset of $\R^d$. We denote by $\iota_C : \R^d \to \R \cup \{+\infty\}$ the indicator function of $C$, defined as $\iota_C(x) = 0$ if $x \in C$ and $\iota_C(x) = +\infty$ otherwise. We denote by $\sigma_C : \R^d \to \R \cup \{+\infty\}$ the support function of $C$, defined as $\sigma_C(w) = \sup_{x \in C} \langle w , x \rangle$. The indicator and support functions are proper, convex, and lower semicontinuous. Moreover, they are conjugate to each other: $\iota_C^* = \sigma_C$ and $\sigma_C^* = \iota_C$~\cite[Thm.~13.2]{rockafellar1970convex}. 

\paragraph{Other notation.} We denote by $\Argmin_{x \in C} f(x)$ the set of minimizers of $f$ on $C$. If this set is a singleton, we denote by $\argmin_{x \in C} f(x)$ its unique element.

If $M$ is a positive semidefinite matrix, we denote by $M^\dagger$ its Moore-Penrose pseudoinverse, by $M^{1/2}$ its positive semidefinite square root, by $\Span M$ the span of its columns, and by $\ker M$ its kernel.

For $p \in \{1,2\}$, an interval $I \subset \R$, and a measure $\mu$ on $I$, we denote by $L^p(I,\R^d,\mu)$ the space of functions $f : I \to \R^d$ such that $\int_I \Vert f(t) \Vert^p \diff \mu(t) < +\infty$. When $\mu$ has density $g(t)$ with respect to the Lebesgue measure, we write $\mu = g(t) \diff t$.

We write $x \geq 0$ (resp.~$x > 0$) to mean that all coordinates of $x$ are nonnegative (resp.~positive). If $x$ is a real number, we denote by $x_+ = \max(x,0)$ its positive part and by $x_- = \max(-x,0)$ its negative part.
	
	When $\varphi:\R\to\R$ and $x \in \R^d$, we denote by $\varphi(x) = (\varphi(x_1), \dots, \varphi(x_d))$ the componentwise application of $\varphi$ to $x$. This convention is used, for instance, for $\varphi = \log$, $\varphi = \exp$, $\varphi = (\cdot)_+$, and $\varphi = (\cdot)_-$.

	We denote by $\mathbf{1} = (1, \dots, 1)$ the vector of ones, whose dimension is implicit from the context.

	If $x, y \in \R^d$, we denote by $x \circ y$ the componentwise product of $x$ and $y$.

\section{Incremental learning in mirror flows}
\label{sec:incremental}

Let $\ell$ denote a convex quadratic function 
\begin{equation*}
	\ell(x) = \frac{1}{2} \langle x , M x \rangle - \langle q , x \rangle + c \, , \qquad x \in \R^d \, , \qquad d \geq 1 \, ,
\end{equation*}
where $M \in \R^{d \times d}$ is a positive semidefinite matrix, $q \in \Span M$ (so that $\ell$ is bounded below), and $c \in \R$.
Consider $h \in \Gamma_0(\R^d)$. We denote $C = \dom h$. We assume that $C$ is closed. The functions $\ell$ and $h$ will be referred to as the loss and the mirror potential, respectively.

\subsection{Mirror flows}

Let $w : \R_{\geq 0} \to \R^d$. If $h$ is a smooth function, we say that $w$ is a solution of the \emph{mirror flow} with mirror potential $h$ and loss $\ell$ if there exists $x : \R_{\geq 0} \to \R^d$ such that for all $t \geq 0$, $w$ is differentiable at $t$ and
\begin{align}
	\label{eq:mirror-flow}
	&\frac{\diff w}{\diff t}(t) = - \nabla \ell(x(t)) \, , &&w(t) = \nabla h(x(t)) \, .
\end{align}
See~\cite{nemirovskij1983problem} for the historical introduction. The mirror flow can be seen as a generalization of the gradient flow constrained to $C$, through a change of geometry induced by the mirror potential $h$.

\begin{example}[logistic differential equation]
	\label{ex:logistic-1}
	As a first illustration of the results of this work, we use the logistic differential equation as a running example: 
	\begin{equation*}
		\frac{\diff x}{\diff t}(t) = x(t) (1-x(t)) \, , 
	\end{equation*}
	where $x(t) \geq 0$. 
	
		This equation corresponds to a mirror flow. For $x \in \R$, define the loss $\ell(x) = \frac{1}{2} (x-1)^2$ and the entropic mirror potential $h(x) = x \log x - x$ for $x \geq 0$ (with the convention $0 \log 0 = 0$) and $h(x) = +\infty$ otherwise. The logistic differential equation can be rewritten as 
\begin{align*}
	&\frac{\diff w}{\diff t}(t) = 1 - x(t) \, , &&w(t) = \log x(t) \, ,
\end{align*}
which corresponds to Eq.~\eqref{eq:mirror-flow}: $w(t) = \log x(t)$ is a solution of the mirror flow with mirror potential $h$ and loss $\ell$.

		The logistic differential equation can be solved in closed form: its positive solutions are $x(t) = f(t-t_0)$ for some $t_0 \in \R$, where $f(t) = 1/(1+e^{-t})$ is the logistic function. Thus little is gained by applying our results to this equation, but it provides a simple running example. More substantial examples are given in Section~\ref{sec:examples}.
\end{example}

In this paper, we need to generalize the definition of a mirror flow. We no longer assume that $h$ is smooth.

\begin{definition}[mirror flow]
	\label{def:mirror-flow}
	Let $w : \R_{\geq 0} \to \R^d$. We say that $w$ is a solution of the \emph{mirror flow} with mirror potential $h$ and loss $\ell$ if there exists $x : \R_{\geq 0} \to \R^d$ such that for almost every (a.e.) $t \geq 0$, $w$ is differentiable at $t$ and
\begin{align*}
	\frac{\diff w}{\diff t}(t) = - \nabla \ell(x(t)) \, , \qquad w(t) \in \partial h(x(t)) \, .
\end{align*}
\end{definition}
We refer to $w$ as the \emph{dual variable} and $x$ as a \emph{primal variable} of the mirror flow. In \cite{attouch2004singular}, this mirror flow was called a \emph{singular Riemannian barrier system}.

\begin{example}[logistic differential equation, continued]
	\label{ex:logistic-2}
		Consider the following mirror flow related to the logistic differential equation. For $x \in \R$, take the same loss $\ell(x) = \frac{1}{2}(x-1)^2$ as in Example~\ref{ex:logistic-1}, but replace the entropic mirror potential by $h(x) = \iota_{\R_{\geq 0}}(x)$. Then
	\begin{equation*}
		\partial h(x) = \partial \iota_{\R_{\geq 0}}(x) = \begin{cases}
			\{0\} & \text{if } x > 0 \, , \\
			\R_{\leq 0} & \text{if } x = 0 \, , \\
			\emptyset & \text{if } x < 0 \, .
		\end{cases}
	\end{equation*}
	As a consequence, $w$ is a solution of the mirror flow with mirror potential $h$, loss $\ell$, and primal variable $x$ if and only if for a.e.~$t \geq 0$, $w$ is differentiable at $t$ and
	\begin{align*}
		&\frac{\diff w}{\diff t}(t) = 1 - x(t) \, ,  &&w(t) \leq 0 \, , &&x(t) \geq 0 \, , &&w(t)x(t) = 0 \, . 
	\end{align*}
	Here, the differential inclusion condition of the mirror flow takes the form of sign constraints on $w$ and $x$ with a complementarity slackness condition. A solution of this mirror flow initialized from $w(0) = -1$ is given by $w(t) = \min(-1+t,0)$, with associated primal variable 
	\begin{equation}
		\label{eq:logistic-primal}
	x(t) = \begin{cases}
		0 & \text{if } t < 1 \, , \\
		1 & \text{if } t \geq 1 \, .
	\end{cases}
	\end{equation}
		In Theorem~\ref{thm:existence-mirror-flow} below, we show that this solution $w(t)$ is the unique solution among Lipschitz functions and that the associated primal variable $x(t)$ is essentially unique.

	In this example, the primal solution $x(t)$ of the mirror flow is the limit of the time-rescaled sigmoid function $x^\mu(t) = f(\mu (t-1))$ as $\mu \to +\infty$ (pointwise, for all $t \neq 1$). This turns out to be a general phenomenon: mirror flows associated with non-smooth mirror potentials naturally arise as limits of mirror flows with smooth mirror potentials. This statement is made concrete in Section~\ref{sec:limiting} below and motivates the generalized definition of mirror flows proposed in Definition~\ref{def:mirror-flow}.
\end{example}

As the function $\ell$ is quadratic, a mirror flow can be seen as a subgradient flow in the dual variable. 

\begin{proposition}
	\label{prop:equivalences-mf-gf}
	Let $w: \R_{\geq 0} \to \R^d$. Assume that $w(0) \in \ri \dom h^* + \Span M$. The following statements are equivalent.
	\begin{enumerate}[label=(\alph*)]
		\item\label{it:mf} $w$ is a solution of the mirror flow with mirror potential $h$ and loss $\ell$.
		\item\label{it:prec-gf} For a.e.~$t \geq 0$, $w$ is differentiable at $t$ and
		\begin{align*}
			&\frac{\diff w}{\diff t}(t) + M\partial \Phi(w(t)) \ni 0 \, , &&\text{where } \Phi(w) = h^*(w) - \langle M^\dagger q , w \rangle \, .
		\end{align*}
		\item\label{it:gf} Define $u(t) = (M^\dagger)^{1/2} (w(t) - w(0))$. For a.e.~$t \geq 0$, $u$ is differentiable at $t$ and
		\begin{align*}
			&\frac{\diff u}{\diff t}(t) + \partial \varphi(u(t)) \ni 0 \, , &&\text{where } \varphi(u ) = h^*\left(w(0)+M^{1/2}u\right) - \left\langle (M^\dagger)^{1/2} q , u \right\rangle \, .
		\end{align*}
	\end{enumerate}
\end{proposition}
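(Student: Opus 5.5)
We prove \ref{it:mf}$\Leftrightarrow$\ref{it:prec-gf} by pointwise convex duality, and \ref{it:prec-gf}$\Leftrightarrow$\ref{it:gf} by a linear change of variables combined with a subdifferential chain rule. The hypothesis $w(0) \in \ri \dom h^* + \Span M$ is used only in the chain rule.

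\emph{Step 1: \ref{it:mf}$\Leftrightarrow$\ref{it:prec-gf}.} Since $q \in \Span M$, we have $MM^\dagger q = q$, so
\begin{equation*}
\nabla \ell(x) = Mx - q = M(x - M^\dagger q).
\end{equation*}
Adding a linear function to $h^*$ shifts its subdifferential, with no qualification condition needed: $\partial \Phi(w) = \partial h^*(w) - M^\dagger q$. By the conjugacy relation recalled in Section~\ref{sec:notations}, $w \in \partial h(x)$ if and only if $x \in \partial h^*(w)$.

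Assume \ref{it:mf}. At a.e.\ $t$ we have $x(t) - M^\dagger q \in \partial\Phi(w(t))$, hence $\frac{\diff w}{\diff t}(t) = -M(x(t)-M^\dagger q) \in -M\partial\Phi(w(t))$, which is \ref{it:prec-gf}.

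Conversely, assume \ref{it:prec-gf}. At each $t$ in the full-measure set where the inclusion holds, pick $z(t) \in \partial\Phi(w(t))$ with $\frac{\diff w}{\diff t}(t) = -Mz(t)$, and set $x(t) = z(t) + M^\dagger q$. At the remaining times, define $x(t)$ arbitrarily. Definition~\ref{def:mirror-flow} only asks for the existence of some function $x$, with no measurability requirement, so this pointwise choice suffices.

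\emph{Step 2: $w$ stays in the affine space $w(0)+\Span M$.} From \ref{it:prec-gf}, or from \ref{it:gf} through the relation below, $\frac{\diff w}{\diff t}(t) \in \Span M$ for a.e.\ $t$. To conclude that $w(t) - w(0) \in \Span M$ for all $t$, we need to integrate the derivative. This requires $w$ to be absolutely continuous, which mere a.e.\ differentiability does not give. I would make explicit that solutions are understood as locally absolutely continuous (as in the Lipschitz class used in Theorem~\ref{thm:existence-mirror-flow}), and then integrate.

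Let $P = M^{1/2}(M^\dagger)^{1/2}$, the orthogonal projector onto $\Span M$. Since $w(t)-w(0) \in \Span M$, we get $w(t) = w(0) + M^{1/2}u(t)$ and $M^{1/2}\frac{\diff u}{\diff t} = \frac{\diff w}{\diff t}$. In direction \ref{it:gf}$\Rightarrow$\ref{it:prec-gf}, we define $w$ from $u$ through this identity, so the same relations hold.

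\emph{Step 3: chain rule and \ref{it:prec-gf}$\Leftrightarrow$\ref{it:gf}.} Write $\varphi = h^* \circ A - \langle (M^\dagger)^{1/2} q, \cdot\rangle$ with the affine map $A(u) = w(0) + M^{1/2}u$. The range of $A$ is $w(0) + \Span M$. This set meets $\ri\dom h^*$ exactly when $w(0) \in \ri\dom h^* + \Span M$, which is our hypothesis. Rockafellar's chain rule [Thm.~23.9] then gives equality rather than mere inclusion:
\begin{equation*}
\partial\varphi(u) = M^{1/2}\partial h^*(A u) - (M^\dagger)^{1/2} q.
\end{equation*}

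For \ref{it:prec-gf}$\Rightarrow$\ref{it:gf}, apply $(M^\dagger)^{1/2}$ to $\frac{\diff w}{\diff t} = -M(z - M^\dagger q)$ with $z \in \partial h^*(w)$. Using $(M^\dagger)^{1/2}M = M^{1/2}$ and $(M^\dagger)^{1/2}MM^\dagger q = (M^\dagger)^{1/2}q$, we obtain
\begin{equation*}
\frac{\diff u}{\diff t} = -M^{1/2}z + (M^\dagger)^{1/2}q \in -\partial\varphi(u).
\end{equation*}
For the converse, multiply by $M^{1/2}$ and use $M^{1/2}(M^\dagger)^{1/2}q = Pq = q = MM^\dagger q$ to recover \ref{it:prec-gf}.

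The main obstacle is Step 3. The chain rule must hold with equality, and this is precisely where the relative-interior assumption enters. A secondary issue is Step 2, where the implicit absolute-continuity convention has to be stated explicitly.
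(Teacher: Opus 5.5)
Your proposal is correct and follows essentially the paper's argument. It uses the same pointwise conjugacy computation for (a)$\Leftrightarrow$(b). For (b)$\Leftrightarrow$(c) it uses the same chain rule $\partial\varphi(u)=M^{1/2}\partial h^*(w(0)+M^{1/2}u)-(M^\dagger)^{1/2}q$, whose qualification condition is exactly $w(0)\in\ri\dom h^*+\Span M$, together with the identities $(M^\dagger)^{1/2}M=M^{1/2}$ and $MM^\dagger q=q$.

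Your Step 2 makes explicit the relation $w(t)=w(0)+M^{1/2}u(t)$, which the paper uses silently. Going from (a)/(b) to (c), this relation does need $w$ locally absolutely continuous, since a.e.\ differentiability alone is not enough. Going from (c), it cannot be derived at all when $M$ is singular, because $u$ only sees the projection of $w-w(0)$ onto $\Span M$. So reading (c)$\Rightarrow$(b) with $w$ defined from $u$, as the paper itself does in its existence proof, is the right interpretation.
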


Statement~\ref{it:prec-gf} shows that the dual variable $w$ follows a differential inclusion. When $M$ is positive definite, this differential inclusion is a preconditioned subgradient flow. However, the connection to subgradient flows is not straightforward when $M$ is not positive definite. As we want to encompass this latter case, we prefer statement~\ref{it:gf} for technical reasons: $w$ can be written as $w(0) + M^{1/2} u$ where $u$ follows a subgradient flow (see Def.~\ref{def:sol-subgradient-flow} for a definition of the subgradient flow).

We note that the equivalence between statements~\ref{it:mf} and~\ref{it:prec-gf} corresponds to the equivalence between the singular Riemannian barrier system (SRB) and its dual (SRBaux) in \cite{attouch2004singular}. However, in our case, the additional equivalence with statement~\ref{it:gf} is crucial to leverage the theory of subgradient flows.

This proposition is useful for building intuition about the dual dynamics; it is also key to our analyses of mirror flows and their limits, as it allows us to leverage corresponding results on subgradient flows. For instance, this reduction yields the following result.

\begin{thm}
	\label{thm:existence-mirror-flow}
	Let $w_0 \in \dom \partial h^* \cap (\ri \dom h^* + \Span M)$. There exists a unique Lipschitz solution $w : \R_{\geq 0} \to \R^d$ of the mirror flow with mirror potential $h$ and loss $\ell$ such that $w(0) = w_0$. 

Let $x$ be an associated primal variable. Then for a.e.~$t \geq 0$,
	\begin{equation*}
		x(t) \in \underset{y\in\partial h^*(w(t))}{\Argmin} \ell(y) \, .
	\end{equation*}
\end{thm}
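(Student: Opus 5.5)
The plan is to reduce everything to Brezis' theory of subgradient flows through Proposition~\ref{prop:equivalences-mf-gf}\ref{it:gf}.

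\textbf{Step 1: the function $\varphi$ is admissible.} Let $w_0 \in \dom\partial h^* \cap (\ri\dom h^* + \Span M)$. Define
\[
\varphi(u) = h^*(w_0 + M^{1/2}u) - \langle (M^\dagger)^{1/2} q, u\rangle .
\]
We need $\varphi \in \Gamma_0(\R^d)$.
\begin{itemize}
\item Convexity and lower semicontinuity pass from $h^* \in \Gamma_0(\R^d)$ through the affine precomposition.
\item Properness holds because $0 \in \dom \varphi$, since $w_0 \in \dom h^*$.
\end{itemize}
Next, we want $0 \in \dom\partial\varphi$. This is exactly where $w_0 \in \ri\dom h^* + \Span M$ enters. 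The range of $u \mapsto w_0 + M^{1/2}u$ is the affine set $w_0 + \Span M$, and this set meets $\ri \dom h^*$. By the chain rule for the composition with a linear map \cite[Thm.~23.9]{rockafellar1970convex}, we get
\[
\partial\varphi(u) = M^{1/2}\,\partial h^*(w_0 + M^{1/2}u) - (M^\dagger)^{1/2} q .
\]
Hence $\partial\varphi(0) \neq \emptyset$, because $\partial h^*(w_0) \neq \emptyset$.

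\textbf{Step 2: existence and uniqueness for the dual variable.} Brezis' theorem \cite{brezis1973ope} now applies to $\varphi$ with initial condition $u(0) = 0 \in \dom\partial\varphi$. It gives a unique solution $u$ of $\dot u + \partial\varphi(u) \ni 0$. This solution is Lipschitz on $\R_{\geq 0}$, with $\|\dot u(t)\| \le \|\partial^0\varphi(0)\|$.

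Set $w = w_0 + M^{1/2}u$. Then $w$ is Lipschitz and satisfies $w(0) = w_0$. By the equivalence of Proposition~\ref{prop:equivalences-mf-gf}, $w$ is a solution of the mirror flow.

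For uniqueness, let $\tilde w$ be any Lipschitz mirror-flow solution with $\tilde w(0) = w_0$. Its transform $\tilde u = (M^\dagger)^{1/2}(\tilde w - w_0)$ is Lipschitz, hence absolutely continuous. By the proposition, $\tilde u$ solves the same subgradient inclusion with $\tilde u(0) = 0$. Uniqueness for subgradient flows among absolutely continuous solutions follows from monotonicity: $\tfrac{d}{dt}\|u-\tilde u\|^2 \le 0$. So $\tilde u = u$.

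It remains to recover $\tilde w$ from $\tilde u$. Note that $\tilde w - w_0 = \int \dot{\tilde w} = -\int \nabla\ell(x) \in \Span M$, since $\nabla \ell(x) = Mx - q$ and $q \in \Span M$. Therefore
\[
M^{1/2}(M^\dagger)^{1/2}(\tilde w - w_0) = \tilde w - w_0 ,
\]
and so $\tilde w = w$.

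\textbf{Step 3: characterization of the primal variable.} This is the step I expect to need the most care. Let $x$ be a primal variable associated with $w$. For a.e.\ $t$, we have $x(t) \in \partial h^*(w(t))$ by Fenchel duality, and
\[
\dot w(t) = q - Mx(t).
\]
From Brezis' theory, for a.e.\ $t$ the right derivative satisfies $-\dot u(t) = \partial^0\varphi(u(t))$, the element of minimal norm. By the chain rule formula from Step 1, the set $\partial\varphi(u(t))$ equals $\{M^{1/2}y - (M^\dagger)^{1/2} q : y \in \partial h^*(w(t))\}$.

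Now compute the norm of a generic element, using $q \in \Span M$ so that $(M^\dagger)^{1/2} q = M^{1/2} M^\dagger q$:
\[
\|M^{1/2}y - (M^\dagger)^{1/2} q\|^2 = \langle y, My\rangle - 2\langle q, y\rangle + \langle q, M^\dagger q\rangle = 2\ell(y) + \text{const}.
\]
Minimizing the norm over $y \in \partial h^*(w(t))$ is therefore the same as minimizing $\ell$ over that set.

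It remains to show that $x(t)$ realizes this minimum. On one hand, $-\dot u(t) = (M^\dagger)^{1/2}\dot w(t) = M^{1/2}x(t) - (M^\dagger)^{1/2} q$. This holds because $(M^\dagger)^{1/2}M = M^{1/2}$ and $(M^\dagger)^{1/2} q = M^{1/2}M^\dagger q$. On the other hand, $-\dot u(t)$ is the minimal-norm element $\partial^0\varphi(u(t))$, so $M^{1/2}x(t) - (M^\dagger)^{1/2} q$ has minimal norm in $\partial\varphi(u(t))$. Since $x(t) \in \partial h^*(w(t))$, the norm identity above gives that $x(t)$ minimizes $\ell$ over $\partial h^*(w(t))$, as claimed.

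The delicate point is identifying $\dot u$ with the minimal-norm selection for a.e.\ $t$ for a solution that is merely Lipschitz. This is standard in \cite{brezis1973ope}: the right derivative exists everywhere and equals $-\partial^0\varphi(u(t))$. It suffices to invoke this at points of differentiability.
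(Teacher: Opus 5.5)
Your proposal is correct and follows essentially the same route as the paper. You reduce to the subgradient flow of $\varphi$ from Proposition~\ref{prop:equivalences-mf-gf}\ref{it:gf}, apply Brezis' theorem for existence and uniqueness, and obtain the minimization property by identifying $-\dot u(t)$ with the minimal-norm element of $\partial\varphi(u(t))$ via $\|M^{1/2}y-(M^\dagger)^{1/2}q\|^2 = 2\ell(y)+\text{const}$.

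Your explicit check that $\tilde w - w_0 \in \Span M$ is worth keeping. It is what makes $\tilde u = u$ force $\tilde w = w$ when $M$ is singular, a step the paper leaves implicit. The missing minus sign in $-\dot u(t) = (M^\dagger)^{1/2}\dot w(t)$ is a harmless typo, since your final expression $M^{1/2}x(t) - (M^\dagger)^{1/2}q$ is correct.
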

If $h^*$ is differentiable on $\dom \partial h^*$, the minimization statement is vacuous as $\partial h^*(w(t)) = \{ \nabla h^*(w(t)) \}$ and $x(t) = \nabla h^*(w(t))$: we get no additional property from the definition of a mirror flow. However, we will study scalings of mirror flows that converge to mirror flows for which $h^*$ is not differentiable and thus this statement is not vacuous: typically, $\partial h^*(w(t))$ will not be a singleton. This result will be key in showing that mirror flows perform incremental learning in these scalings: they minimize $\ell$ over a hypothesis set $\partial h^*(w(t))$ that varies through training (typically with some sharp transitions). 

While there exists a solution such that the dual variable $w$ is Lipschitz, the associated primal variable $x$ might not even be continuous (see Example~\ref{ex:logistic-2}).

We note that the primal variable $x(t)$ is not necessarily unique. However, $\nabla \ell(x(t))$ is a.e.~unique as it is a.e.~the derivative of the unique dual variable $w(t)$. Similarly, by the minimization property above, $\ell(x(t))$ is a.e.~unique.

In the case where $M$ is invertible, the initialization requirement is $w_0 \in \dom \partial h^*$. Moreover, as $\ell$ is strictly convex in this case, the solution to $\min_{y \in \partial h^*(w(t))} \ell(y)$ is unique, so that the primal variable $x$ is uniquely defined a.e.

By~\cite[Corollary 16.18]{bauschke2017convex}, $\ri \dom h^* \subset \dom \partial h^*$; hence $w_0 \in \ri \dom h^*$ is a sufficient condition for the theorem above.

Finally, we note that this theorem compares with~\cite[Theorem 3.1]{attouch2004singular}, which provides a similar existence and uniqueness result when $\ell$ is only assumed to have a Lipschitz gradient, but $h$ is assumed to be strongly convex. In this case, there is also uniqueness of the primal trajectory $x(t)$, thus no minimization property is needed.

\begin{example}[logistic differential equation, continued]
	\label{ex:logistic-3}
	We return to the setting of Example~\ref{ex:logistic-2} where $x \in \R$, $\ell(x) = \frac{1}{2}(x - 1)^2$ and $h(x) = \iota_{\R_{\geq 0}}(x)$. The unique Lipschitz solution is $w(t) = \min(-1+t,0)$. We analyze the minimization property of Theorem~\ref{thm:existence-mirror-flow}.

	We have $h^* = \iota_{\R_{\geq 0}}^* = \sigma_{\R_{\geq 0}} = \iota_{\R_{\leq 0}}$. Thus 
\begin{equation*}
	\partial h^*(w) = \begin{cases}
		\{0\} & \text{if } w < 0 \, , \\
		\R_{\geq 0} & \text{if } w = 0 \, , \\
		\emptyset & \text{if } w > 0 \, .
	\end{cases}
\end{equation*}
By Theorem~\ref{thm:existence-mirror-flow}, we have:
\begin{itemize}
	\item for a.e. $t < 1$, $w(t) < 0$ and thus $x(t) \in \partial h^*(w(t)) = \{0\}$, i.e. $x(t) = 0$.
	\item for a.e. $t \geq 1$, $w(t) = 0$ and thus $x(t) = \argmin_{y \in \partial h^*(w(t))} \ell(y) = \argmin_{y \geq 0} \ell(y) = 1$.
\end{itemize}
These statements are consistent with the solution found in Eq.~\eqref{eq:logistic-primal}. 
\end{example}

\subsection{Limiting dynamics}
\label{sec:limiting}

We now assume that $\dom h^* = \R^d$; note that by~\cite[Coro.~16.18]{bauschke2017convex} we also have $\dom \partial h^* = \R^d$. Let $\varepsilon > 0$ be a small parameter, and let $w^\varepsilon$ be the unique solution of the mirror flow with mirror potential $h$ and loss $\ell$, initialized from $w^\varepsilon(0) = w^\varepsilon_0 \in \R^d$. Let $x^\varepsilon$ denote an associated primal variable. The reader may think of $h$ as smooth, as is the case in all our examples; smoothness is, however, not required for the result below.

Assume that $\mu^\varepsilon := \Vert w^\varepsilon_0 \Vert \xrightarrow[\varepsilon\to 0]{} + \infty$. This assumption is often associated with a primal initialization converging to the boundary of $C$.

\begin{example}[logistic differential equation, continued]
	\label{ex:logistic-4}
	We consider the logistic differential equation introduced in Example~\ref{ex:logistic-1}, which is a mirror flow with entropic mirror potential $h(x) = x \log x - x$ for $x \geq 0$ and $h(x) = +\infty$ otherwise. The domain of the mirror potential is $C = \R_{\geq 0}$. We initialize the primal variable as $x^\varepsilon_0 = \varepsilon$, which converges to the boundary of~$C$ as $\varepsilon \to 0$. This gives the dual initialization $w^\varepsilon_0 = \log \varepsilon$, so that, if $\varepsilon \leq 1$, $\mu^\varepsilon = \vert w^\varepsilon_0 \vert = \log \frac{1}{\varepsilon} \xrightarrow[\varepsilon\to 0]{} +\infty$.
\end{example}

More generally, when $h$ is essentially smooth~\cite[Sec.~26]{rockafellar1970convex}, the divergence of $\mu^\varepsilon$ is tied to the convergence of the primal initialization to the boundary of $C$. By definition, $h$ is essentially smooth if $C$ has nonempty interior, $h$ is differentiable on the interior of $C$, and for any sequence $(x_k)_{k \in \N}$ in the interior of $C$ converging to its boundary, $\lim_{k \to \infty} \Vert \nabla h(x_k) \Vert = +\infty$. Under this assumption, it is sufficient to initialize $x^\varepsilon_0$ in the interior of $C$ and let it converge to the boundary of $C$ as $\varepsilon \to 0$. A sufficient condition for essential smoothness is Legendre type. Example~\ref{ex:logistic-4} and the examples of Sections~\ref{subsec:diagonal}--\ref{subsec:matrix} are of Legendre type.

We return to our general discussion on the limiting dynamics. In the asymptotic regime $\varepsilon\to 0$, we have $\mu^\varepsilon \to + \infty$, and we rescale the primal and dual variables in time and space as follows:
\begin{align*}
	&\overline{w}^\varepsilon(s) = \frac{1}{\mu^\varepsilon} w^\varepsilon(\mu^\varepsilon s) \, , &&\overline{x}^\varepsilon(s) = x^\varepsilon(s \mu^\varepsilon) \, , &&s \geq 0 \, .
\end{align*}
This time rescaling is the one required to obtain a nontrivial limit and incremental learning. Indeed, the rescaled functions themselves follow a mirror flow: for a.e.~$s \geq 0$, $\overline{w}^\varepsilon$ is differentiable at $s$ and
\begin{align*}
	&\frac{\diff \overline{w}^\varepsilon}{\diff s}(s) = -\nabla \ell(\overline{x}^\varepsilon(s)) \, , &&\overline{w}^\varepsilon(s) \in \partial \left( \frac{1}{\mu^\varepsilon} h \right) (\overline{x}^\varepsilon(s)) \, ,
\end{align*}
see Section~\ref{sec:proof-main} for the computations. Informally, taking the limit $\varepsilon \to 0$, we have $\displaystyle \frac{1}{\mu^\varepsilon} h \to \iota_C$, and thus it is natural to expect convergence to the mirror flow with mirror potential $\iota_C$ and loss $\ell$, initialized at $\overline{w}_0 = \lim_{\varepsilon \to 0} \overline{w}^\varepsilon(0) = \lim_{\varepsilon \to 0} \frac{1}{\mu^\varepsilon} w^\varepsilon_0$: for a.e.~$s \geq 0$, $\overline{w}$ is differentiable at $s$ and
	\begin{align}
		\label{eq:limit_mirror_flow}
		&\frac{\diff\overline{w}}{\diff s}(s) = -\nabla \ell(\overline{x}(s)) \, , &&\overline{w}(s) \in \partial \iota_C(\overline{x}(s)) \, .
	\end{align}
The following theorem specifies under which assumptions and in which sense the above derivation is true.
\begin{thm}
	\label{thm:main}
	Denote $\overline{w}_0^\varepsilon = \overline{w}^\varepsilon(0) = \frac{1}{\mu^\varepsilon} w^\varepsilon_0$. We assume that:
	\begin{enumerate}[label=(\roman*)]
		\item\label{it:main-1} as $\varepsilon \to 0$, $\overline{w}^\varepsilon_0$ converges to some $\overline{w}_0 \in \dom \partial \sigma_C \cap \left(\ri \dom \sigma_C + \Span M\right)$, 
		\item\label{it:main-2} for $\varepsilon > 0$ small enough, $\overline{w}_0^\varepsilon - \overline{w}_0 \in \aff \dom \sigma_C$.
	\end{enumerate}
	Then there exists a unique Lipschitz solution $\overline{w}$ of the mirror flow with mirror potential~$\iota_C$ and loss~$\ell$, initialized at $\overline{w}(0) = \overline{w}_0$, as written in Eq.~\eqref{eq:limit_mirror_flow}. Let $\overline{x}$ be an associated primal variable. Let $S > 0$. Then:
	\begin{enumerate}[label=(\alph*)]
		\item\label{it:main-statement-1} $\overline{w}^\varepsilon \xrightarrow[\varepsilon\to 0]{} \overline{w}$ uniformly on $[0,S]$.
		\item \label{it:main-statement-2} We have the convergences
		\begin{align*}
			\nabla \ell(\overline{x}^\varepsilon(s))
			&\xrightarrow[\varepsilon\to 0]{}
			\nabla \ell(\overline{x}(s))
			&&\text{in } L^2([0,S],\R^d, s\diff s),\\
			\ell(\overline{x}^\varepsilon(s))
			&\xrightarrow[\varepsilon\to 0]{}
			\ell(\overline{x}(s))
			= \min_{y \in \partial \sigma_C(\overline{w}(s))} \ell(y)
			&&\text{in } L^1([0,S],\R, s\diff s).
		\end{align*}
		\item Let $\overline{y}:[0,S]\to\R^d$ be an accumulation point of $\overline{x}^\varepsilon$ in the sense of essential pointwise convergence: there exists a subsequence $\varepsilon_k \xrightarrow[k\to\infty]{} 0$ such that, for a.e.~$s \in [0,S]$, $\overline{x}^{\varepsilon_k}(s) \xrightarrow[k\to\infty]{} \overline{y}(s)$. Then, for a.e.~$s \in [0,S]$, $\overline{y}(s) \in \underset{y \in \partial \sigma_C(\overline{w}(s))}{\Argmin} \ell(y)$.
	\end{enumerate}
\end{thm}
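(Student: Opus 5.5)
Our plan is to reduce everything to subgradient flows via Proposition~\ref{prop:equivalences-mf-gf}~\ref{it:gf} and then invoke evolutionary convergence of subgradient flows under Mosco convergence of the potentials, in the spirit of~\cite{attouch1977convergence}.

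\textbf{Existence and uniqueness of the limit.} Apply Theorem~\ref{thm:existence-mirror-flow} with mirror potential $\iota_C$. Since $\iota_C^* = \sigma_C$, assumption~\ref{it:main-1} gives exactly $\overline{w}_0 \in \dom\partial\sigma_C \cap (\ri\dom\sigma_C + \Span M)$. This yields a unique Lipschitz $\overline{w}$ and, for a.e.~$s$, $\overline{x}(s)\in\Argmin_{y\in\partial\sigma_C(\overline{w}(s))}\ell(y)$.

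\textbf{Setting up the reduction.} First we verify that $\overline{w}^\varepsilon$ is the mirror flow for the potential $h/\mu^\varepsilon$. The conjugate is $(h/\mu^\varepsilon)^*(w) = h^*(\mu^\varepsilon w)/\mu^\varepsilon =: g_\varepsilon(w)$, which is finite everywhere. Write $u^\varepsilon(s) = (M^\dagger)^{1/2}(\overline{w}^\varepsilon(s)-\overline{w}^\varepsilon_0)$ and $\overline{u}(s)=(M^\dagger)^{1/2}(\overline{w}(s)-\overline{w}_0)$. These are subgradient flows from $0$ with potentials
\[
\varphi_\varepsilon(u) = g_\varepsilon(\overline{w}^\varepsilon_0 + M^{1/2}u) - \langle (M^\dagger)^{1/2}q,u\rangle
\]
and
\[
\varphi(u)=\sigma_C(\overline{w}_0+M^{1/2}u)-\langle (M^\dagger)^{1/2}q,u\rangle .
\]

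The key step is to show $\varphi_\varepsilon \to \varphi$ in the Mosco (equivalently, in finite dimension, epi-/Gamma-) sense. The starting point is $g_\varepsilon \to \sigma_C$ pointwise, and monotonically up to constants. Indeed $g_\varepsilon(w) = \sup_x \langle w,x\rangle - h(x)/\mu^\varepsilon$. After normalizing so that $\min h$ or $h(x_0)$ is finite, the quantity $h/\mu^\varepsilon$ decreases to $\iota_C$ on $\dom h$, so $g_\varepsilon \uparrow \sigma_C$ up to $O(1/\mu^\varepsilon)$ shifts. Monotone increasing sequences of lsc convex functions epi-converge to their pointwise supremum. We then have to handle the moving base point $\overline{w}^\varepsilon_0\to\overline{w}_0$. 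This is where assumption~\ref{it:main-2} enters. The perturbation $\overline{w}^\varepsilon_0-\overline{w}_0$ lies in $\aff\dom\sigma_C$, so translating by it preserves the relative-interior structure. Lower semicontinuity along translated sequences then gives the liminf inequality. For the recovery sequence we take $u_\varepsilon = u - M^{\dagger 1/2}(\overline{w}^\varepsilon_0-\overline{w}_0)$-type corrections, or we use continuity of $\sigma_C$ on $\ri\dom\sigma_C$ together with a density argument. I expect this Mosco convergence with a shifting initial point, and noncompact $\dom\sigma_C$, to be the main obstacle. Without~\ref{it:main-2}, $\sigma_C$ evaluated at perturbed points may be $+\infty$. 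We also need $0\in\dom\varphi$ with energies $\varphi_\varepsilon(0)\to\varphi(0)$, which follows from the pointwise convergence at $\overline{w}_0\in\dom\sigma_C$ plus the same translation argument.

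\textbf{Convergence statements.} Given Mosco convergence and convergent initial data with convergent energies, the evolutionary convergence theorem of~\cite{attouch1977convergence} (a Brezis-type result) gives $u^\varepsilon\to\overline{u}$ uniformly on $[0,S]$. It also gives convergence of the velocities $\dot u^\varepsilon\to\dot{\overline{u}}$ in $L^2([0,S], s\diff s)$, where the weight $s$ reflects the regularizing estimate $s\|\dot u(s)\|$ for initial data that are not in $\dom\partial\varphi$ uniformly. Since $\overline{w}^\varepsilon-\overline{w}^\varepsilon_0 = M^{1/2}u^\varepsilon$ and $\overline{w}^\varepsilon_0\to\overline{w}_0$, we get (a). Since $\nabla\ell(\overline{x}^\varepsilon) = -\frac{\diff}{\diff s}\overline{w}^\varepsilon = -M^{1/2}\dot u^\varepsilon$, we get the first part of (b).

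For the loss values we use the identity $\ell(x) = \frac12\langle \nabla\ell(x), M^\dagger\nabla\ell(x)\rangle + \text{const}$, valid because $q\in\Span M$. This makes $\ell(\overline{x}^\varepsilon)$ a continuous quadratic function of $\nabla\ell(\overline{x}^\varepsilon)$, so $L^2$ convergence of gradients gives $L^1$ convergence of losses. The equality with the minimum over $\partial\sigma_C(\overline{w}(s))$ comes from Theorem~\ref{thm:existence-mirror-flow}.

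For (c), pass to the limit in $\overline{w}^{\varepsilon_k}(s)\in\partial(h/\mu^{\varepsilon_k})(\overline{x}^{\varepsilon_k}(s))$, equivalently $\overline{x}^{\varepsilon_k}(s)\in\partial g_{\varepsilon_k}(\overline{w}^{\varepsilon_k}(s))$. Graph convergence of subdifferentials under epi-convergence (Attouch's theorem) gives $\overline{y}(s)\in\partial\sigma_C(\overline{w}(s))$. Moreover $\ell(\overline{y}(s)) = \lim\ell(\overline{x}^{\varepsilon_k}(s))$ equals $\ell(\overline{x}(s))$ for a.e.~$s$. To see this, extract a further a.e.-convergent subsequence from the $L^1$ convergence in (b). Hence $\overline{y}(s)$ is a minimizer.
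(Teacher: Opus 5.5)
Your architecture is the paper's: rescale to the mirror flow with potential $h/\mu^\varepsilon$, pass to subgradient flows via Proposition~\ref{prop:equivalences-mf-gf}, prove $\varphi_\varepsilon \xrightarrow{\mathsf{M}} \varphi$, apply Theorem~\ref{thm:gamma_convergence}, and read off (a)--(c) as the paper does.

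Your handling of the $\varepsilon$-dependence of $g_\varepsilon=(h/\mu^\varepsilon)^*$ is sound, and lighter than the paper's detour through the conjugates $(\Psi^\varepsilon)^*$. The liminf follows from the monotone lower bounds. The pointwise bound $g_\varepsilon\le\sigma_C+h^*(0)/\mu^\varepsilon$ means any recovery sequence for $u\mapsto\sigma_C(\overline{w}_0^\varepsilon+M^{1/2}u)$ is also one for $\varphi_\varepsilon$. It is this pointwise bound, not epi-convergence of $g_\varepsilon$ alone, that does the work, since epi-convergence is not stable under composition with a non-surjective $M^{1/2}$.

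\textbf{The gap: the recovery sequence with a moving base point.} The reduction above leaves exactly the step you flagged and did not carry out, which is the content of Lemma~\ref{lem:mosco_recessions_rec}. Your suggested correction fails. With $u_\varepsilon=u-(M^\dagger)^{1/2}(\overline{w}_0^\varepsilon-\overline{w}_0)$ you get $\overline{w}_0^\varepsilon+M^{1/2}u_\varepsilon=\overline{w}_0+M^{1/2}u+(\I-MM^\dagger)(\overline{w}_0^\varepsilon-\overline{w}_0)$. Only the $\Span M$ component of the perturbation is cancelled. Assumption (ii) puts the residual in $\aff\dom\sigma_C$, not in $\dom\sigma_C$. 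So when $\overline{w}_0+M^{1/2}u$ lies on the relative boundary of $\dom\sigma_C$, the residual can push it out and produce $+\infty$.

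Your ``density'' alternative is unspecified, and it never uses the qualification $\overline{w}_0\in\ri\dom\sigma_C+\Span M$ from (i). Without that qualification the limsup genuinely fails. Take $C=\R^2_{\geq0}$, $M=e_1e_1^\top$, $\overline{w}_0=(-1,0)$, and $\overline{w}_0^\varepsilon$ with a small positive second coordinate. Then $\sigma_C(\overline{w}_0^\varepsilon+M^{1/2}u)=+\infty$ for every $u$, while $\varphi$ is finite near $0$.

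The missing idea is the paper's interpolation toward a relative-interior point:
\begin{itemize}
\item Pick $\overline z$ with $\overline u=\overline{w}_0+M^{1/2}\overline z\in\ri\dom\sigma_C$; this is where (i) is used.
\item Set $z_\varepsilon=(1-t_\varepsilon)z+t_\varepsilon\overline z$ with $t_\varepsilon=\|\overline{w}_0^\varepsilon-\overline{w}_0\|^{1/2}$.
\item Write $\overline{w}_0^\varepsilon+M^{1/2}z_\varepsilon=(1-t_\varepsilon)(\overline{w}_0+M^{1/2}z)+t_\varepsilon\widetilde u_\varepsilon$, where $\widetilde u_\varepsilon=\overline u+t_\varepsilon^{-1}(\overline{w}_0^\varepsilon-\overline{w}_0)\to\overline u$.
\item By (ii), $\widetilde u_\varepsilon\in\ri\dom\sigma_C$ for small $\varepsilon$. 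Convexity, together with continuity of $\sigma_C$ on $\ri\dom\sigma_C$, then gives the limsup inequality.
\end{itemize}

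\textbf{A false (but unneeded) claim about energies.} You assert that $\varphi_\varepsilon(0)\to\varphi(0)$ follows from pointwise convergence plus the same translation argument. This is false. On the orthant with $M=\I_2$, take $w_0^\varepsilon=(L^{1/2},-L)$ with $L=\log(1/\varepsilon)$. Assumptions (i)--(ii) hold with $\overline{w}_0=(0,-1)$. Yet $\varphi_\varepsilon(0)=h^*(w_0^\varepsilon)/\mu^\varepsilon\geq e^{\sqrt L}/\mu^\varepsilon\to+\infty$, whereas $\varphi(0)=0$.

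Fortunately this claim is not needed. Theorem~\ref{thm:gamma_convergence} only requires $0\in\dom\partial\varphi_\varepsilon$ and $0\in\dom\partial\varphi$. The latter follows from $\overline{w}_0\in\dom\partial\sigma_C$ and Lemma~\ref{lem:subdiff}. The weight $s$ in the $L^2$ convergence is exactly what compensates for the lack of energy convergence, so you should drop this step rather than try to prove it.
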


As before, $\overline{w}_0 \in \ri \dom \sigma_C$ is a sufficient condition for requirement~\ref{it:main-1} of the theorem.

In the case where $M$ is invertible, requirement~\ref{it:main-1} reduces to $\overline{w}_0 \in \dom \partial \sigma_C$. Moreover, in this case, the primal variable $\overline{x}(s)$ is uniquely defined and $\overline{x}^\varepsilon$ converges to $\overline{x}$ in $L^2([0,S],\R^d, s\diff s)$ by statement~\ref{it:main-statement-2}.

Note that the $L^2$ and $L^1$ convergence statements are with respect to the measure $s \diff s$, that is, the measure with density $s$ with respect to the Lebesgue measure. This technicality is inherited from the theory of convergence of subgradient flows~\cite{attouch1977convergence}; unweighted convergence should not be expected. 

Requirement~\ref{it:main-2} is automatic whenever $\aff \dom \sigma_C = \R^d$ (in particular, it holds in all three examples of Section~\ref{sec:examples}); it is best viewed as a non-degeneracy assumption on the approach of $\overline{w}_0^\varepsilon$ to $\overline{w}_0$.

\begin{example}[logistic differential equation, continued]
	\label{ex:logistic-5}
	Theorem~\ref{thm:main} shows that the solution of the logistic differential equation, when initialized at $x^\varepsilon_0 = \varepsilon$ and rescaled in time as $\overline{x}^\varepsilon(s) = x^\varepsilon(\mu^\varepsilon s)$ with $\mu^\varepsilon = \log \frac{1}{\varepsilon}$, converges to the primal solution of the mirror flow with mirror potential $h = \iota_{\R_{\geq 0}}$ given in Eq.~\eqref{eq:logistic-primal}. In this example, the convergence can also be derived by elementary means. Recall that $x^\varepsilon(t) = f(t - t_0^\varepsilon)$ where $f$ is the logistic function and $t_0^\varepsilon$ satisfies $f(-t_0^\varepsilon) = \varepsilon$. This implies $t_0^\varepsilon \xrightarrow[\varepsilon\to 0]{} +\infty$, and thus $\varepsilon = f(-t_0^\varepsilon) = \frac{1}{1+e^{t_0^\varepsilon}} = e^{-t_0^\varepsilon}(1+o(1))$. Taking logarithms, we get $t_0^\varepsilon = \mu^\varepsilon + o(1)$. We thus obtain
	\begin{equation*}
		\overline{x}^\varepsilon(s) = x^\varepsilon(\mu^\varepsilon s) = f(\mu^\varepsilon s - t_0^\varepsilon) = f((s-1) \mu^\varepsilon + o(1)) \xrightarrow[\varepsilon\to 0]{} \begin{cases}
		0 & \text{if } s < 1 \, , \\
		1 & \text{if } s > 1 \, .
	\end{cases}
	\end{equation*} 
	In other words, the rescaled primal variable $\overline{x}^\varepsilon$ converges a.e.~pointwise to the primal solution of the limiting mirror flow, provided in Eq.~\eqref{eq:logistic-primal}. A similar computation shows the convergence of the rescaled dual variable.
\end{example}

\section{Applications and simulations}
\label{sec:examples}

\subsection{Optimization on the non-negative orthant \texorpdfstring{$\R_{\geq 0}^d$}{}}
\label{subsec:diagonal}

\subsubsection{Limiting dynamics}
 
We consider the minimization of the loss $\ell(x) = \frac{1}{2} \langle x, M x \rangle - \langle q, x \rangle + c$, with $M$ positive semidefinite and $q \in \Span M$, via the mirror flow with entropic mirror potential $h(x) = \sum_{i=1}^{d} (x_i \log x_i - x_i)$ and domain $\dom h = \R_{\geq 0}^d$. We also have $\nabla h(x) = \log x$, with inverse $\nabla h^*(w) = e^w$, and Fenchel conjugate $h^*(w) = \sum_{k = 1}^d e^{w_k}$, with $\dom h^* = \R^d$.
 
\paragraph{Support function.} We have $C = \dom h = \R_{\geq 0}^d$, $\sigma_{C} = \sigma_{\R_{\geq 0}^d} = \iota_{\R_{\leq 0}^d}$, and $\dom \sigma_C = \dom \partial \sigma_C = \R_{\leq 0}^d$. Requirement~\ref{it:main-1} of Theorem~\ref{thm:main} is satisfied if $\overline{w}_0 \in \R_{< 0}^d$. Moreover, $\aff \dom \sigma_C = \R^d$, so requirement~\ref{it:main-2} of Theorem~\ref{thm:main} is automatically satisfied.
\paragraph{The limiting dynamics.} We initialize the mirror flow at $x_0^\varepsilon = \varepsilon \mathbf{1}$, that is, $w_0^\varepsilon = (\log \varepsilon)\mathbf{1}$ where $\mathbf{1} = (1, \ldots, 1) \in \R^d$. Thus $\mu^\varepsilon = \|w_0^\varepsilon\| = \sqrt{d}\log(1/\varepsilon) \to +\infty$ and $\overline{w}_0 = \lim_{\varepsilon \to 0} w_0^\varepsilon / \mu^\varepsilon = -\mathbf{1}/\sqrt{d}$. Hence requirements~\ref{it:main-1} and~\ref{it:main-2} are satisfied, and applying Theorem~\ref{thm:main}, the limiting mirror flow reads:
\begin{align}
	\label{eq:diagonal-limit-dual}
	&\frac{\diff \overline{w}}{\diff s} (s) = - \nabla \ell(\overline{x}(s))  \, , &&\overline{x}(s) \in \partial \iota_{\R_{\leq 0}^d}(\overline{w}(s)) \, .
\end{align}
\paragraph{Primal dynamics and sparsity.} For $w \leq 0$, the set $\partial \iota_{\R_{\leq 0}^d}(w)$ is the convex set of vectors complementary to $w$ in the following sense:
\begin{equation*}
	\partial \iota_{\R_{\leq 0}^d}(w) = \left\{ x \in \R_{\geq 0}^d \,\middle|\, \langle x , w \rangle = 0 \right\} \, .
\end{equation*}
Therefore, the limiting primal trajectory satisfies $\overline{x}_i(s) = 0$ whenever $\overline{w}_i(s) < 0$, and $\overline{x}_i(s) \geq 0$ whenever $\overline{w}_i(s) = 0$. Define the \emph{active set} $I(w) = \{i \mid w_i = 0\}$; then $\overline{x}(s)$ is the minimizer of~$\ell$ restricted to coordinates in $I(\overline{w}(s))$ with the constraint $x \geq 0$:
\begin{equation*}
	\overline{x}(s) \in \Argmin \left\{  \ell(y) \,\middle|\, y \geq 0 , \, \supp(y) \subset I(\overline{w}(s)) \right\} \, .
\end{equation*}
The trajectory $\overline{w}(s)$ is piecewise linear, and the active set $I(\overline{w}(s))$ is piecewise constant, with changes at the times when some coordinate $\overline{w}_i(s)$ hits $0$. Consequently, the limiting primal trajectory $\overline{x}(s)$ can be chosen piecewise constant; in each phase it is the solution of a constrained quadratic optimization problem.

\paragraph{Implementation.} We simulate the original mirror flow for two values $\varepsilon \in \{10^{-20}, 10^{-100}\}$ and compare it to the limiting dynamics in Eq.~\eqref{eq:diagonal-limit-dual}. The result is shown in Figure~\ref{fig:diagonal_simulation}. In this simulation, $M \in \R^{d \times d}$, with $d = 8$, is generated randomly as a Wishart matrix with $n = 6$ degrees of freedom and scale matrix $\I_d$, that is $M = \frac{1}{n} \sum_{i = 1}^n a_i a_i^\top$, where the $a_i$ are i.i.d.\ vectors in $\R^d$ distributed according to $\mathcal{N}(0,\I_d)$. The vector $q \in \Span M$ is generated as $q = M z$, where $z$ is a standard Gaussian vector. The visualization of the dual trajectory of $\overline{w}$ is given in Appendix~\ref{app:diagonal_simulation}. Further implementation details are given in Appendix~\ref{app:implementation}.

\begin{figure}[ht]
	\centering
	\includegraphics[width=0.95\textwidth]{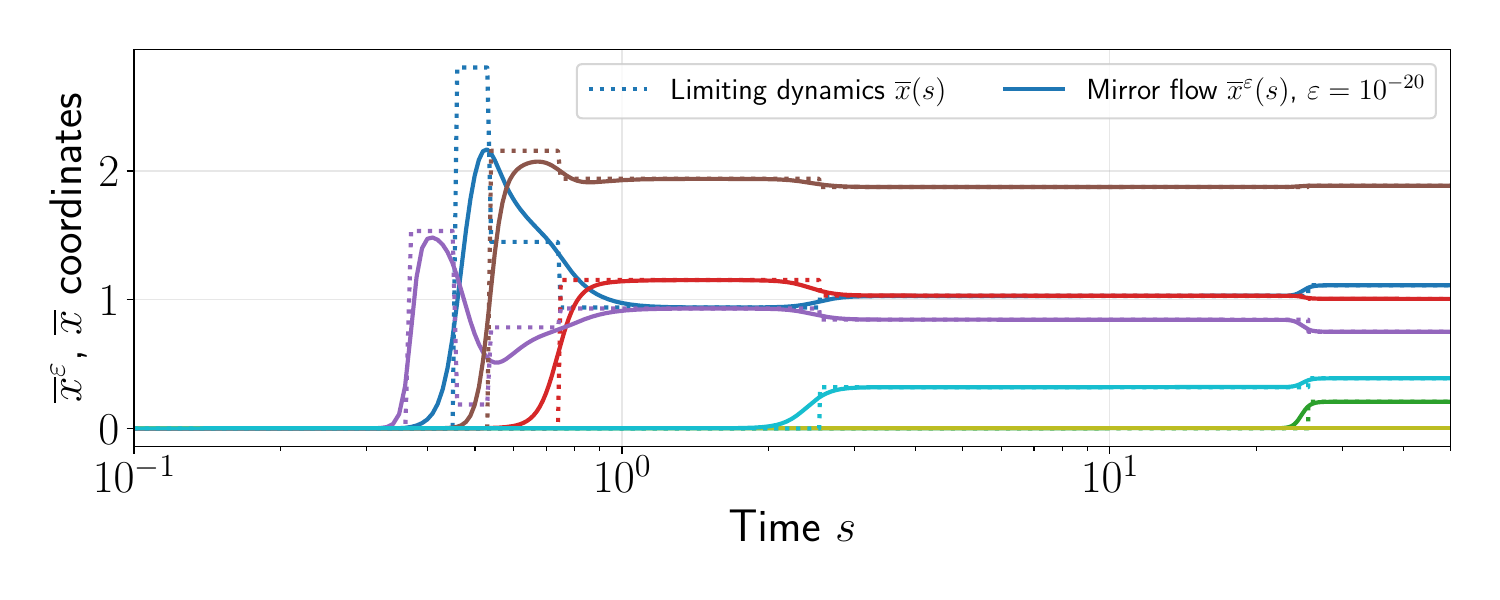}
	\includegraphics[width=0.95\textwidth]{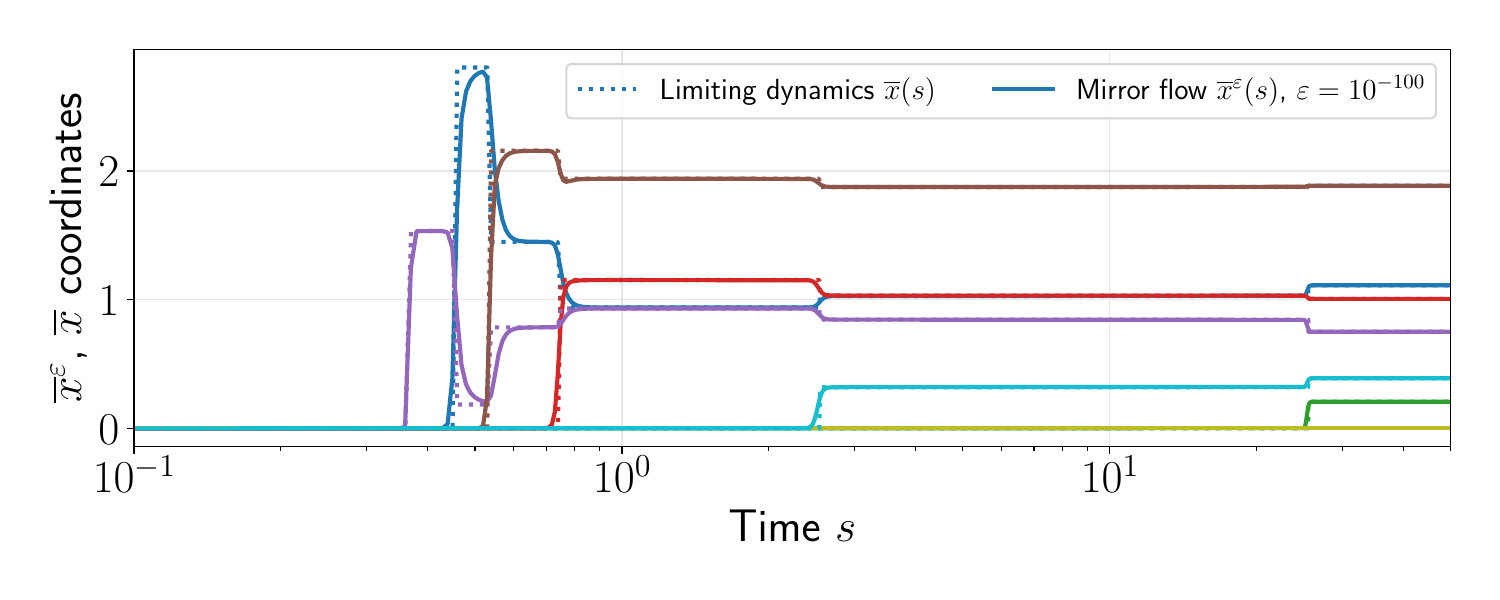}
	\includegraphics[width=0.95\textwidth]{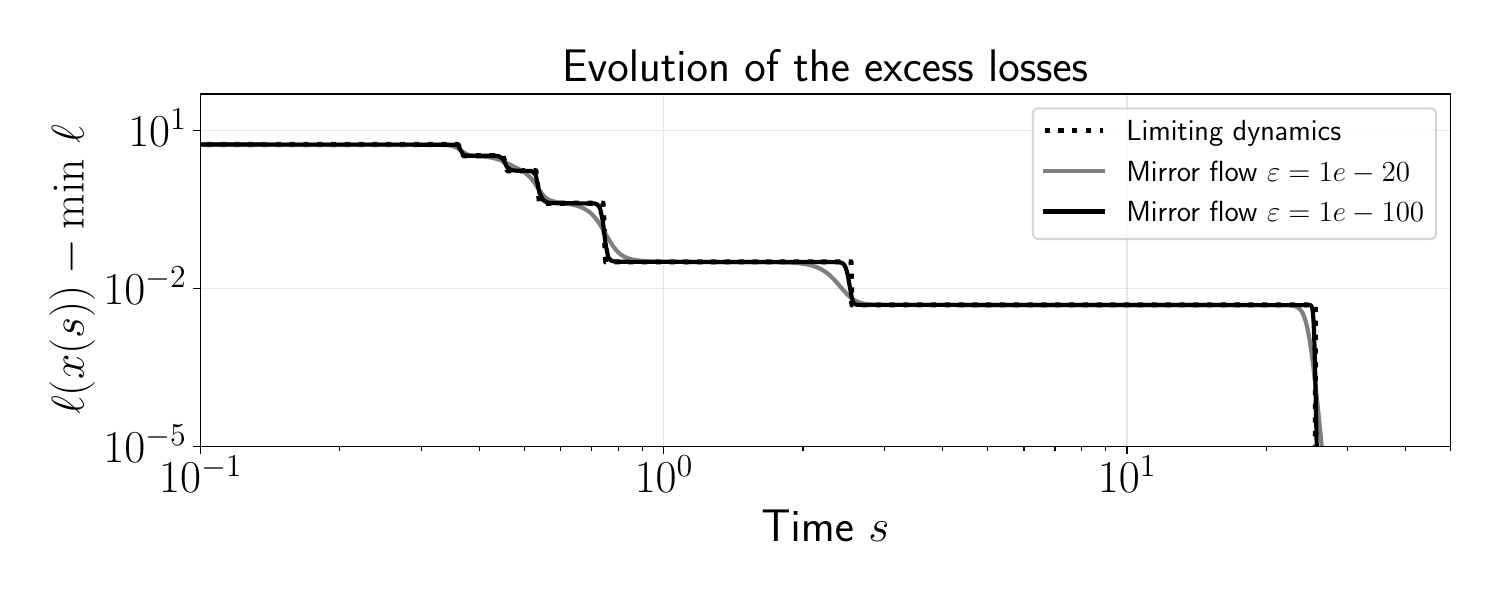}
	\caption{\textbf{Non-negative orthant.} Simulation of the mirror flow for two small values of $\varepsilon$ and comparison with the limiting dynamics. The top two plots show the coordinate trajectories of $x$ for the mirror flow (solid line) and for the limiting dynamics (dotted line). The top plot corresponds to $\varepsilon = 10^{-20}$ and the middle plot to $\varepsilon = 10^{-100}$. The bottom plot displays the excess loss, i.e. the loss minus its minimum over $\R^d_{\geq 0}$. The mirror flow and the limiting dynamics agree well. The trajectories exhibit complex sparse activation patterns, and the loss is non-increasing and piecewise constant, illustrating the \textit{incremental learning} of the iterates.}
	\label{fig:diagonal_simulation}
\end{figure}

\subsubsection{Connection to diagonal linear networks}
\label{subsec:diagonal_nn}

Mirror flows on the non-negative orthant with the entropic mirror potential arise in the study of diagonal linear networks (DLNs)~\cite{li2022implicit}. This connection has been central to analyses of incremental learning in DLNs~\cite{berthier2023incremental,pesme2023saddle}. We recall it here and interpret our results in this context.

A two-layer DLN implements a linear relation $b = \langle x , a \rangle$ between its input $a \in \R^d$ and its output $b \in \R$. However, the regression parameter $x \in \R^d$ is itself parametrized as the componentwise product $x = v \circ u$ of the outer-layer weights $v \in \R^d$ and the inner-layer diagonal weights $u \in \R^d$. When minimizing the quadratic loss $\ell(x)$, the gradient-flow dynamics are run in terms of the weights $u$ and $v$, rather than in terms of $x$ itself:
\begin{align}
	\label{eq:dln-asymmetric}
	&\frac{\diff u}{\diff t} = - \nabla_u\left(\ell(v \circ u)\right) \, , &&\frac{\diff v}{\diff t} = - \nabla_v\left(\ell(v \circ u)\right) \, .
\end{align}
The induced dynamics on $x$ can be understood through a mirror flow, and the results of this paper describe the incremental learning dynamics in the limit of small initialization. 

\paragraph{Symmetric case.} We start with the symmetric case $u = v$. This can be interpreted as a weight-tied DLN in which both layers share the same weight; more importantly for us, it is a convenient theoretical intermediate setting in which the main ideas can be derived before turning to the asymmetric case~\cite{berthier2025diagonal}.

In this case, Eqs.~\eqref{eq:dln-asymmetric} become
\begin{align}
	\label{eq:dln-symmetric}
	&\frac{\diff u}{\diff t} = - \nabla_u\left(\ell(u \circ u)\right) = -2 u \circ \nabla \ell(u \circ u) \, . 
\end{align}
The induced dynamics on $x = u \circ u$ are 
\begin{align}
	\label{eq:dln-symmetric-x}
	&\frac{\diff x}{\diff t} = 2 u \circ \frac{\diff u}{\diff t} = -4 u \circ u \circ \nabla \ell(u \circ u) = -4 x \circ \nabla \ell(x) \, ,
\end{align}
	that is,
\begin{align*}
	&\frac{\diff w}{\diff t} = - \nabla \ell(x) \, , \qquad w = \nabla h(x) \, , \\
	&\text{where } h(x) = \frac{1}{4} \sum_{i=1}^{d} (x_i \log x_i - x_i) \, , \qquad x \geq 0 \, .
\end{align*}
In words, $x$ is the primal variable of a mirror flow with the entropic mirror potential (up to a multiplicative constant).

Incremental learning occurs when the primal variable $x$ of the mirror flow is initialized close to the boundary of $C = \dom h = \R_{\geq 0}^d$. This corresponds to (at least some of) the weights~$u$ being initialized at a small value. Let $\varepsilon > 0$ be a small parameter and $u^\varepsilon$ denote the solution of the DLN dynamics \eqref{eq:dln-symmetric} initialized from $u^\varepsilon_0 \in \R^d$. Denote $x^\varepsilon = u^\varepsilon \circ u^\varepsilon$ the solution of the corresponding mirror flow \eqref{eq:dln-symmetric-x}, initialized from $x^\varepsilon_0 = u^\varepsilon_0 \circ u^\varepsilon_0$. The application of Theorem~\ref{thm:main} readily gives the following result.

\begin{thm}
	Assume that, for all $i \in \{1, \dots, d\}$, $u^\varepsilon_{0,i} \neq 0$, and 
	\begin{equation*}
		\mu^\varepsilon := \frac{1}{2} \left(\sum_{i=1}^{d} \log^2 \vert u^\varepsilon_{0,i} \vert\right)^{1/2} \xrightarrow[\varepsilon\to 0]{} +\infty \, .
	\end{equation*}
	Denote $\overline{x}^\varepsilon(s) = x^\varepsilon(\mu^\varepsilon s)$, $s \geq 0$. We assume that $\frac{1}{2\mu^\varepsilon} \log \vert u^\varepsilon_{0} \vert$ converges to some limit $\overline{w}_0 \in \R_{\leq 0}^d \cap \left(\R^d_{<0} + \Span M\right)$. Under this assumption, there exists a unique Lipschitz solution $\overline{w}$ of the mirror flow with mirror potential $\iota_{\R_{\geq 0}^d}$ and loss $\ell$, initialized at $\overline{w}_0$. Let $\overline{x}$ be an associated primal variable. Let $S > 0$. Then:
	\begin{enumerate}[label=(\alph*)]
		\item\label{it:dln-1} We have the convergences
		\begin{align*}
			\nabla \ell(\overline{x}^\varepsilon(s))
			&\xrightarrow[\varepsilon\to 0]{}
			\nabla \ell(\overline{x}(s))
			&&\text{in } L^2([0,S],\R^d, s\diff s),\\
			\ell(\overline{x}^\varepsilon(s))
			&\xrightarrow[\varepsilon\to 0]{}
			\ell(\overline{x}(s))
			= \min_{y \geq 0\,,\, \langle \overline{w}(s), y \rangle = 0} \ell(y)
			&&\text{in } L^1([0,S],\R, s\diff s).
		\end{align*}
		\item\label{it:dln-2} Let $\overline{y}:[0,S]\to\R^d$ be an accumulation point of $\overline{x}^\varepsilon$ in the sense of essential pointwise convergence. Then, for a.e.~$s \in [0,S]$, $\overline{y}(s) \in \underset{y \geq 0\,,\,  \langle \overline{w}(s), y \rangle = 0}{\Argmin} \ell(y)$.
	\end{enumerate}
\end{thm}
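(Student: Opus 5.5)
This result is a direct specialization of Theorem~\ref{thm:main} to the mirror potential $h(x) = \frac14 \sum_i (x_i\log x_i - x_i)$ on $C = \R^d_{\geq 0}$. The proof therefore reduces to three tasks: identify the dual variable, check the hypotheses of Theorem~\ref{thm:main}, and translate the hypothesis set $\partial\sigma_C(\overline{w}(s))$ into the stated constraint set.

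\emph{Dual variable and normalization.} Since $\nabla h(x) = \frac14 \log x$, the dual initialization is
\[
w^\varepsilon_0 = \tfrac14 \log(u^\varepsilon_0\circ u^\varepsilon_0) = \tfrac12 \log|u^\varepsilon_0| .
\]
This is well defined because every $u^\varepsilon_{0,i}\neq 0$. Its norm is $\frac12(\sum_i \log^2|u^\varepsilon_{0,i}|)^{1/2}$, which is exactly the $\mu^\varepsilon$ of the statement, so the time rescaling matches the one used in Theorem~\ref{thm:main}. Moreover $h^*(w) = \frac14\sum_i e^{4w_i}$ has full domain, so $\dom h^* = \R^d$ as the limiting setting requires. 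For each $\varepsilon$, Theorem~\ref{thm:existence-mirror-flow} gives a unique Lipschitz mirror flow. Along it, $x^\varepsilon = \nabla h^*(w^\varepsilon) = e^{4w^\varepsilon}$ is smooth, and it coincides with $u^\varepsilon\circ u^\varepsilon$ by the computation \eqref{eq:dln-symmetric-x}. To make that identification rigorous, I would argue that $u^\varepsilon$ never vanishes coordinatewise (since $\dot u_i = -2u_i(\cdot)$ is linear in $u_i$), so $x^\varepsilon>0$ and $w^\varepsilon = \frac14\log x^\varepsilon$ is differentiable with the right derivative. Uniqueness then identifies the two flows.

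\emph{Hypotheses.} The normalized initialization is $\overline{w}^\varepsilon_0 = \frac{1}{2\mu^\varepsilon}\log|u^\varepsilon_0|$, which converges to $\overline{w}_0$ by assumption. We have $\sigma_C = \iota_{\R^d_{\leq 0}}$, so $\dom\partial\sigma_C = \dom\sigma_C = \R^d_{\leq 0}$ and $\ri\dom\sigma_C = \R^d_{<0}$. Hence requirement~\ref{it:main-1} is exactly the assumption $\overline{w}_0\in\R^d_{\leq 0}\cap(\R^d_{<0}+\Span M)$. Since $\aff\dom\sigma_C = \R^d$, requirement~\ref{it:main-2} holds trivially.

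\emph{Conclusion.} Theorem~\ref{thm:main} then gives existence and uniqueness of the limiting Lipschitz dual $\overline{w}$, together with statements (b) and (c). It remains to identify the hypothesis set. For $w\leq 0$,
\[
\partial\iota_{\R^d_{\leq 0}}(w) = N_{\R^d_{\leq 0}}(w) = \{y\geq 0 \mid \langle w,y\rangle = 0\}.
\]
This follows from the normal-cone characterization: $y$ lies in the set iff $\langle y, z-w\rangle\leq 0$ for all $z\leq 0$, which forces $y\geq 0$ and $\langle y,w\rangle\geq 0$; combined with $y\geq0,w\le0$ this gives $\langle y,w\rangle = 0$. Since $\overline{w}(s)\in\dom\partial\sigma_C$ for all $s$, substituting this set into Theorem~\ref{thm:main} yields items \ref{it:dln-1} and \ref{it:dln-2}.

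The only potentially delicate step is the identification of the DLN trajectory with the mirror-flow solution, specifically its global existence and positivity. The ODE $\dot u = -2u\circ(Mu\circ u - q)$ is locally Lipschitz, and $\ell(u\circ u)$ is nonincreasing along it. Boundedness of $u$ then follows because along the mirror flow $h^*$ is controlled; alternatively one can define $u := \operatorname{sign}(u_0)\sqrt{x^\varepsilon}$ directly from the global mirror-flow solution and verify that it solves \eqref{eq:dln-symmetric}. I would use the latter route, which sidesteps blow-up concerns.
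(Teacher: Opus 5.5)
Your proposal is correct and follows the paper's route. The paper only remarks that applying Theorem~\ref{thm:main} to $h(x)=\frac14\sum_i(x_i\log x_i-x_i)$ ``readily gives'' the result, and your checks are exactly the verifications it leaves implicit: $w_0^\varepsilon=\frac12\log|u_0^\varepsilon|$, $\|w_0^\varepsilon\|=\mu^\varepsilon$, requirements~\ref{it:main-1} and~\ref{it:main-2}, and $\partial\iota_{\R^d_{\leq 0}}(w)=\{y\geq 0 \mid \langle w,y\rangle=0\}$. Your extra step of setting $u=\mathrm{sign}(u_0^\varepsilon)\,e^{2w^\varepsilon}$ from the global mirror-flow solution and invoking uniqueness for the locally Lipschitz DLN ODE is a sound way to justify $x^\varepsilon=u^\varepsilon\circ u^\varepsilon$, which the paper takes for granted.
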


To illustrate the results, one can choose, for instance, $u^\varepsilon_0 = \sqrt{\varepsilon} \mathbf{1}$, in which case we obtain $\overline{w}_0 = - \frac{1}{\sqrt{d}} \mathbf{1} \in \R^d_{<0} \subset \R_{\leq 0}^d \cap \left(\R^d_{<0} + \Span M\right)$. 

A similar description was obtained in~\cite{berthier2023incremental} for a specific class of Hessians~$M$ in the so-called underparameterized setting. Our generalization also covers the overparameterized setting.

\paragraph{The asymmetric case.} We now turn to the case $u \neq v$. Let $u^\varepsilon$ and $v^\varepsilon$ be the solutions of the DLN dynamics \eqref{eq:dln-asymmetric}, initialized from $u^\varepsilon_0$ and $v^\varepsilon_0$, respectively, and let $x^\varepsilon = v^\varepsilon \circ u^\varepsilon$ be the linear parameter. The connection to mirror flows is more involved in this case; we state the result directly and postpone the derivation to the proof.

\begin{thm}
	\label{thm:dln-asym}
	Assume that, for all $i \in \{1, \dots, d\}$, $u^\varepsilon_{0,i} \neq \pm v^\varepsilon_{0,i}$, and
	\begin{equation*}
		\mu^\varepsilon := \left(\sum_{i=1}^d \left[\log^2 \left\vert \frac{u^\varepsilon_{0,i} + v^\varepsilon_{0,i}}{2}\right\vert + \log^2 \left\vert \frac{u^\varepsilon_{0,i} - v^\varepsilon_{0,i}}{2} \right\vert\right]\right)^{1/2} \xrightarrow[\varepsilon\to 0]{} +\infty \, .
	\end{equation*}
	Denote $\overline{x}^\varepsilon(s) = x^\varepsilon(\mu^\varepsilon s)$, $s \geq 0$. We assume that
	\begin{itemize}
		\item $\frac{1}{\mu^\varepsilon} \log \left\vert \frac{u^\varepsilon_{0} + v^\varepsilon_{0}}{2}\right\vert$ converges to some limit $\overline{w}_0^{\pos} \in \R^d_{\leq 0}$,
		\item $\frac{1}{\mu^\varepsilon} \log \left\vert \frac{u^\varepsilon_{0} - v^\varepsilon_{0}}{2}\right\vert$ converges to some limit $\overline{w}_0^{\neg} \in \R^d_{\leq 0}$, and
		\item there exists $a \in \Span M$ such that $\overline{w}_0^{\pos} + a < 0$ and $\overline{w}_0^{\neg} - a < 0$. 
	\end{itemize}
		We write $\overline{w}_0 = \frac{\overline{w}_0^\pos-\overline{w}_0^\neg}{2}$ and $\lambda = -\frac{\overline{w}_0^\pos+\overline{w}_0^\neg}{2}$. The third assumption above implies $\lambda > 0$. Under these assumptions, there exists a unique Lipschitz solution $\overline{w}$ of the mirror flow with mirror potential $h(x) = \sum_{i=1}^{d} \lambda_i \vert x_i \vert$ and loss $\ell$, with initialization $\overline{w}(0) = \overline{w}_0$. Let $\overline{x}$ be an associated primal variable. Let $S > 0$. Then:
	\begin{enumerate}[label=(\alph*)]
		\item We have the convergences
		\begin{align*}
			\nabla \ell(\overline{x}^\varepsilon(s))
			&\xrightarrow[\varepsilon\to 0]{}
			\nabla \ell(\overline{x}(s))
			&&\text{in } L^2([0,S],\R^d, s\diff s),\\
			\ell(\overline{x}^\varepsilon(s))
			&\xrightarrow[\varepsilon\to 0]{}
			\ell(\overline{x}(s))
			&&\text{in } L^1([0,S],\R, s\diff s),
	\end{align*}
	where 
	\begin{equation*}
		\ell(\overline{x}(s)) = \min \left\{\ell(y)\,\middle|\, y\in \R^d \text{ satisfying }\begin{cases}
		\text{if }\overline{w}_i(s) = \lambda_i, & y_i \geq 0 \, , \\
		\text{if }\overline{w}_i(s) \in (-\lambda_i, \lambda_i), & y_i = 0 \, , \\
		\text{if }\overline{w}_i(s) = -\lambda_i, & y_i \leq 0 \, .
		\end{cases}\right\} \, .
	\end{equation*}
	\item Let $\overline{y}:[0,S]\to\R^d$ be an accumulation point of $\overline{x}^\varepsilon$ in the sense of essential pointwise convergence. Then, for a.e.~$s \in [0,S]$,
	\begin{equation*}
		\overline{y}(s) \in \Argmin\left\{\ell(y)\,\middle|\, y\in \R^d \text{ satisfying }\begin{cases}
		\text{if }\overline{w}_i(s) = \lambda_i, & y_i \geq 0 \, , \\
		\text{if }\overline{w}_i(s) \in (-\lambda_i, \lambda_i), & y_i = 0 \, , \\
		\text{if }\overline{w}_i(s) = -\lambda_i, & y_i \leq 0 \, .
		\end{cases}\right\} \, .
	\end{equation*}
	\end{enumerate} 
\end{thm}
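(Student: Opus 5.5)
We lift the asymmetric network to a mirror flow on the non-negative orthant of $\R^{2d}$ with entropic potential, apply Theorem~\ref{thm:main} there, and then project the limit back to $\R^d$.

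\emph{Step 1: the lift.} Write $g = \nabla \ell(v\circ u)$. The dynamics~\eqref{eq:dln-asymmetric} read $\dot u = -v\circ g$ and $\dot v = -u\circ g$. Hence
\begin{equation*}
\tfrac{\diff}{\diff t}(u+v) = -(u+v)\circ g \, , \qquad \tfrac{\diff}{\diff t}(u-v) = (u-v)\circ g \, .
\end{equation*}
Each coordinate of $u\pm v$ therefore keeps the nonvanishing sign it had at initialization. Set $w^{\pos} = \log\vert (u+v)/2\vert$ and $w^{\neg}=\log\vert (u-v)/2\vert$. Then $p = e^{2w^{\pos}}=((u+v)/2)^{\circ 2}$ and $n = e^{2w^{\neg}}$ satisfy $x = v\circ u = p-n$, and
\begin{equation*}
\dot w^{\pos} = -g \, , \qquad \dot w^{\neg}=g \, .
\end{equation*}
Consider $z=(p,n)\in\R^{2d}$, the loss $L(z)=\ell(p-n)$ and the potential $H(z)=\frac12\sum_i (p_i\log p_i - p_i + n_i\log n_i - n_i)$. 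The loss $L$ is convex quadratic with matrix $\mathbf M=\begin{pmatrix} M & -M\\ -M & M\end{pmatrix}$ and linear term $(q,-q)\in\Span\mathbf M$. We have $\nabla H(z)=(w^{\pos},w^{\neg})$ and $-\nabla L(z)=(-g,g)$. So $W=(w^{\pos},w^{\neg})$ is the dual variable of a mirror flow with potential $H$, loss $L$, domain $C=\R^{2d}_{\geq 0}$, and $\dom H^*=\R^{2d}$. Moreover $\Vert W^\varepsilon_0\Vert=\mu^\varepsilon$ exactly as defined in the statement.

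\emph{Step 2: apply Theorem~\ref{thm:main}.} We have $\sigma_C=\iota_{\R^{2d}_{\leq 0}}$, and requirement~\ref{it:main-2} is automatic. The limit $\overline W_0=(\overline w_0^{\pos},\overline w_0^{\neg})$ lies in $\R^{2d}_{\leq 0}=\dom\partial\sigma_C$. Since $\Span\mathbf M=\{(a,-a)\mid a\in\Span M\}$, the third assumption says precisely that $\overline W_0\in\R^{2d}_{<0}+\Span\mathbf M$, so requirement~\ref{it:main-1} holds. We obtain a limiting dual $\overline W=(\overline W^{\pos},\overline W^{\neg})\leq 0$ and a primal $\overline z=(\overline p,\overline n)\geq 0$ with $\langle \overline z,\overline W\rangle=0$. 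We also obtain uniform convergence of the rescaled duals, $L^2(s\diff s)$ convergence of $\nabla L(\overline z^\varepsilon)=(g,-g)$, and $L^1(s\diff s)$ convergence of $L(\overline z^\varepsilon)=\ell(\overline x^\varepsilon)$.

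\emph{Step 3: project back.} Since $\dot{\overline W}{}^{\pos}+\dot{\overline W}{}^{\neg}=0$, the sum $\overline W^{\pos}+\overline W^{\neg}\equiv -2\lambda$ is constant in time. Set $\overline w=(\overline W^{\pos}-\overline W^{\neg})/2$, so that $\overline W^{\pos}=\overline w-\lambda$ and $\overline W^{\neg}=-\overline w-\lambda$.

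\begin{itemize}
\item The constraint $\overline W\leq 0$ is equivalent to $\overline w\in\prod_i[-\lambda_i,\lambda_i]=\dom h^*$ for $h=\sum_i\lambda_i\vert x_i\vert$.
\item Adding the two strict inequalities of the third assumption gives $\lambda>0$.
\item The same inequalities give $\lambda\pm(\overline w_0+a)>0$, i.e.\ $\overline w_0\in\ri\dom h^*+\Span M$. Theorem~\ref{thm:existence-mirror-flow} then provides the unique Lipschitz solution $\overline w$.
\item With $\overline x=\overline p-\overline n$, we get $\dot{\overline w}=-\nabla\ell(\overline x)$. Complementarity gives $\overline p_i=0$ unless $\overline w_i=\lambda_i$, and $\overline n_i=0$ unless $\overline w_i=-\lambda_i$. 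Since $\lambda>0$, both cannot be active at once, so $\overline x\in\partial h^*(\overline w)$ and $\overline w$ is this solution.
\end{itemize}

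For the minimization value, the map $(p,n)\mapsto p-n$ sends $\partial\sigma_C(\overline W(s))$ onto exactly the sign-constrained set displayed in the statement. Hence $\min_{\partial\sigma_C(\overline W)}L$ equals the stated minimum, which gives part (a).

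\emph{Step 4: accumulation points (the main obstacle).} An essential pointwise limit $\overline y$ of $\overline x^\varepsilon=\overline p^\varepsilon-\overline n^\varepsilon$ need not come from a limit of $\overline z^\varepsilon$, so part (c) of Theorem~\ref{thm:main} cannot be applied directly. I would argue as follows.

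\emph{Optimality.} Passing to a further subsequence, the $L^2$ convergence gives $M\overline y(s)-q=M\overline x(s)-q$ for a.e.\ $s$. Thus $\overline y-\overline x\in\ker M$. Since $q\in\Span M$, this gives $\ell(\overline y)=\ell(\overline x)$, which is the optimal value.

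\emph{Feasibility.} This uses the uniform dual convergence. We have $\overline p^\varepsilon(s)=\exp(2\mu^\varepsilon\overline W^{\pos,\varepsilon}(s))$ and similarly for $\overline n^\varepsilon$. If $\overline w_i(s)<\lambda_i$, then $\overline W^{\pos}_i(s)<0$, hence $\overline p^\varepsilon_i(s)\to0$. Likewise, if $\overline w_i(s)>-\lambda_i$, then $\overline n^\varepsilon_i(s)\to 0$. Therefore:
\begin{itemize}
\item if $\overline w_i(s)\in(-\lambda_i,\lambda_i)$, then $\overline y_i(s)=0$;
\item if $\overline w_i(s)=\lambda_i$, then $\overline y_i(s)=\lim\overline p^\varepsilon_i(s)\geq 0$;
\item if $\overline w_i(s)=-\lambda_i$, then symmetrically $\overline y_i(s)\leq 0$.
\end{itemize}
Hence $\overline y(s)$ is feasible and attains the minimum, which proves part (b).
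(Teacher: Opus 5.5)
Your proof is correct. Steps 1--3 coincide with the paper's argument: the same lift to the entropic mirror flow on $\R^{2d}_{\geq 0}$, the same check of requirements (i)--(ii) via $\Span\mathbf M=\{(a,-a)\}$, and the same projection using the constancy of $\overline W^{\pos}+\overline W^{\neg}$. Your explicit check $\lambda\pm(\overline w_0+a)>0$ also establishes the hypothesis $\overline w_0\in\ri\dom h^*+\Span M$ of Theorem~\ref{thm:existence-mirror-flow}, which the paper uses only tacitly when it invokes uniqueness. One small tidy-up: part (a) concerns an arbitrary primal variable $\overline x$, not necessarily $\overline p-\overline n$. You should add that $\nabla\ell(\overline x)=-\dot{\overline w}=\nabla\ell(\overline p-\overline n)$ a.e., hence $\ell(\overline x)=\ell(\overline p-\overline n)$ a.e.

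The genuine difference is in part (b). The paper does lift the accumulation point. The conserved product $x^{\pos}\circ x^{\neg}=e^{2\mu^\varepsilon(\overline w_0^{\pos,\varepsilon}+\overline w_0^{\neg,\varepsilon})}\to 0$ forces $(\overline x^{\pos,\varepsilon_k},\overline x^{\neg,\varepsilon_k})\to(\overline y_+,\overline y_-)$ a.e. Part (c) of Theorem~\ref{thm:main} then applies in $\R^{2d}$ (it rests on graph convergence of the rescaled subdifferentials), and the lifted Argmin is pushed back down. Your remark that the lift ``need not'' converge is therefore too pessimistic here. Your own case analysis shows that one of $\overline p^{\varepsilon_k}_i(s),\overline n^{\varepsilon_k}_i(s)$ tends to $0$ and the other to $\vert\overline y_i(s)\vert$.

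You instead bypass Theorem~\ref{thm:main}(c) entirely. Feasibility comes from the explicit inverse mirror map $\overline p^\varepsilon=\exp(2\mu^\varepsilon\overline W^{\pos,\varepsilon})$ together with uniform dual convergence. Optimality comes from a.e.\ convergence of gradients along a subsequence plus invariance of $\ell$ along $\ker M$ (since $q\in\Span M$). Your route is more elementary and self-contained, but it leans on the closed-form exponential mirror map. The paper's route is modular: it reuses the general theorem and needs only the conservation law specific to this lift.
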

The proof of the theorem is given in Appendix~\ref{sec:proof-dln-asym}. To illustrate the theorem, take for instance $u^\varepsilon_0 = 2 \varepsilon \mathbf{1}$ and $v^\varepsilon_0 = 0$. Then $\mu^\varepsilon = \sqrt{2d}\log(1/\varepsilon)$, $\overline{w}_0^\pos = \overline{w}_0^\neg = -\frac{1}{\sqrt{2d}} \mathbf{1}$, hence $\overline{w}_0 = 0$ and $\lambda = \frac{1}{\sqrt{2d}} \mathbf{1}$. The limiting dynamics correspond to a mirror flow whose mirror potential is the $\ell_1$-norm, up to a multiplicative constant:
\begin{align}
	\label{eq:iss}
	&\frac{\diff \overline{w}}{\diff s} (s) = - \nabla \ell(\overline{x}(s)) \, , &&\overline{w}(s) \in \frac{1}{\sqrt{2d}}\partial\left(\Vert \cdot \Vert_1\right)(\overline{x}(s)) \, .
\end{align}
This corresponds to the results derived in \cite{pesme2023saddle} under a certain ``general position'' assumption that we do not need here.

The dynamics \eqref{eq:iss} correspond to the inverse scale space method~\cite{burger2005nonlinear,burger2007error,burger2013adaptive}. This flow is valued in sparse recovery as an unbiased alternative to the Lasso regularization path~\cite{osher2016sparse}. It is remarkable that diagonal linear networks implement an inverse scale space flow implicitly, through a product parametrization of the regressor and an infinitesimal initialization.

\subsection{Optimization on the positive semidefinite cone \texorpdfstring{$\mathcal{S}^d_+$}{}}
\label{subsec:matrix}

\subsubsection{Limiting dynamics}
 
We now consider the matrix analogue. Let $\mathcal{S}^{d}$ be the space of $d \times d$ real symmetric matrices endowed with the trace inner product $\langle A, B \rangle = \Tr(AB)$. We consider the loss $\ell(X) = \frac{1}{2} \langle X, \mathcal{M}(X) \rangle - \langle Q, X \rangle + c$, where $\mathcal{M} : \mathcal{S}^d \to \mathcal{S}^d$ is a self-adjoint positive semidefinite linear operator and $Q$ is in the image of $\mathcal{M}$. Lifting $\log$ and $\exp$ to spectral functions on $\mathcal{S}^d$ (acting on the eigenvalues), we define the \emph{von Neumann entropic mirror potential} by
\begin{equation*}
	h(X) =
	\begin{cases}
		\Tr(X \log X - X) & \text{if } X \in \mathcal{S}^{d}_+ \, ,\\
		+\infty & \text{otherwise.}
	\end{cases}
\end{equation*}
This mirror potential is convex~\cite[Theorem 4.3.5]{bhatia2009positive}. The domain of $h$ is the cone of positive semidefinite matrices, i.e. $\dom h = \mathcal{S}^{d}_+$. We also have $\nabla h(X) = \log X$, with inverse $\nabla h^*(W) = \exp W$, and Fenchel conjugate $h^*(W) = \Tr(\exp W)$, with $\dom h^* = \mathcal{S}^d$.
 
\paragraph{Support function.} We have $C = \dom h = \mathcal{S}^{d}_+$ and $\sigma_C = \sigma_{\mathcal{S}^{d}_+} = \iota_{\mathcal{S}^{d}_-}$, where $\mathcal{S}^{d}_- = \{V \in \mathcal{S}^d \mid V \preceq 0\}$ is the cone of negative semidefinite matrices. Thus $\dom \sigma_C = \dom \partial \sigma_C = \mathcal{S}^{d}_-$. Requirement~\ref{it:main-1} of Theorem~\ref{thm:main} is satisfied if $\overline{W}_0 \in \{V \in \mathcal{S}^d \mid V \prec 0\}$, the cone of negative definite matrices. Moreover, $\aff \dom \sigma_C = \mathcal{S}^d$, so requirement~\ref{it:main-2} of Theorem~\ref{thm:main} is automatically satisfied.
 
\paragraph{The limiting dynamics.} We initialize the primal variable of the mirror flow at $X^\varepsilon_0 = \varepsilon \I_d$, that is, $W^\varepsilon_0 = (\log \varepsilon)\I_d \in \mathcal{S}^d_-$. Then $\mu^\varepsilon = \|W^\varepsilon_0\| = \sqrt{d}\log(1/\varepsilon) \to +\infty$ and $\overline{W}_0 = \lim_{\varepsilon \to 0} W^\varepsilon_0 / \mu^\varepsilon = -\I_d/\sqrt{d}$. Hence requirements~\ref{it:main-1} and~\ref{it:main-2} are satisfied, and applying Theorem~\ref{thm:main}, the limiting dual flow reads:
\begin{align*}
	&\frac{\diff \overline{W}}{\diff s} (s) = - \nabla \ell(\overline{X}(s)) \, , &&\overline{X}(s) \in \partial \iota_{\mathcal{S}^{d}_-}(\overline{W}(s)) \, .
\end{align*}
 
\paragraph{Primal dynamics and low-rank bias.} For $W \in \mathcal{S}^d_-$, the set $\partial \iota_{\mathcal{S}^{d}_-}(W)$ is the convex set of matrices complementary to $W$ in the following sense:
\begin{equation*}
	\partial \iota_{\mathcal{S}^{d}_-}(W) = \left\{ X \in \mathcal{S}^d_+ \,\middle|\, \Span X \subset \ker W \right\} \, .
\end{equation*}
Denoting $r(s) = \dim \ker \overline{W}(s)$, the limiting primal variable $\overline{X}(s)$ is positive semidefinite with $\rank \overline{X}(s) \leq r(s)$ and minimizes $\ell$ over this convex set, that is,
\begin{equation*}
	\overline{X}(s) \in \Argmin \left\{ \ell(Y) \,\middle|\, Y \succeq 0 , \, \Span Y \subset \ker \overline{W}(s)\right\} \, .
\end{equation*}
The trajectory $\overline{W}(s)$ is no longer piecewise linear. Slow dynamics occur within each fixed-rank set of predictions, and the limiting primal trajectory $\overline{X}(s)$ is not piecewise constant, although it exhibits discontinuities when a new eigenvalue activates. This illustrates the \textit{incremental learning} of the iterates, as the rank of the predictions increases through training.

\paragraph{Implementation.} We simulate the original mirror flow for two values $\varepsilon \in \{10^{-20}, 10^{-100}\}$ and compare it to the limiting dynamics. The result is shown in Figure~\ref{fig:MM_simulation}. Here, $\mathcal{M}$ is a Wishart operator on $\mathcal{S}^d$, i.e. $\mathcal{M} = \frac{1}{n} \sum_{i = 1}^n A_i \otimes A_i$, where the $(A_i)_{i \leq n}$ are i.i.d.\ GOE matrices: symmetric matrices with independent $\mathcal{N}(0,1)$ diagonal entries and $\mathcal{N}(0,1/2)$ off-diagonal entries. The matrix $Q$ is generated as $Q = \mathcal{M}(Z)$, where $Z$ is an independent GOE matrix. We take $n = 6$ and $d = 8$. The visualization of the dual trajectory of $\overline{W}$ is given in Appendix~\ref{app:MM_simulation}. Further implementation details are given in Appendix~\ref{app:implementation}.

\begin{figure}[ht]
	\centering
	\includegraphics[width=0.94\textwidth]{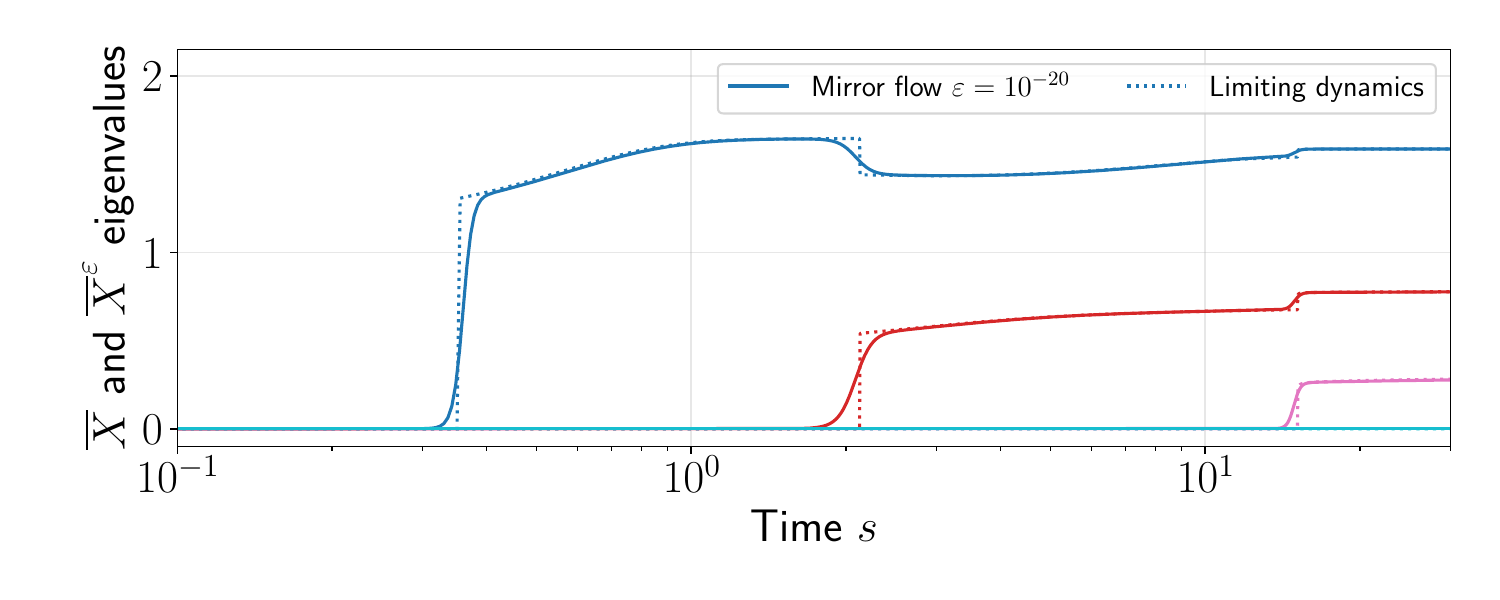}
	\includegraphics[width=0.94\textwidth]{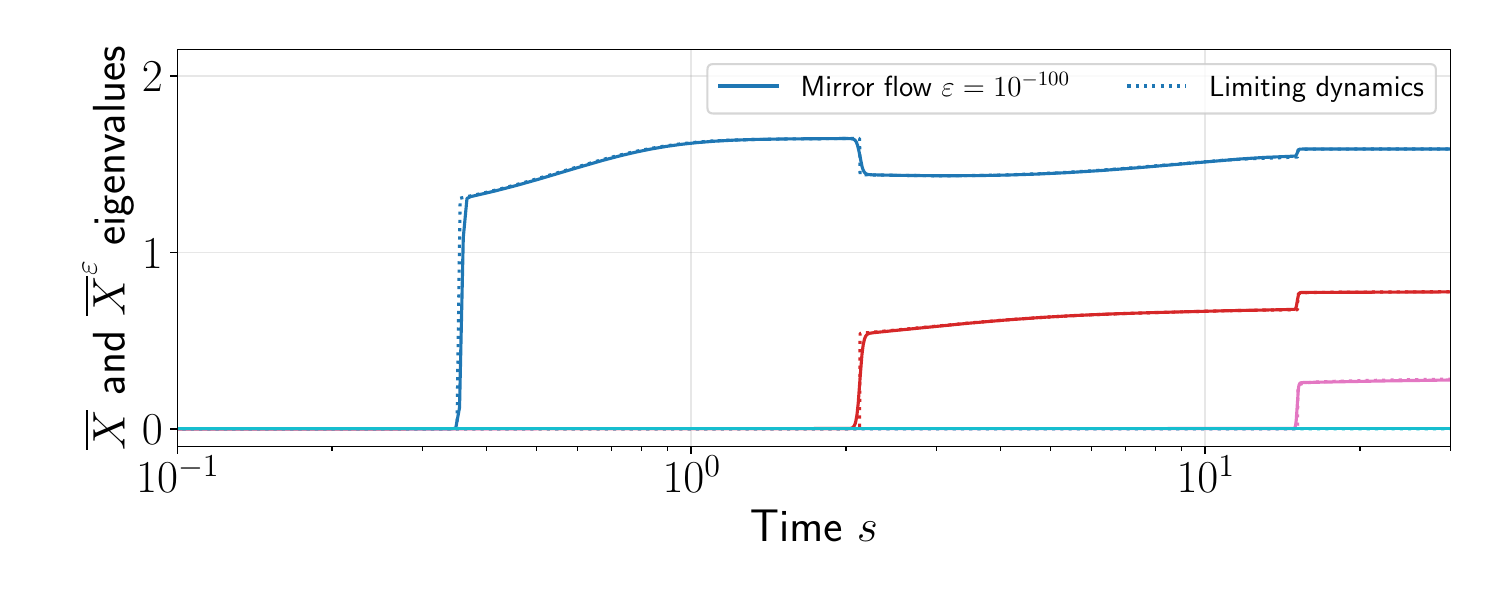}
	\includegraphics[width=0.94\textwidth]{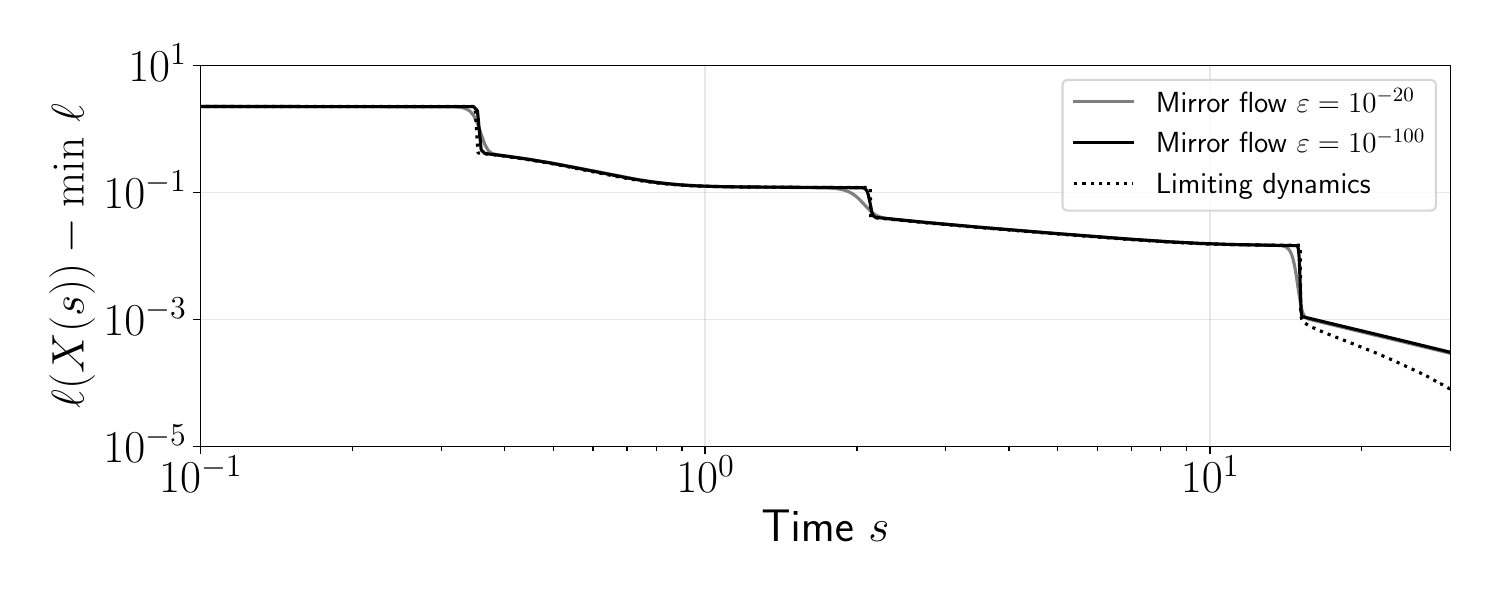}
	\caption{\textbf{Positive semidefinite cone.} Simulation of the mirror flow for two small values of $\varepsilon$ and comparison with the limiting dynamics. The top two plots show the eigenvalue trajectories of $X$ for the mirror flow (solid line) and for the limiting dynamics (dotted line). The top plot corresponds to $\varepsilon = 10^{-20}$ and the middle plot to $\varepsilon = 10^{-100}$. The bottom plot displays the excess loss. The mirror flow and the limiting dynamics agree well. Here, the eigenvalue trajectories are not piecewise constant because slow dynamics occur within each fixed-rank set of predictions. The loss is non-increasing but not piecewise constant, and it displays discontinuities when a new eigenvalue activates. This also illustrates the \textit{incremental learning} of the iterates.}
	\label{fig:MM_simulation}
\end{figure}

\subsubsection{Connection to the matrix factorization problem}

We have seen in Section~\ref{subsec:diagonal_nn} that the mirror flow on the non-negative orthant with the entropic mirror potential corresponds to the parametrization $x = u \circ u$ and a gradient flow in $u$. The matrix analogue does \emph{not} hold: the mirror flow on $\mathcal{S}^d_+$ with the von Neumann entropic mirror potential is \emph{not} equivalent to a gradient flow in $U \in \R^{d \times d}$ for the parametrization $X = U U^\top$. Both dynamics nevertheless share a striking qualitative feature: when initialized infinitesimally close to the boundary of $\mathcal{S}^d_+$, both exhibit \emph{incremental learning} through successive rank increases. We illustrate this empirically in Appendix~\ref{app:MMcomp}, where we run both flows side by side on the same instance of the matrix problem for four random seeds. While the qualitative incremental-rank picture is shared by both flows, the precise quantitative dynamics---the values of the intermediate saddles, the order in which eigenvalues activate, and the pacing of the activations---can differ between the two.

\subsection{Optimization on the probability simplex \texorpdfstring{$\Delta_d$}{}}
\label{subsec:simplex}

To illustrate the generality of our results, we also consider mirror flows on sets with affine constraints. Consider the same quadratic loss $\ell$ as in Section~\ref{subsec:diagonal}, but now on the probability simplex $\Delta_d = \R^d_{\geq 0} \cap \mathcal{S}_1$, where $\mathcal{S}_1 = \{x \in \R^d \mid \sum_{k=1}^d x_k = 1\}$. To reflect the structure of the problem, we optimize $\ell$ via the mirror flow with entropic mirror potential $h(x) = \sum_{k=1}^d (x_k \log x_k - x_k) + \iota_{\mathcal{S}_1}(x)$, with domain $\Delta_d$. For $x \in \ri \Delta_d$, we have $\partial h (x) = \log x + \R \mathbf{1}$, $\dom h^* = \R^d$, and $\nabla h^*(w) = e^w /\|e^w\|_1$; see Lemma~\ref{lem:mirror-potential-simplex} for the conjugate calculation.

\paragraph{Support function.} We have $C = \Delta_d$ and, for all $\overline{w} \in \R^d$, $\sigma_{\Delta_d}(\overline{w}) = \max_i \overline{w}_i$. Moreover, $\dom \sigma_{\Delta_d} = \dom \partial \sigma_{\Delta_d} = \R^d$. Hence requirements~\ref{it:main-1} and~\ref{it:main-2} of Theorem~\ref{thm:main} are automatically satisfied.
\paragraph{The limiting dynamics.} We initialize the mirror flow near the first canonical vector:
\[
	x_0^\varepsilon = \left(1-\varepsilon, \frac{\varepsilon}{d-1}, \ldots, \frac{\varepsilon}{d-1}\right).
\]
Equivalently,
\[
	w_0^\varepsilon =
	\left(\log(1-\varepsilon), \log\frac{\varepsilon}{d-1}, \ldots, \log\frac{\varepsilon}{d-1}\right),
\]
so $\mu^\varepsilon = \|w_0^\varepsilon\| \sim_{\varepsilon \to 0} \sqrt{d-1}\log(1/\varepsilon)$ and
\[
	\overline{w}_0
	= \lim_{\varepsilon \to 0} \frac{w_0^\varepsilon}{\mu^\varepsilon}
	= -\left(0, \frac{1}{\sqrt{d-1}}, \ldots, \frac{1}{\sqrt{d-1}}\right).
\]
Applying Theorem~\ref{thm:main}, the limiting dual flow reads:
\begin{equation*}
	\frac{\diff \overline{w}}{\diff s} (s) = - \nabla \ell(\overline{x}(s)) \, , \qquad \overline{x}(s) \mathrel{\in} \partial \sigma_C(\overline{w}(s)) \, .
\end{equation*}
\paragraph{Primal dynamics and sparsity.} For all $w \in \R^d$, we can define its active set of coordinates as $I(w) = \{k \mid w_k = \max_i w_i\}$. We have the following expression:
\begin{equation*}
	\partial \sigma_{\Delta_d}(w) = \left\{ \sum_{k \in I(w)} \alpha_k e_k \,\middle|\, \sum_{k \in I(w)} \alpha_k = 1,\, \alpha_k \geq 0 \right\} ,
\end{equation*}
which is the convex hull of $(e_k)_{k \in I(w)}$, where the $e_k$ are the canonical vectors. We prove this expression in Lemma~\ref{lem:proof-support-function-simplex}.
Therefore $\overline{x}(s)$ is the minimizer of $\ell$ restricted to coordinates in $I(\overline{w}(s))$ under the simplex constraint:
\begin{equation*}
	\overline{x}(s) \in \Argmin \left\{ \ell(y) \,\middle|\, y \in \Delta_d , \, \supp(y) \subset I(\overline{w}(s)) \right\} \, .
\end{equation*}
The trajectory $\overline{w}(s)$ is piecewise affine, and the active set $I(\overline{w}(s))$ is piecewise constant, with changes at the times when a new coordinate of $\overline{w}(s)$ reaches the maximum value of the vector. Consequently, the limiting primal trajectory $\overline{x}(s)$ can be chosen piecewise constant; in each phase it is the solution of a constrained quadratic optimization problem.

\paragraph{Implementation.} We use the same loss $\ell$ as in Section~\ref{subsec:diagonal}. The result is shown in Figure~\ref{fig:simplex_simulation}. The visualization of the dual trajectory of $\overline{w}$ is given in Appendix~\ref{app:simplex_simulation}. Further implementation details are given in Appendix~\ref{app:implementation}.

\begin{figure}[ht]
	\centering
	\includegraphics[width=0.94\textwidth]{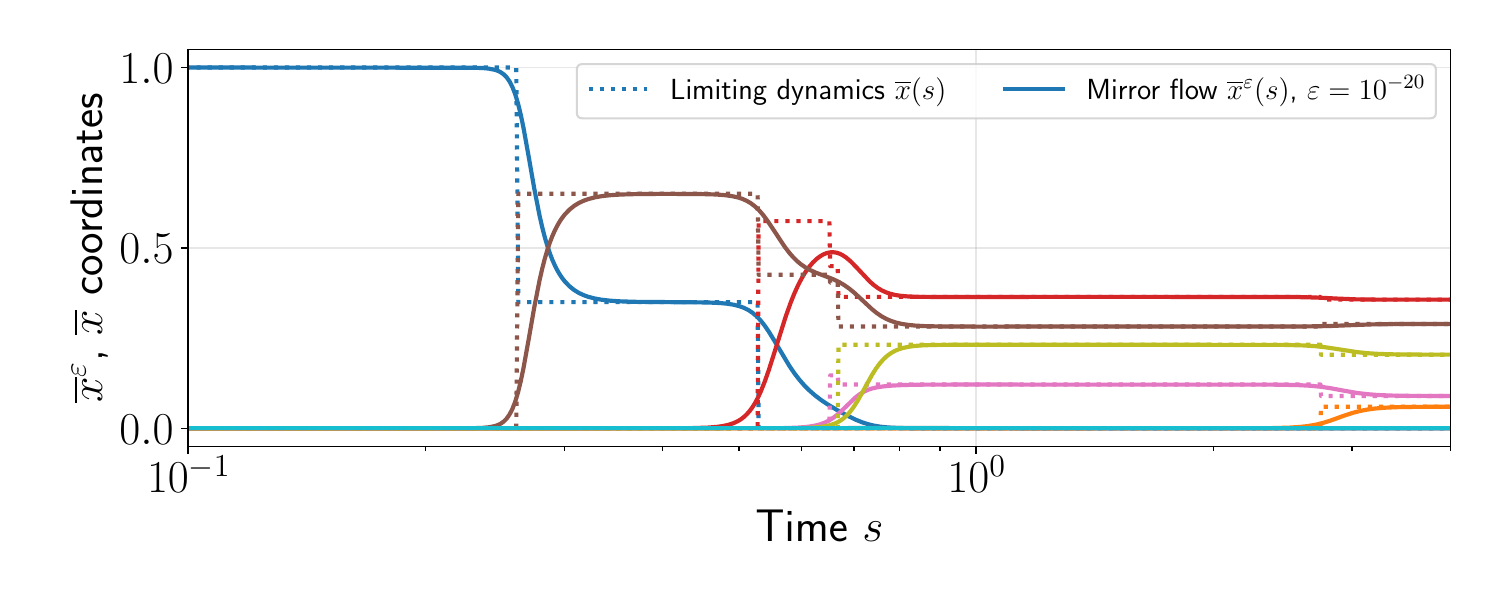}
	\includegraphics[width=0.94\textwidth]{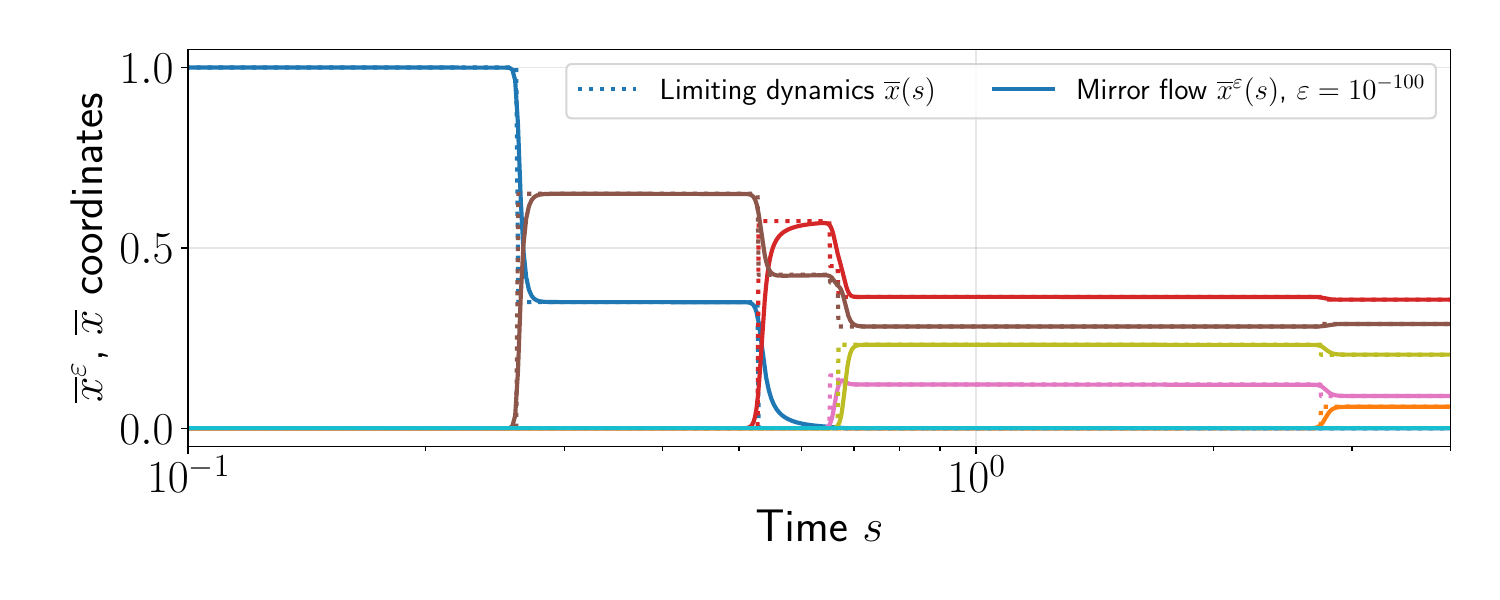}
	\includegraphics[width=0.94\textwidth]{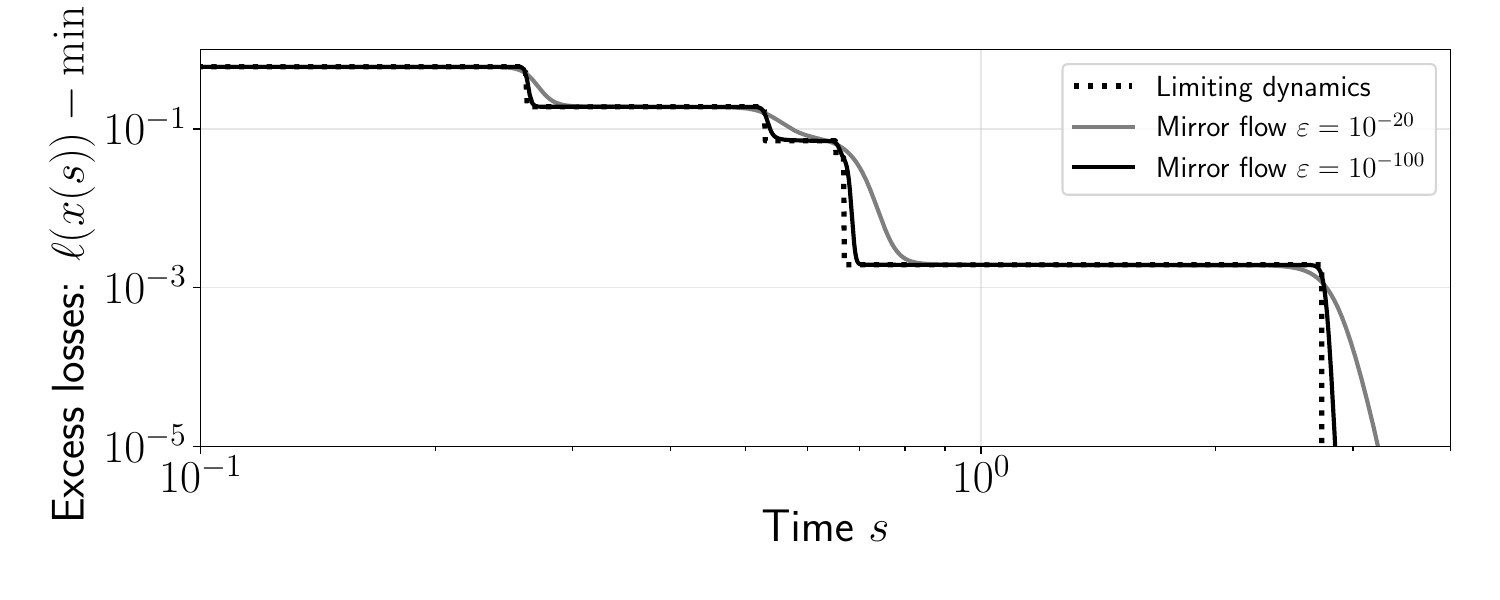}
	\caption{\textbf{Simplex.} Simulation of the mirror flow for two small values of $\varepsilon$ and comparison with the limiting dynamics. The top two plots show the coordinate trajectories of $x$ for the mirror flow (solid line) and for the limiting dynamics (dotted line). The top plot corresponds to $\varepsilon = 10^{-20}$ and the middle plot to $\varepsilon = 10^{-100}$. The bottom plot displays the excess loss, i.e. the loss minus its minimum over $\Delta_d$. The mirror flow and the limiting dynamics agree well. The trajectories exhibit complex sparse activation patterns, and the loss is non-increasing and piecewise constant, illustrating the \textit{incremental learning} of the iterates. The loss converges to the minimum of $\ell$ over the simplex.}
	\label{fig:simplex_simulation}
\end{figure}

\clearpage

\section{Proofs of the results}
\label{sec:proofs}

This section proves the results of Section~\ref{sec:incremental}. Section~\ref{sec:proof-equivalences-mf-gf} proves Proposition~\ref{prop:equivalences-mf-gf}. Section~\ref{sec:proof-existence-mirror-flow} proves Theorem~\ref{thm:existence-mirror-flow} after recalling a result of~\cite{brezis1973ope} on existence and uniqueness for subgradient flows in Section~\ref{subsec:existence}. Section~\ref{sec:proof-main} proves Theorem~\ref{thm:main} after recalling a result of~\cite{attouch1977convergence} on evolutionary convergence of subgradient flows.

\subsection{Proof of Proposition~\ref{prop:equivalences-mf-gf}}
\label{sec:proof-equivalences-mf-gf}

For the sake of future reference, we note the following lemma.

\begin{lem}
	\label{lem:subdiff}
	Let $a \in \ri \dom h^* + \Span M$. The subdifferential of 
	\begin{equation*}
		\varphi(u) = h^*(a + M^{1/2} u) - \langle (M^\dagger)^{1/2} q , u \rangle
	\end{equation*}
	is given by 
	\begin{equation*}
		\partial \varphi(u) = M^{1/2} \partial h^*(a + M^{1/2} u) - (M^\dagger)^{1/2} q \, .
	\end{equation*}
\end{lem}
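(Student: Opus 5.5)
This is a chain rule for a convex function composed with a linear map, plus a linear term. We write $\varphi = g\circ A - \langle b,\cdot\rangle$ with $g(v) = h^*(a+v)$, $A = M^{1/2}$ and $b = (M^\dagger)^{1/2}q$. The linear term only contributes the constant $-b$ to the subdifferential. It therefore suffices to show that $\partial(g\circ A)(u) = A^\top\partial g(Au) = M^{1/2}\partial h^*(a+M^{1/2}u)$, using that $M^{1/2}$ is symmetric.

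The inclusion $\supseteq$ holds without any assumption. If $z\in\partial h^*(a+M^{1/2}u)$, then for every $u'$ we have $h^*(a+M^{1/2}u') \ge h^*(a+M^{1/2}u) + \langle z, M^{1/2}(u'-u)\rangle = h^*(a+M^{1/2}u) + \langle M^{1/2}z, u'-u\rangle$. Hence $M^{1/2}z\in\partial(g\circ A)(u)$.

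The reverse inclusion is where the qualification hypothesis enters, and it is the only real obstacle. I would invoke the standard chain rule for a proper convex function composed with a linear map, in the form of Rockafellar, Convex Analysis, Theorem 23.9. It gives $\partial(g\circ A)(u) = A^\top\partial g(Au)$ for all $u$ as soon as the range of $A$ meets $\ri\dom g$. Here $\dom g = \dom h^* - a$, so $\ri\dom g = \ri\dom h^* - a$. The range of $A = M^{1/2}$ is $\Span M$. The condition therefore reads: there exists $v\in\Span M$ with $a+v\in\ri\dom h^*$, i.e.\ $a\in\ri\dom h^* - \Span M = \ri\dom h^* + \Span M$, because $\Span M$ is a linear subspace. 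This is exactly the hypothesis on $a$.

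Since $h\in\Gamma_0(\R^d)$, its conjugate $h^*$ lies in $\Gamma_0(\R^d)$ and so is proper convex, and Theorem 23.9 applies. (If one prefers Bauschke--Combettes, the analogous statement requires $0\in\operatorname{sri}(\dom g - \operatorname{ran} A)$, which in finite dimensions reduces to the same relative-interior condition.) Finally, adding the linear term uses the elementary identity $\partial(f-\langle b,\cdot\rangle)=\partial f - b$, which requires no qualification. Combining the two inclusions with this identity gives the stated formula.
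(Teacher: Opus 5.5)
Your proof is correct and follows the same route as the paper, which simply cites the chain rule for a convex function composed with a linear map under a relative-interior qualification (Bauschke--Combettes, Thm.~16.47) where you cite Rockafellar, Thm.~23.9. Your explicit check that ``the range of $M^{1/2}$ meets $\ri \dom g$'' is equivalent to $a \in \ri \dom h^* + \Span M$ is the step the paper leaves to the reader.
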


\begin{proof}
	The proof follows from \cite[Thm.~16.47]{bauschke2017convex}. Note that the assumption $a \in \ri \dom h^* + \Span M$ is actually needed for these seemingly straightforward computations. 
\end{proof}

\noindent
We now prove Proposition~\ref{prop:equivalences-mf-gf}. Throughout the proof, we use two identities that follow from the assumption $q \in \Span M$ and from the spectral structure of $M$: $MM^\dagger q = q$ (since $MM^\dagger$ is the orthogonal projection onto $\Span M$) and $(M^\dagger)^{1/2}M = M^{1/2}$.

\noindent
\textbf{\ref{it:mf}$ \Rightarrow $\ref{it:prec-gf}.} From \ref{it:mf}, we have that for a.e.~$t \geq 0$, $w$ is differentiable at $t$ and
	\begin{align*}
		0 &= \frac{\diff w}{\diff t}(t) + \nabla \ell(x(t)) = \frac{\diff w}{\diff t}(t) + M x(t) - q \, .
	\end{align*}
	As $q \in \Span M$, $x(t) \in \partial h^*(w(t))$ and $\partial \Phi(w) = \partial h^*(w) - M^\dagger q$, we obtain 
	\begin{align*}
		0 &= \frac{\diff w}{\diff t}(t) + M (x(t) - M^\dagger q) \in \frac{\diff w}{\diff t}(t) + M \left(\partial h^*(w(t)) - M^\dagger q\right) = \frac{\diff w}{\diff t}(t) + M \partial \Phi(w(t)) \, .
	\end{align*}
	\textbf{\ref{it:prec-gf}$ \Rightarrow $\ref{it:gf}.}
	We use Lemma~\ref{lem:subdiff}. As $u(t) = (M^\dagger)^{1/2} (w(t) - w(0))$, for a.e.~$t \geq 0$, $u$ is differentiable at $t$ and
\begin{align*}
	\frac{\diff u}{\diff t}(t) + \partial \varphi(u(t)) &= (M^\dagger)^{1/2} \frac{\diff w}{\diff t}(t) + M^{1/2} \partial h^*(w(0) + M^{1/2} u(t)) - (M^\dagger)^{1/2} q \\
&= (M^\dagger)^{1/2} \left[\frac{\diff w}{\diff t}(t) + M \partial h^*(w(t))-q\right] = (M^\dagger)^{1/2} \left[\frac{\diff w}{\diff t}(t) + M \partial \Phi(w(t))\right] \\
&\ni (M^\dagger)^{1/2} 0 = 0 \, .
\end{align*}
\textbf{\ref{it:gf}$ \Rightarrow $\ref{it:mf}.} Again, we use Lemma~\ref{lem:subdiff}. As $w(t) = w(0) + M^{1/2} u(t)$, for a.e.~$t \geq 0$, $w$ is differentiable at $t$ and
\begin{align*}
	\frac{\diff w}{\diff t}(t) &= M^{1/2} \frac{\diff u}{\diff t}(t) \in - M^{1/2} \left[M^{1/2} \partial h^*(w(t)) - (M^\dagger)^{1/2}q\right] = - \left[M \partial h^*(w(t)) - q\right] \, . 
\end{align*}
Thus there exists $x(t) \in \partial h^*(w(t))$ such that \begin{align*}
	\frac{\diff w}{\diff t}(t) &= - \left[M x(t) - q\right] = - \nabla \ell(x(t)) \, .
\end{align*}

\subsection{\texorpdfstring{Proof of Theorem~\ref{thm:existence-mirror-flow}}{Proof of Theorem (existence of mirror flows)}} 
\label{sec:proof-existence-mirror-flow}

We first introduce the notion of a subgradient flow associated with a function in $\Gamma_0(\R^d)$ and recall an existence and uniqueness result for such flows. We then use this result to prove Theorem~\ref{thm:existence-mirror-flow}.

\addtocontents{toc}{\protect\setcounter{tocdepth}{2}}
\subsubsection{Existence and uniqueness of subgradient flows}
\label{subsec:existence}

The existence and uniqueness result for subgradient flows that we use here was proved in~\cite{brezis1973ope}, which studies more generally the evolution equations associated with maximal monotone operators. This includes our setting: subgradients of convex, proper, lower semicontinuous functions are maximal monotone operators~\cite[Ex.~2.3.4]{brezis1973ope}.

\begin{definition}
	\label{def:sol-subgradient-flow}
Let $f \in \Gamma_0(\R^d)$ and $u:\R_{\geq 0} \to \R^d$. We say that $u$ is a solution of the subgradient flow associated with $f$ if, for almost all $t > 0$, $u$ is differentiable at $t$ and 
\begin{equation*}
	\frac{\diff u}{\diff t}(t) + \partial f(u(t)) \ni 0 \, .
\end{equation*}
\end{definition}

For all $u \in \R^d$, $\partial f(u)$ is a closed convex set~\cite[p.~28]{brezis1973ope}. When it is nonempty, we denote by $(\partial f)^\circ(u) = \argmin_{z \in \partial f(u)} \Vert z \Vert^2$ its element of minimal norm.

\begin{thm}[{\cite[Thm.~3.1]{brezis1973ope}}]
	\label{thm:brezis}
		Let $f \in \Gamma_0(\R^d)$ and $u_0 \in {\dom \partial f}$. There exists a unique Lipschitz solution $u$ of the subgradient flow associated with $f$ such that $u(0) = u_0$.
	Moreover, for a.e.~$t \geq 0$, $u$ is differentiable at $t$ and
	\begin{equation}
			\label{eq:pointwise-subgradient-flow}
			\frac{\diff u}{\diff t}(t) + (\partial f)^\circ(u(t)) = 0 \, .
		\end{equation}
\end{thm}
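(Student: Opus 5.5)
The statement to prove is Brézis' theorem (Theorem~\ref{thm:brezis}): existence and uniqueness of a Lipschitz solution of $\dot u + \partial f(u)\ni 0$ with $u(0)=u_0\in\dom\partial f$, together with the fact that the flow selects the minimal-norm subgradient. The plan is to go through Moreau--Yosida regularization.

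\textbf{Step 1: regularized flows.} For $\lambda>0$, let $f_\lambda(u)=\min_v f(v)+\frac{1}{2\lambda}\Vert u-v\Vert^2$ be the Moreau envelope. It is convex and $C^{1}$, with gradient $\nabla f_\lambda=(I-J_\lambda)/\lambda$, where $J_\lambda=(I+\lambda\partial f)^{-1}$ is the resolvent. The gradient is $1/\lambda$-Lipschitz, so Picard--Lindel\"of gives a unique global solution $u_\lambda$ of $\dot u_\lambda=-\nabla f_\lambda(u_\lambda)$ with $u_\lambda(0)=u_0$.

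\textbf{Step 2: uniform bounds.} Monotonicity of $\nabla f_\lambda$ makes the flow a contraction: two solutions satisfy $\frac{d}{dt}\Vert u-v\Vert^2\le 0$. Comparing $u_\lambda(\cdot)$ with $u_\lambda(\cdot+h)$ then shows that $t\mapsto\Vert\dot u_\lambda(t)\Vert$ is nonincreasing. Hence
\[
\Vert\dot u_\lambda(t)\Vert\le\Vert\nabla f_\lambda(u_0)\Vert\le\Vert(\partial f)^\circ(u_0)\Vert,
\]
using the standard resolvent inequality $\Vert\nabla f_\lambda(u)\Vert\le\Vert(\partial f)^\circ(u)\Vert$. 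This is exactly where the hypothesis $u_0\in\dom\partial f$ enters, and it gives a uniform Lipschitz bound on the $u_\lambda$.

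\textbf{Step 3: Cauchy estimate (the main obstacle).} I expect the key difficulty to be showing that $(u_\lambda)$ is Cauchy as $\lambda\to 0$. I would differentiate $\Vert u_\lambda-u_\mu\Vert^2$ and write $u=J_\lambda u+\lambda\nabla f_\lambda(u)$. Since $\nabla f_\lambda(u)\in\partial f(J_\lambda u)$, monotonicity of $\partial f$ applied at the points $J_\lambda u_\lambda$ and $J_\mu u_\mu$ gives
\[
\tfrac12\tfrac{d}{dt}\Vert u_\lambda-u_\mu\Vert^2\le \langle \nabla f_\lambda(u_\lambda)-\nabla f_\mu(u_\mu),\ \lambda\nabla f_\lambda(u_\lambda)-\mu\nabla f_\mu(u_\mu)\rangle .
\]
By the bound of Step 2, the right-hand side is at most $(\lambda+\mu)\Vert(\partial f)^\circ(u_0)\Vert^2$. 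Thus $u_\lambda\to u$ uniformly on compact sets, and the limit $u$ is Lipschitz.

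\textbf{Step 4: identifying the limit.} Since $J_\lambda u_\lambda-u_\lambda\to 0$, also $J_\lambda u_\lambda\to u$. The derivatives $\dot u_\lambda$ converge weakly-* in $L^\infty$ to $\dot u$. Because $\partial f$ is maximal monotone, its graph is closed under this strong--weak convergence, and we conclude $-\dot u(t)\in\partial f(u(t))$ for a.e.\ $t$. This can be done either by passing to the limit in the integrated subgradient inequality $f(v)\ge f(J_\lambda u_\lambda)+\langle -\dot u_\lambda,v-J_\lambda u_\lambda\rangle$, combined with lower semicontinuity of $f$, or by the monotonicity (Minty) trick.

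\textbf{Step 5: uniqueness.} If $u$ and $v$ are two solutions, both are absolutely continuous. Monotonicity of $\partial f$ gives $\frac{d}{dt}\Vert u-v\Vert^2\le 0$ a.e., and equal initial data force $u=v$.

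\textbf{Step 6: minimal norm.} Let $z\in\partial f(u(t))$. For a.e.\ $t$, the chain rule for convex functions along absolutely continuous curves gives $\frac{d}{dt}f(u(t))=\langle z,\dot u(t)\rangle$ for every such $z$. Taking $z=-\dot u(t)$ shows $\langle z,\dot u(t)\rangle=-\Vert\dot u(t)\Vert^2$ for all $z\in\partial f(u(t))$. Cauchy--Schwarz then yields $\Vert z\Vert\ge\Vert\dot u(t)\Vert$, so $-\dot u(t)$ is the element of minimal norm of $\partial f(u(t))$.
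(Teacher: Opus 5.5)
Your proof is correct. It is essentially the Yosida--Moreau approximation argument behind \cite[Thm.~3.1]{brezis1973ope}, which the paper cites rather than reproves. Your Step~6 chain-rule argument is legitimate here because $-\dot u$ is a bounded selection of $\partial f(u)$; alternatively, restarting the flow at $u(t)$ and using Step~2 gives $\Vert \dot u(t)\Vert\le\Vert(\partial f)^\circ(u(t))\Vert$, which suffices for any maximal monotone operator.

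The only slip is a dropped sign in Step~3. The correct inequality is $\tfrac12\tfrac{d}{dt}\Vert u_\lambda-u_\mu\Vert^2\le-\langle \nabla f_\lambda(u_\lambda)-\nabla f_\mu(u_\mu),\lambda\nabla f_\lambda(u_\lambda)-\mu\nabla f_\mu(u_\mu)\rangle\le\tfrac{\lambda+\mu}{4}\Vert(\partial f)^\circ(u_0)\Vert^2$, and this still yields your Cauchy estimate.
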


\subsubsection{Proof of Theorem~\ref{thm:existence-mirror-flow}}
\label{subsec:proof-existence-mirror-flow}

\paragraph{Uniqueness.} Let $w_1$ and $w_2$ be two Lipschitz solutions of the mirror flow with mirror potential $h$ and loss $\ell$ such that $w_1(0) = w_2(0) = w_0$. By Proposition~\ref{prop:equivalences-mf-gf}, as $w_0 \in \ri \dom h^* + \Span M$, $u_1(t) = (M^\dagger)^{1/2} (w_1(t) - w_0)$ and $u_2(t) = (M^\dagger)^{1/2} (w_2(t) - w_0)$ are two Lipschitz solutions of the subgradient flow associated with $\varphi(u) = h^*(w_0 + M^{1/2} u) - \langle (M^\dagger)^{1/2} q , u \rangle$ such that $u_1(0) = u_2(0) = 0$. By Theorem~\ref{thm:brezis}, $u_1 = u_2$, hence $w_1 = w_2$.

\paragraph{Existence.} Define $\varphi(u) = h^*(w_0 + M^{1/2} u) - \langle (M^\dagger)^{1/2} q , u \rangle$. As $w_0 \in \ri \dom h^* + \Span M$, Lemma~\ref{lem:subdiff} gives $\partial \varphi(u) = M^{1/2} \partial h^*(w_0 + M^{1/2} u) - (M^\dagger)^{1/2} q$. Since $w_0 \in \dom \partial h^*$, $0 \in \dom \partial \varphi$. By Theorem~\ref{thm:brezis}, there exists a unique Lipschitz solution $u$ of the subgradient flow associated with $\varphi$ such that $u(0) = 0$. Define $w(t) = w_0 + M^{1/2} u(t)$. By Proposition~\ref{prop:equivalences-mf-gf}, as $w_0 \in \ri \dom h^* + \Span M$, $w$ is a Lipschitz solution of the mirror flow with mirror potential $h$ and loss $\ell$ such that $w(0) = w_0$.

\paragraph{Minimization property.} By definition of the mirror flow, for a.e.~$t \geq 0$, $x(t) \in \partial h^*(w(t))$. We therefore only need to show that, for a.e.~$t \geq 0$ and all $y \in \partial h^*(w(t))$, $\ell(y) \geq \ell(x(t))$. This minimization property follows from Theorem~\ref{thm:brezis}, after a change of variables.

By Theorem~\ref{thm:brezis}, for a.e.~$t \geq 0$, $u$ is differentiable at $t$ and 
\begin{align*}
	&\frac{\diff u}{\diff t}(t) + (\partial \varphi)^\circ(u(t)) = 0 \, , &&(\partial \varphi)^\circ(u) = \underset{z \in \partial \varphi(u)}\argmin \Vert z \Vert^2 \, .
\end{align*}
Recall the formula for $\partial \varphi$ given by Lemma~\ref{lem:subdiff}. Consider $y \in \partial h^*(w(t))$ and define $z = M^{1/2} y - (M^\dagger)^{1/2}q \in \partial \varphi(u(t))$. We have the following equalities:
\begin{equation}
	\label{eq:aux-2}
	\Vert z \Vert^2 = \langle y , My \rangle - 2 \langle y, q \rangle + \langle q, M^\dagger q \rangle = 2 \ell(y) -2c + \langle q, M^\dagger q \rangle \, .
\end{equation}
Note that in the case where $y = x(t)$ (which is in $\partial h^*(w(t))$), we have 
\begin{align}
	\label{eq:aux-3}
	\begin{split}
	z &= M^{1/2} x(t) - (M^\dagger)^{1/2}q = (M^\dagger)^{1/2} \left(Mx(t)-q\right) = (M^\dagger)^{1/2} \nabla \ell(x(t)) \\
	&= - (M^\dagger)^{1/2} \frac{\diff w}{\diff t}(t)
	= - \frac{\diff u}{\diff t}(t) = (\partial \varphi)^\circ(u(t)) \, .
	\end{split}
\end{align}
These elements allow to conclude. For a.e.~$t \geq 0$, for all $y \in \partial h^*(w(t))$, we have
\begin{align*}
	2 \ell(y) -2c + \langle q, M^\dagger q \rangle &= \Vert z \Vert^2 &&\text{by Eq.~\eqref{eq:aux-2}} \, , \\
	&\geq \Vert (\partial \varphi)^\circ(u(t)) \Vert^2 &&\text{as $z \in \partial \varphi(u(t))$} \, ,  \\
	&= 2 \ell(x(t)) -2c + \langle q, M^\dagger q \rangle &&\text{by Eqs.~\eqref{eq:aux-3}, \eqref{eq:aux-2}} \, .
\end{align*}
Thus for all $y \in \partial h^*(w(t))$, $\ell(y) \geq \ell(x(t))$.

\subsection{Proof of Theorem~\ref{thm:main}}
\label{sec:proof-main}

As mentioned in the introduction, the article \cite{attouch2004singular} provides convergence results for mirror flows. However, they require the mirror potential to be uniformly strongly convex, which does not encompass our cases of interest. Thus we follow another strategy, based on a result of \cite{attouch1977convergence} on evolutionary convergence of subgradient flows under Mosco convergence of the associated functions. We first recall this result, and then prove Theorem~\ref{thm:main}.

\subsubsection{Evolutionary convergence of subgradient flows}
\label{subsec:evolutionary}

We first introduce Mosco convergence, which is the variational convergence notion behind the evolutionary convergence of subgradient flows.

\begin{definition}
	\label{def:mosco}
		Let $f^{\varepsilon}$, $\varepsilon > 0$, and $f \in \Gamma_0(\R^d)$. We say that $f^{\varepsilon}$ \emph{Mosco converges} to $f$ as $\varepsilon \to 0$, and write $ \displaystyle f^{\varepsilon} \xrightarrow[\varepsilon \to 0]{\mathsf{M}} f $, if
	\begin{itemize}
		\item ($\liminf$ inequality) for all $u \in \R^d$ and all $(u^{\varepsilon})_{\varepsilon > 0}$ converging to $u$ in $\R^d$ as $\varepsilon \to 0$,
		\begin{equation*}
			f(u) \leq \liminf_{\varepsilon \to 0} f^{\varepsilon}(u^{\varepsilon}) \, ,
		\end{equation*}
		\item ($\limsup$ inequality) for all $u \in \R^d$, there exists $(u^{\varepsilon})_{\varepsilon > 0}$ converging to $u$ in $\R^d$ as $\varepsilon \to 0$ such that
		\begin{equation*}
			f(u) \geq \limsup_{\varepsilon \to 0} f^{\varepsilon}(u^{\varepsilon}) \, .
		\end{equation*}
	\end{itemize}
\end{definition}

\begin{example}
	\label{ex:mosco}
	Recall that $C = \dom h$ is closed. For $\varepsilon > 0$, define $f^{\varepsilon} = {\varepsilon} h$. Then $f^{\varepsilon} \xrightarrow[\varepsilon \to 0]{\mathsf{M}} \iota_C$.

	Indeed, we check both inequalities.

	\begin{itemize}
		\item ($\liminf$ inequality) Let $u \in \R^d$ and let $(u^{\varepsilon})_{\varepsilon > 0}$ converge to $u$ in $\R^d$ as $\varepsilon \to 0$. If $u \notin C$, then $\iota_C(u) = +\infty$ and, since $C$ is closed, $u^\varepsilon \notin C$ for $\varepsilon > 0$ small enough. Thus $f^{\varepsilon}(u^\varepsilon) = +\infty = \iota_C(u)$ for $\varepsilon > 0$ small enough, and the $\liminf$ inequality holds. If $u \in C$, then $\iota_C(u) = 0$ and, since the convex function $h$ is locally bounded below~\cite[Coro.~16.18]{bauschke2017convex}, $\liminf_{\varepsilon \to 0} f^{\varepsilon}(u^\varepsilon) \geq 0 = \iota_C(u)$.
		\item ($\limsup$ inequality) Let $u \in \R^d$. We choose $u^\varepsilon = u$ for all $\varepsilon > 0$. Independently of whether $u$ is in $C$ or not, we have $\lim_{\varepsilon \to 0} f^{\varepsilon}(u^\varepsilon) = \lim_{\varepsilon \to 0} \varepsilon h(u) = \iota_C(u)$ and the $\limsup$ inequality holds. 
	\end{itemize}
\end{example}

We now recall the evolutionary convergence result under Mosco convergence of the associated functions.

\begin{thm}[{\cite{attouch1977convergence}, \cite[Thm.~3.74]{attouch1984variational}}]
	\label{thm:gamma_convergence}
	Let $f^{\varepsilon}$, $\varepsilon > 0$, and $f \in \Gamma_0(\R^d)$. Let $u^{\varepsilon}_0 \in {\dom \partial f^{\varepsilon}}$, $\varepsilon > 0$, and $u_0 \in {\dom \partial f}$. For all $\varepsilon > 0$, let $u^{\varepsilon}$ denote the unique Lipschitz solution of the subgradient flow associated with $f^{\varepsilon}$ such that $u^\varepsilon(0) = u_0^{\varepsilon}$. Similarly, let $u$ denote the unique Lipschitz solution of the subgradient flow associated with $f$ such that $u(0) = u_0$.

	Assume that $\displaystyle f^{\varepsilon} \xrightarrow[\varepsilon \to 0]{\mathsf{M}} f$ and  $u^{\varepsilon}_0 \xrightarrow[\varepsilon\to 0]{} u_0$ in $\R^d$. Then for all $T \geq 0$,  
	\begin{itemize}
		\item $u^{\varepsilon}$ converges to $u$ uniformly on $[0,T]$ and
		\item $\displaystyle \frac{\diff u^{\varepsilon}}{\diff t}$ converges to $\displaystyle \frac{\diff u}{\diff t}$ in $L^2([0,T], \R^d, t \diff t)$, that is,
		\[
			\int_0^T t \left\| \frac{\diff u^{\varepsilon}}{\diff t}(t) - \frac{\diff u}{\diff t}(t) \right\|^2 \diff t \xrightarrow[\varepsilon \to 0]{} 0.
		\]
	\end{itemize} 
\end{thm}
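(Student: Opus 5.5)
This is the classical evolutionary-convergence theorem. I would reprove it in three steps: (1) translate Mosco convergence into convergence of resolvents; (2) deduce uniform convergence of trajectories from a Trotter--Kato argument; (3) upgrade weak to strong convergence of derivatives in $L^2([0,T],\R^d,t\diff t)$ using an energy identity.

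\textbf{Step 1: resolvents.} For $\lambda>0$ write $J^\varepsilon_\lambda=(\mathrm{I}+\lambda\partial f^\varepsilon)^{-1}$ and $J_\lambda=(\mathrm{I}+\lambda\partial f)^{-1}$, and consider the Moreau envelopes $f^\varepsilon_\lambda(v)=\min_y f^\varepsilon(y)+\frac{1}{2\lambda}\Vert y-v\Vert^2$. In $\R^d$, Mosco convergence $f^\varepsilon\xrightarrow{\mathsf M} f$ is equivalent to pointwise convergence $f^\varepsilon_\lambda\to f_\lambda$ for every $\lambda>0$. The $\limsup$ inequality gives the upper bound and the $\liminf$ inequality gives the lower bound. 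Since envelopes are convex and differentiable with $1/\lambda$-Lipschitz gradients $\frac{1}{\lambda}(v-J_\lambda v)$, pointwise convergence of convex functions upgrades to $J^\varepsilon_\lambda v\to J_\lambda v$ for all $v$. This is graph convergence of $\partial f^\varepsilon$ to $\partial f$.

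\textbf{Step 2: uniform convergence.} The semigroups generated by $-\partial f^\varepsilon$ are contractions, so $\Vert u^\varepsilon(t)-S^\varepsilon(t)u_0\Vert\le \Vert u^\varepsilon_0-u_0\Vert\to0$. It therefore suffices to treat the common initial datum $u_0\in\dom\partial f$. I would compare each flow with its implicit Euler scheme with step $T/n$, i.e.\ iterates of $J^\varepsilon_{T/n}$. The Crandall--Liggett estimate gives a bound of order $\lambda\Vert(\partial f^\varepsilon)^\circ(\cdot)\Vert$ on the discretization error. This bound is not uniform in $\varepsilon$, so instead of $u_0$ I would start from $J^\varepsilon_\lambda u_0$, which converges to $J_\lambda u_0$ and lies in $\dom\partial f^\varepsilon$ with subgradient norm at most $\Vert u_0-J^\varepsilon_\lambda u_0\Vert/\lambda$. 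A $3\delta$-argument then gives uniform convergence on $[0,T]$: choose $\lambda$ small, then $n$ large, then $\varepsilon$ small. (This is the Brezis--Pazy / Trotter--Kato route.)

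\textbf{Step 3: derivatives.} I use the standard Brezis estimates. For any $v$,
\[
t\,f^\varepsilon(u^\varepsilon(t))+\tfrac12\Vert u^\varepsilon(t)-v\Vert^2\le t f^\varepsilon(v)+\tfrac12\Vert u^\varepsilon_0-v\Vert^2,
\]
and $t\mapsto\Vert \tfrac{\diff u^\varepsilon}{\diff t}\Vert$ is nonincreasing. Taking $v=v^\varepsilon$ a recovery sequence for a fixed point of $\dom f$ yields a uniform bound on $\int_0^T t\Vert \tfrac{\diff u^\varepsilon}{\diff t}\Vert^2\diff t$. Hence $\tfrac{\diff u^\varepsilon}{\diff t}$ converges weakly in $L^2(t\diff t)$, and by Step 2 the limit is $\tfrac{\diff u}{\diff t}$. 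For strong convergence I only need the norm bound
\[
\limsup_{\varepsilon\to0}\int_0^T t\Vert \tfrac{\diff u^\varepsilon}{\diff t}\Vert^2\diff t\le\int_0^T t\Vert \tfrac{\diff u}{\diff t}\Vert^2\diff t.
\]
To get it, I use the chain rule $\frac{\diff}{\diff t}f^\varepsilon(u^\varepsilon)=-\Vert \tfrac{\diff u^\varepsilon}{\diff t}\Vert^2$ and integrate by parts:
\[
\int_0^T t\Vert \tfrac{\diff u^\varepsilon}{\diff t}\Vert^2\diff t=\int_0^T f^\varepsilon(u^\varepsilon(t))\diff t-T f^\varepsilon(u^\varepsilon(T)),
\]
with the same identity for the limit. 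The $\liminf$ inequality handles the boundary term, since $\liminf_\varepsilon f^\varepsilon(u^\varepsilon(T))\ge f(u(T))$.

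\textbf{Main obstacle.} I expect the hardest part to be showing $\limsup_\varepsilon\int_0^T f^\varepsilon(u^\varepsilon)\diff t\le\int_0^T f(u)\diff t$. My plan is to use the subgradient inequality
\[
f^\varepsilon(u^\varepsilon(t))\le f^\varepsilon(v^\varepsilon(t))+\langle \tfrac{\diff u^\varepsilon}{\diff t}(t),\,v^\varepsilon(t)-u^\varepsilon(t)\rangle,
\]
where $v^\varepsilon(t)=J^\varepsilon_\lambda(u(t))$ serves as a measurable recovery sequence along the limit trajectory. The inner-product term vanishes in the limit: its first factor is bounded in $L^2(t\diff t)$ and its second factor tends to $0$ uniformly once $\lambda\to0$. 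Two points need care here: controlling the region near $t=0$, which is exactly where the weight $t$ is used, and justifying domination so that $\int f^\varepsilon(v^\varepsilon)\to\int f_\lambda$-type quantities.
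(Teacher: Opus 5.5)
Your architecture is essentially the classical route behind the result, which the paper imports from Attouch without proof. It runs Mosco convergence $\Rightarrow$ resolvent convergence $\Rightarrow$ nonlinear Trotter--Kato for uniform convergence $\Rightarrow$ energy identity plus a weak-to-strong upgrade for $\sqrt{t}\,\frac{\diff u^\varepsilon}{\diff t}$. Steps 1--2 are essentially sound, and so is the reduction in Step 3 to $\limsup_{\varepsilon}\int_0^T f^\varepsilon(u^\varepsilon)\,\diff t\le\int_0^T f(u)\,\diff t$. The gap is that this last inequality, which carries the whole strong-convergence claim, is not proved, and the mechanism you propose fails as stated.

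The quantity is an \emph{unweighted} integral. After integrating your subgradient inequality you must control $\int_0^T\bigl\langle\frac{\diff u^\varepsilon}{\diff t},v^\varepsilon-u^\varepsilon\bigr\rangle\,\diff t$. A bound on the first factor in $L^2([0,T],\R^d,t\,\diff t)$ does not bound $\int_0^T\Vert\frac{\diff u^\varepsilon}{\diff t}\Vert\,\diff t$: Cauchy--Schwarz produces $\int_0^T t^{-1}\,\diff t=+\infty$. None of your other estimates is uniform near $t=0$ either, since $f^\varepsilon(u^\varepsilon_0)$ and $\Vert(\partial f^\varepsilon)^\circ(u^\varepsilon_0)\Vert$ may blow up. The weight $t$ does not appear in this integral, so it cannot be invoked there.

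The missing idea is to move the time derivative onto the comparison curve, which, unlike $u^\varepsilon$, is uniformly Lipschitz. Indeed $v^\varepsilon=J^\varepsilon_\lambda\circ u$ has Lipschitz constant at most $L:=\Vert(\partial f)^\circ(u_0)\Vert$ for all $\varepsilon,\lambda$. Writing $e^\varepsilon=u^\varepsilon-v^\varepsilon$,
\begin{align*}
\int_0^T\Bigl\langle\frac{\diff u^\varepsilon}{\diff t},v^\varepsilon-u^\varepsilon\Bigr\rangle\diff t
&=\frac12\Vert e^\varepsilon(0)\Vert^2-\frac12\Vert e^\varepsilon(T)\Vert^2-\int_0^T\Bigl\langle\frac{\diff v^\varepsilon}{\diff t},e^\varepsilon\Bigr\rangle\diff t\\
&\le\frac12\Vert u^\varepsilon_0-J^\varepsilon_\lambda u_0\Vert^2+LT\sup_{[0,T]}\Vert e^\varepsilon\Vert .
\end{align*}
Now use Step 2, the uniform convergence of the nonexpansive maps $J^\varepsilon_\lambda$ on the compact set $u([0,T])$, the bound $\Vert x-J_\lambda x\Vert\le\lambda\Vert(\partial f)^\circ(x)\Vert$, and the monotonicity of $t\mapsto\Vert(\partial f)^\circ(u(t))\Vert$. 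These give that the $\limsup$ of the right-hand side is at most $\frac12\lambda^2L^2+\lambda TL^2$. The domination issue also disappears. We have $f^\varepsilon(J^\varepsilon_\lambda x)\le f^\varepsilon_\lambda(x)$, and finite convex functions converging pointwise on $\R^d$ converge uniformly on compact sets. Hence $\int_0^T f^\varepsilon_\lambda(u)\,\diff t\to\int_0^T f_\lambda(u)\,\diff t\le\int_0^T f(u)\,\diff t$, and letting $\lambda\to0$ closes the argument.

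Alternatively, apply your Brezis inequality on $[s,t]$ with a recovery sequence for $u(t)$. This gives $\limsup_\varepsilon f^\varepsilon(u^\varepsilon(t))\le f(u(t))+\Vert u(s)-u(t)\Vert^2/(2(t-s))$, hence $f^\varepsilon(u^\varepsilon(t))\to f(u(t))$ for every $t>0$. Its integrated form on $[0,\delta]$ with a recovery sequence for $u_0$ then handles the initial layer.

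Two smaller points. The lower bound in Step 1 needs a uniform affine minorant of the $f^\varepsilon$ to keep $J^\varepsilon_\lambda v$ bounded; one comes from a recovery sequence for $(f^\varepsilon)^*$ at a point of $\dom f^*$. The first reduction in Step 2 to the common datum $u_0$ is meaningless when $u_0\notin\overline{\dom f^\varepsilon}$, but your variant starting from $J^\varepsilon_\lambda u_0$ avoids this.
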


Finally, we recall that Fenchel conjugation is bicontinuous for Mosco convergence.

\begin{proposition}[{\cite[Thm.~3.18]{attouch1984variational}}]
	\label{prop:mosco-fenchel}
	Let $f^{\varepsilon}$, $\varepsilon > 0$, and $f\in\Gamma_0(\R^d)$. Then 
	\begin{equation*}
	f^{\varepsilon} \xrightarrow[\varepsilon \to 0]{\mathsf{M}} f \qquad \Longleftrightarrow \qquad (f^{\varepsilon})^* \xrightarrow[\varepsilon \to 0]{\mathsf{M}} f^* \, .
	\end{equation*} 
\end{proposition}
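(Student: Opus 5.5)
The plan is to work with the finite-dimensional characterization of Mosco convergence in Definition~\ref{def:mosco}, which here is epi-convergence. I prove only the forward implication $f^\varepsilon \xrightarrow{\mathsf{M}} f \Rightarrow (f^\varepsilon)^* \xrightarrow{\mathsf{M}} f^*$. The converse then follows by applying the forward implication to $(f^\varepsilon)^*$ and $f^*$ and using the involution $(g^*)^* = g$ on $\Gamma_0(\R^d)$.

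\emph{Liminf inequality for the conjugates.} Let $w^\varepsilon \to w$ and fix an arbitrary $x \in \R^d$. Take a recovery sequence $x^\varepsilon \to x$ for $f$ at $x$, so that $\limsup f^\varepsilon(x^\varepsilon) \leq f(x)$. The Fenchel--Young inequality gives $(f^\varepsilon)^*(w^\varepsilon) \geq \langle w^\varepsilon, x^\varepsilon\rangle - f^\varepsilon(x^\varepsilon)$. Taking the $\liminf$ yields $\liminf (f^\varepsilon)^*(w^\varepsilon) \geq \langle w, x\rangle - f(x)$. Taking the supremum over $x$ yields $\liminf (f^\varepsilon)^*(w^\varepsilon) \geq f^*(w)$.

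\emph{Limsup inequality for the conjugates.} This is the real content. The naive choice $w^\varepsilon = w$ fails: epi-convergence of $f^\varepsilon - \langle w,\cdot\rangle$ only gives $\limsup \inf \leq \inf$, which is the wrong direction. I will regularize quadratically and pass through Moreau envelopes of the conjugates. For $\lambda > 0$ set
\begin{equation*}
	e_\lambda (f^\varepsilon)^*(w) = \min_v \Big\{ (f^\varepsilon)^*(v) + \tfrac{1}{2\lambda}\|w-v\|^2 \Big\} = \Big(f^\varepsilon + \tfrac{\lambda}{2}\|\cdot\|^2\Big)^*(w) = -\inf_x \Big\{ f^\varepsilon(x) + \tfrac{\lambda}{2}\|x\|^2 - \langle w,x\rangle \Big\} ,
\end{equation*}
and similarly for $f$. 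The key step is to show that $e_\lambda (f^\varepsilon)^*(w) \to e_\lambda f^*(w)$ for every $w$ and $\lambda$. Since the added quadratic is continuous, $g^\varepsilon = f^\varepsilon + \frac{\lambda}{2}\|\cdot\|^2 - \langle w,\cdot\rangle$ epi-converges to the corresponding $g$. Epi-convergence plus equi-coercivity gives convergence of infima. The main obstacle is therefore equi-coercivity, which amounts to a \emph{uniform affine minorant} $f^\varepsilon(x) \geq -a - b\|x\|$ for $\varepsilon$ small.

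I would derive this minorant by contradiction. Suppose there are $x^\varepsilon$ with $f^\varepsilon(x^\varepsilon) + k\|x^\varepsilon\| \to -\infty$ along a subsequence, for each $k$. Fix a point $x_0 \in \dom f$ with recovery sequence $x_0^\varepsilon$. By convexity, $f^\varepsilon$ evaluated on the segment from $x_0^\varepsilon$ toward $x^\varepsilon$, at a point at unit distance from $x_0^\varepsilon$, tends to $-\infty$. By compactness of the unit sphere in $\R^d$, these points converge to some $y$ (after extraction), and then $f(y) = -\infty$ by the liminf inequality, contradicting properness. A diagonal argument over $k$ handles the quantifiers.

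With the minorant in hand, the minimizers of $g^\varepsilon$ stay bounded, so their infima converge and $e_\lambda(f^\varepsilon)^*(w) \to e_\lambda f^*(w)$.

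\emph{Conclusion.} Assume $f^*(w) < \infty$; otherwise there is nothing to prove. Let $v^\varepsilon_\lambda$ be the proximal point realizing $e_\lambda(f^\varepsilon)^*(w)$. Then
\begin{equation*}
	(f^\varepsilon)^*(v^\varepsilon_\lambda) + \tfrac{1}{2\lambda}\|w-v^\varepsilon_\lambda\|^2 \to e_\lambda f^*(w) \leq f^*(w) .
\end{equation*}
To control $\|w - v^\varepsilon_\lambda\|$, I would use the lower bound on $(f^\varepsilon)^*$ coming from the liminf part combined with $f^* > -\infty$ near $w$: an affine minorant of $f^*$ transfers to the $(f^\varepsilon)^*$ by the same compactness argument. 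This gives $\limsup_\varepsilon \|w - v^\varepsilon_\lambda\|^2 \leq 2\lambda\,(f^*(w) + C(1+\|w\|))$, hence $v^\varepsilon_\lambda \to w$ as $\lambda \to 0$ uniformly in small $\varepsilon$. A diagonal choice $\lambda = \lambda(\varepsilon) \to 0$ slowly then produces $w^\varepsilon \to w$ with $\limsup (f^\varepsilon)^*(w^\varepsilon) \leq f^*(w)$.
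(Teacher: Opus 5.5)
The paper does not prove this statement. It imports it from Attouch's monograph, where it is Mosco's theorem on the bicontinuity of the Fenchel transform, stated for Mosco convergence in reflexive Banach spaces.

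Your argument is a self-contained finite-dimensional proof and is correct in outline. It follows the classical Moreau-envelope route:
\begin{itemize}
\item the liminf half by Fenchel--Young with a primal recovery sequence;
\item the limsup half by writing $e_\lambda(f^\varepsilon)^*(w)=-\inf_x\{f^\varepsilon(x)+\tfrac{\lambda}{2}\|x\|^2-\langle w,x\rangle\}$, getting convergence of these infima from a uniform primal minorant and equi-coercivity, and then using proximal points with a diagonal choice $\lambda(\varepsilon)\to0$;
\item the converse by the involution $(g^*)^*=g$ on $\Gamma_0(\R^d)$.
\end{itemize}
Your contradiction argument for the primal minorant, anchored at the recovery sequence $x_0^\varepsilon$, is sound. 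If the bad points stay within unit distance of the anchor, the liminf inequality gives the contradiction directly. You also located the real obstacle correctly: only $\limsup_\varepsilon e_\lambda(f^\varepsilon)^*(w)\le e_\lambda f^*(w)$ is used, and that is exactly the direction that needs equi-coercivity. The citation buys generality (weak liminf, infinite dimensions). Your route buys an elementary proof in exactly the setting the paper needs, with pointwise convergence of Moreau envelopes as a by-product.

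One justification has to be replaced. To control $\|w-v^\varepsilon_\lambda\|$ you need a minorant $(f^\varepsilon)^*(v)\ge -a'-b'\|v\|$ for all small $\varepsilon$. You propose to get it by transferring a minorant of $f^*$ ``by the same compactness argument''. As written this is circular. That argument needs an anchor: a bounded family $v_0^\varepsilon$ with $(f^\varepsilon)^*(v_0^\varepsilon)$ bounded above. That is a limsup statement for the conjugates, which is what you are proving, and a lower bound on $f^*$ does not supply it.

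The fix is one line with the anchor you already have. For $\varepsilon$ small, $f^\varepsilon(x_0^\varepsilon)\le f(x_0)+1$ and $\|x_0^\varepsilon\|\le\|x_0\|+1$, so Fenchel--Young gives
\[
(f^\varepsilon)^*(v)\ \ge\ \langle v,x_0^\varepsilon\rangle-f^\varepsilon(x_0^\varepsilon)\ \ge\ -(\|x_0\|+1)\|v\|-f(x_0)-1 \qquad\text{for all } v\in\R^d .
\]
With this bound, $r=\|w-v^\varepsilon_\lambda\|$ satisfies $r^2\le 2\lambda K+2\lambda b'r$, where $b'=\|x_0\|+1$ and $K\le f^*(w)+1+a'+b'\|w\|$. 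You dropped the linear term, which is harmless, so $r=O(\sqrt{\lambda})$.

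This bound holds only for $\varepsilon\le\varepsilon_0(\lambda)$, because $e_\lambda(f^\varepsilon)^*(w)\to e_\lambda f^*(w)$ is established separately for each $\lambda$. So ``uniformly in small $\varepsilon$'' is not literally true. Your diagonal choice of $\lambda(\varepsilon)$ is exactly what absorbs this dependence, and with these adjustments the proof is complete.
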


\begin{example}
	\label{ex:mosco-conjugate}
	Combining Example~\ref{ex:mosco} and the above proposition, we have $(\varepsilon h)^* \xrightarrow[\varepsilon \to 0]{\mathsf{M}} \iota_C^* = \sigma_C$.
\end{example}

\subsubsection{Proof of Theorem~\ref{thm:main}}
\label{subsec:proof-main}

The proof strategy is to reduce the rescaled mirror flows to subgradient flows and then apply the evolutionary convergence result of the previous section.

Recall that $w^\varepsilon$ is a solution of the mirror flow with mirror potential $h$ and loss $\ell$, with $x^\varepsilon$ an associated primal variable. Thus, for a.e.~$t \geq 0$, $w^\varepsilon$ is differentiable at $t$ and
\begin{align*}
	&\frac{\diff w^\varepsilon}{\diff t}(t) = - \nabla \ell(x^\varepsilon(t)) \, , &&w^\varepsilon(t) \in \partial h(x^\varepsilon(t)) \, .
\end{align*}
We now compute the induced mirror flow in the rescaled variables 
\begin{align*}
	&\overline{w}^\varepsilon(s) = \frac{1}{\mu^\varepsilon} w^\varepsilon(\mu^\varepsilon s)  \, , &&\overline{x}^\varepsilon(s) = x^\varepsilon(s \mu^\varepsilon) \, .
\end{align*}
We obtain that, for a.e.~$s \geq 0$, $\overline{w}^\varepsilon$ is differentiable at $s$ and
\begin{align}
	&\frac{\diff \overline{w}^\varepsilon}{\diff s}(s) = \frac{\mu^\varepsilon}{\mu^\varepsilon} \frac{\diff w^\varepsilon}{\diff t}(\mu^\varepsilon s) = - \nabla \ell({x}^\varepsilon(\mu^\varepsilon s)) = -\nabla \ell(\overline{x}^\varepsilon(s)) \, , \nonumber \\
	&\overline{w}^\varepsilon(s) = \frac{1}{\mu^\varepsilon} w^\varepsilon(\mu^\varepsilon s) \in \frac{1}{\mu^\varepsilon} \partial h(x^\varepsilon(\mu^\varepsilon s))  = \partial \left( \frac{1}{\mu^\varepsilon} h \right) (\overline{x}^\varepsilon(s)) \, . \label{eq:aux-4}
\end{align}
Thus $\overline{w}^\varepsilon$ is a solution of the mirror flow with mirror potential $h^\varepsilon := \frac{1}{\mu^\varepsilon} h$ and loss $\ell$, with $\overline{x}^\varepsilon$ as an associated primal variable. By assumption, the dual initialization $\overline{w}^\varepsilon(0) = \overline{w}_0^\varepsilon$ converges to a finite limit $\overline{w}_0$ as $\varepsilon \to 0$. Thus rescaling removes the degeneracy of the initialization, at the price of scaling the mirror potential.

We now transform these mirror flows into subgradient flows to apply evolutionary convergence results. By Proposition~\ref{prop:equivalences-mf-gf}, $u^\varepsilon = (M^\dagger)^{1/2} (\overline{w}^\varepsilon - \overline{w}_0^\varepsilon)$ is the unique Lipschitz solution of the subgradient flow associated with
\begin{equation*}
	\varphi^\varepsilon(u) = \left(h^\varepsilon\right)^*(\overline{w}_0^\varepsilon + M^{1/2} u) - \left\langle (M^\dagger)^{1/2} q ,  u \right\rangle \, ,
\end{equation*}
and $u^\varepsilon(0) = 0$. To analyze the evolutionary convergence, we thus need to analyze the Mosco convergence of $\varphi^\varepsilon$ as $\varepsilon \to 0$. 

By Example~\ref{ex:mosco-conjugate}, $(h^\varepsilon)^* \xrightarrow[\varepsilon \to 0]{\mathsf{M}} \sigma_C$. Since simultaneously $\overline{w}_0^\varepsilon \xrightarrow[\varepsilon \to 0]{} \overline{w}_0$, this suggests that $\varphi^\varepsilon \xrightarrow[\varepsilon\to 0]{\mathsf{M}} \varphi$, where
\begin{equation*}
	\varphi(u) = \sigma_C (M^{1/2} u + \overline{w}_0) - \langle {M^{\dagger}}^{1/2} q, u \rangle \, .
\end{equation*}
This statement is not automatic and requires the assumptions of Theorem~\ref{thm:main}. We isolate it in the lemma below and postpone its proof to Section~\ref{sec:proof-mosco_recessions}.

\begin{lem}
	\label{lem:mosco_recessions}
	Under the assumptions of Theorem~\ref{thm:main}, $\varphi^\varepsilon \xrightarrow[\varepsilon \to 0]{\mathsf{M}} \varphi$.
\end{lem}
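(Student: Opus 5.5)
The plan is to handle the composite structure of $\varphi^\varepsilon$ directly: the linear term $-\langle (M^\dagger)^{1/2} q , u\rangle$ is continuous and does not depend on $\varepsilon$, so it passes through both Mosco inequalities. Write $g^\varepsilon = (h^\varepsilon)^*$ and $\delta^\varepsilon = \overline{w}_0^\varepsilon - \overline{w}_0$. Since $h^\varepsilon = \frac{1}{\mu^\varepsilon}h$, we have
\[
	g^\varepsilon(w) = \tfrac{1}{\mu^\varepsilon} h^*(\mu^\varepsilon w) \, .
\]
It therefore suffices to prove Mosco convergence of $u \mapsto g^\varepsilon(\overline{w}_0^\varepsilon + M^{1/2}u)$ to $u \mapsto \sigma_C(\overline{w}_0 + M^{1/2}u)$.

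\textbf{Liminf inequality.} This should be immediate. If $u^\varepsilon \to u$, then $\overline{w}_0^\varepsilon + M^{1/2}u^\varepsilon \to \overline{w}_0 + M^{1/2}u$. The $\liminf$ part of Example~\ref{ex:mosco-conjugate} (with $\frac{1}{\mu^\varepsilon}\to 0$ in place of $\varepsilon$) then gives the bound.

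\textbf{Limsup inequality.} This is the main obstacle. Mosco convergence of $g^\varepsilon$ supplies recovery points $w^\varepsilon \to w$, but they need not lie in the affine range $\overline{w}_0^\varepsilon + \Span M$. Instead I will use an explicit pointwise upper bound. Because $\dom h^* = \R^d$, the difference quotient $t \mapsto (h^*(tw) - h^*(0))/t$ is nondecreasing in $t>0$. By \cite[Thm.~13.3]{rockafellar1970convex}, its limit as $t\to\infty$ is the recession function of $h^*$, which equals $\sigma_{\dom h} = \sigma_C$. Hence, for every $w$,
\[
	g^\varepsilon(w) \le \tfrac{1}{\mu^\varepsilon}h^*(0) + \sigma_C(w) \, .
\]

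\textbf{Construction of the recovery sequence.} Fix $u$ with $w := \overline{w}_0 + M^{1/2}u \in \dom \sigma_C$; otherwise there is nothing to prove. By assumption~\ref{it:main-1}, pick $a$ with $z := \overline{w}_0 + M^{1/2}a \in \ri \dom \sigma_C$. Choose $\lambda^\varepsilon \to 0$ with $\Vert\delta^\varepsilon\Vert/\lambda^\varepsilon \to 0$, for example $\lambda^\varepsilon = \max(\Vert \delta^\varepsilon\Vert^{1/2}, 1/\mu^\varepsilon)$, and set $u^\varepsilon = (1-\lambda^\varepsilon)u + \lambda^\varepsilon a \to u$. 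Then
\[
	\overline{w}_0^\varepsilon + M^{1/2}u^\varepsilon = (1-\lambda^\varepsilon) w + \lambda^\varepsilon\bigl(z + \delta^\varepsilon/\lambda^\varepsilon\bigr) \, .
\]
By assumption~\ref{it:main-2}, $\delta^\varepsilon$ lies in the direction space of $\aff\dom\sigma_C$ (it is a difference of points of that affine set when $\varepsilon$ is small). Hence $z + \delta^\varepsilon/\lambda^\varepsilon$ stays in a fixed compact neighborhood $K$ of $z$ relative to $\aff \dom\sigma_C$, with $K \subset \ri\dom\sigma_C$. The convex function $\sigma_C$ is continuous on $\ri \dom \sigma_C$ relative to the affine hull \cite[Thm.~10.1]{rockafellar1970convex}, so it is bounded on $K$ by some $B$. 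Convexity of $g^\varepsilon$ and the pointwise bound then give
\[
	g^\varepsilon(\overline{w}_0^\varepsilon + M^{1/2}u^\varepsilon) \le (1-\lambda^\varepsilon)\sigma_C(w) + \lambda^\varepsilon B + \tfrac{1}{\mu^\varepsilon}h^*(0) \, .
\]
The right-hand side tends to $\sigma_C(w)$, which yields the $\limsup$ inequality. Finally, add back the continuous linear term along $u^\varepsilon \to u$.

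The step I expect to require care is exactly this limsup construction. Both assumptions of Theorem~\ref{thm:main} are used there: \ref{it:main-1} provides a relative-interior anchor $z$ reachable within $\overline{w}_0 + \Span M$, and \ref{it:main-2} keeps the perturbation $\delta^\varepsilon$ inside the affine hull, so that it can be absorbed near $z$. One should also check the monotonicity and recession identity $\lim_{t\to\infty}(h^*(tw)-h^*(0))/t = \sigma_C(w)$, including the value $+\infty$ outside $\dom\sigma_C$.
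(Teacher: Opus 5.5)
Your proof is correct, and it takes a more direct route than the paper. Both proofs use the same recovery sequence: the paper's Lemma~\ref{lem:mosco_recessions_rec} shows $\sigma_C(M^{1/2}\cdot+\overline{w}_0^\varepsilon)\to\sigma_C(M^{1/2}\cdot+\overline{w}_0)$ in the Mosco sense with $z_\varepsilon=(1-t_\varepsilon)z+t_\varepsilon\overline{z}$, which is essentially your $u^\varepsilon$.

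The routes diverge in how they bring in $(h^\varepsilon)^*\to\sigma_C$. The paper passes to the dual side. It invokes bicontinuity of Mosco convergence under conjugation (Proposition~\ref{prop:mosco-fenchel}). It computes $(\Psi^\varepsilon)^*$, $(\overline{\Psi}^\varepsilon)^*$ and $\Psi^*$ with the composition rule of Proposition~\ref{prop:conjugate_composition}, checking a qualification condition each time. It then proves the conjugate $\liminf$ via \eqref{eq:aux-1} and the conjugate $\limsup$ via attainment of the infimum.

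Your uniform bound $(h^\varepsilon)^*\le\sigma_C+h^*(0)/\mu^\varepsilon$ is exactly the primal counterpart of \eqref{eq:aux-1}, and it decouples the two convergences at once. The $\liminf$ needs no affine structure, since it holds along arbitrary convergent sequences by Example~\ref{ex:mosco-conjugate}. The $\limsup$ reduces to the recovery sequence for $\sigma_C$ alone. What you gain is a shorter, self-contained argument with no conjugate calculus and no qualification conditions. The paper's version mainly buys modularity (Lemma~\ref{lem:mosco_recessions_rec} is stated for a general $g$) and explicit dual formulas.

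Two small remarks. First, the bound needs no recession-function theory:
$h^*(\mu w)=\sup_{x\in C}\{\mu\langle w,x\rangle-h(x)\}\le\mu\,\sigma_C(w)+\sup_x\{-h(x)\}=\mu\,\sigma_C(w)+h^*(0)$.
Second, the clean reason $\delta^\varepsilon$ lies in the direction space is that $0\in\dom\sigma_C$. Hence $\aff\dom\sigma_C$ is a linear subspace, and assumption~(ii) places $\delta^\varepsilon$ in it directly. Your parenthetical presupposes $\overline{w}_0^\varepsilon\in\aff\dom\sigma_C$, which itself only follows from this observation.

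Your choice $\lambda^\varepsilon\ge 1/\mu^\varepsilon>0$ also neatly avoids the degenerate case $\overline{w}_0^\varepsilon=\overline{w}_0$. There the paper's $t_\varepsilon=\sqrt{\Vert\overline{w}_0^\varepsilon-\overline{w}_0\Vert}$ vanishes.
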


Using this lemma, we apply Theorem~\ref{thm:gamma_convergence}. Let $u = (M^\dagger)^{1/2} (\overline{w} - \overline{w}_0)$. By Proposition~\ref{prop:equivalences-mf-gf}, $u$ is the unique Lipschitz solution of the subgradient flow associated with $\varphi$ such that $u(0) = 0$. Fix $S > 0$. By Theorem~\ref{thm:gamma_convergence}, $u^\varepsilon$ converges to $u$ uniformly on $[0,S]$ and $\frac{\diff u^\varepsilon}{\diff s}$ converges to $\frac{\diff u}{\diff s}$ in $L^2([0,S], \R^d, s \diff s)$. We now use these convergences to prove the statements of the theorem.

\begin{enumerate}[label=(\alph*)]
	\item We have $\overline{w}^\varepsilon = \overline{w}_0^\varepsilon + M^{1/2} u^\varepsilon$ and $\overline{w} = \overline{w}_0 + M^{1/2} u$, where $\overline{w}_0^\varepsilon \xrightarrow[\varepsilon\to 0]{} \overline{w}_0$ and $u^\varepsilon \xrightarrow[\varepsilon\to 0]{} u$ uniformly on $[0,S]$. Thus $\overline{w}^\varepsilon$ converges to $\overline{w}$ uniformly on $[0,S]$. 
	\item 
	We compute for a.e.~$s \in [0,S]$,
	\begin{align*}
		\frac{\diff u^\varepsilon}{\diff s}(s) = (M^\dagger)^{1/2} \frac{\diff \overline{w}^\varepsilon}{\diff s}(s) = - (M^\dagger)^{1/2} \nabla \ell(\overline{x}^\varepsilon(s)) \, ,
	\end{align*}
	and similarly
	\begin{align*}
		\frac{\diff u}{\diff s}(s) = (M^\dagger)^{1/2} \frac{\diff \overline{w}}{\diff s}(s) = - (M^\dagger)^{1/2} \nabla \ell(\overline{x}(s)) \, .
	\end{align*}
	As $\displaystyle \frac{\diff u^\varepsilon}{\diff s} \xrightarrow[\varepsilon\to 0]{}\frac{\diff u}{\diff s}$ in $L^2([0,S], \R^d, s \diff s)$, and for all $x \in \R^d$, $\nabla \ell(x) \in \Span M$, we have $\displaystyle \nabla \ell(\overline{x}^\varepsilon(s)) \xrightarrow[\varepsilon\to 0]{} \nabla \ell(\overline{x}(s))$ in $L^2([0,S], \R^d, s \diff s)$.

	Moreover, 
	\begin{align*}
		\left\Vert \frac{\diff u^\varepsilon}{\diff s}(s) \right\Vert^2 &= \left\Vert (M^\dagger)^{1/2} \nabla \ell(\overline{x}^\varepsilon(s)) \right\Vert^2 = \left\Vert M^{1/2} \overline{x}^\varepsilon(s) - (M^{\dagger})^{1/2} q \right\Vert^2 \\
		&= 2 \ell(\overline{x}^\varepsilon(s)) -2c + \langle q, M^\dagger q \rangle \, .
	\end{align*}
	Similarly,
	\begin{align*}
		\left\Vert \frac{\diff u}{\diff s}(s) \right\Vert^2 &= 2 \ell(\overline{x}(s)) -2c + \langle q, M^\dagger q \rangle \, .
	\end{align*}
	We thus obtain that $\ell(\overline{x}^\varepsilon(s))$ converges to $\ell(\overline{x}(s))$ in $L^1([0,S], \R, s \diff s)$. Finally, by Theorem~\ref{thm:existence-mirror-flow}, $\ell(\overline{x}(s)) = \min_{y \in \partial \sigma_C(\overline{w}(s))} \ell(y)$ for a.e.~$s \in [0,S]$. This concludes the proof of the second statement.
	\item We now assume that there exists a subsequence $\varepsilon_k \xrightarrow[k\to\infty]{} 0$ and an accumulation point $s \in [0,S] \mapsto \overline{y}(s)$ such that for a.e.~$s \in [0,S]$, $\overline{x}^{\varepsilon_k}(s) \xrightarrow[k\to\infty]{} \overline{y}(s)$. We need to show that for a.e.~$s \in [0,S]$, $\overline{y}(s) \in \underset{y \in \partial \sigma_C(\overline{w}(s))}\Argmin \ell(y)$. 
	
	By statement~\ref{it:main-statement-2}, $\ell(\overline{x}^{\varepsilon_k}(s)) \xrightarrow[k\to\infty]{} \min_{y \in \partial \sigma_C(\overline{w}(s))} \ell(y)$ in $L^1([0,S], \R, s \diff s)$. Moreover, by continuity of $\ell$, $\ell(\overline{x}^{\varepsilon_k}(s)) \xrightarrow[k\to\infty]{} \ell(\overline{y}(s))$ a.e. Thus we must have
	\begin{equation*}
		\ell(\overline{y}(s)) = \min_{y \in \partial \sigma_C(\overline{w}(s))} \ell(y)
	\end{equation*}
	for a.e. $s \in [0,S]$. Indeed, the convergence in $L^1([0,S], \R, s \diff s)$ and a.e.~convergence both imply convergence in measure, for which the limit is essentially unique. 

	We are left with proving that $\overline{y}(s) \in \partial \sigma_C(\overline{w}(s))$ for a.e.~$s \in [0,S]$. Fix $s \in [0,S]$ such that $\overline{x}^{\varepsilon_k}(s) \xrightarrow[k\to\infty]{} \overline{y}(s)$. Recall that, by Eq.~\eqref{eq:aux-4}:
\begin{equation}
	\label{eq:aux-5}
	\overline{w}^{\varepsilon_k}(s) \in \partial \left(\frac{1}{\mu^{\varepsilon_k}} h \right) (\overline{x}^{\varepsilon_k}(s)) \, .
\end{equation}
From statement~\ref{it:main-statement-1}, $\overline{w}^{\varepsilon_k}(s) \xrightarrow[k\to\infty]{} \overline{w}(s)$. Moreover, from Example~\ref{ex:mosco}, $\frac{1}{\mu^{\varepsilon_k}} h \xrightarrow[k\to\infty]{\mathsf{M}} \iota_C$. This implies that $\partial \left(\frac{1}{\mu^{\varepsilon_k}} h \right) \xrightarrow[k\to\infty]{} \partial \iota_C$ in the resolvent sense~\cite{attouch1977convergence}, also called the graph sense~\cite[Def.~3.58]{attouch1984variational}. Then~\cite[statement (1.7)]{attouch1977convergence} (or \cite[Prop.~3.59]{attouch1984variational}) implies that we can take the limit in Eq.~\eqref{eq:aux-5}. This gives $\overline{w}(s) \in \partial \iota_C(\overline{y}(s))$, that is $\overline{y}(s) \in \partial \sigma_C(\overline{w}(s))$. This concludes the proof of the third statement.
\end{enumerate}

\section*{Acknowledgments}

The authors thank Scott Pesme for pointing out the reference \cite{attouch2004singular}.

\section*{Statements and Declarations}

\paragraph*{Funding.} RB acknowledges support from the ANR and the Ministère de l’Enseignement Supérieur et de la Recherche. This work is supported by Hi!~PARIS and the ANR/France 2030 program (ANR-23-IACL-0005).

\paragraph*{Competing interests.} The authors have no competing interests to declare that are relevant to the content of this article.

\paragraph*{Data availability.} No datasets were generated or analyzed in this work; the code producing the numerical experiments is available from the corresponding author upon request.

\addcontentsline{toc}{section}{References}
\bibliographystyle{alpha}
\bibliography{bibliography}

\addtocontents{toc}{\protect\setcounter{tocdepth}{0}}
\clearpage

\appendix

\begin{center}
	\textsc{\Huge Appendix}	
\end{center}

\vspace*{1cm}

\section{Proof of Lemma~\ref{lem:mosco_recessions}}
\label{sec:proof-mosco_recessions}

We start the proof by recalling two properties of the Fenchel conjugate that we will use.

\begin{proposition}[{\cite[Prop.~13.23(iii)]{bauschke2017convex}}]
	\label{prop:translation-conjugate}
If $f: \R^d \to \R \cup \{ +\infty \}$ and $a \in \R^d$, denote by $\tau_a f: \R^d \to \R \cup \{ +\infty \}$ the translated function defined by $(\tau_a f)(w) = f(w - a)$, $w \in \R^d$. Then, for all $x \in \R^d$, we have $(\tau_a f)^*(x) = f^*(x) + \langle x, a \rangle$.
\end{proposition}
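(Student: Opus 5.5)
This is the standard translation rule for Fenchel conjugates, and I would prove it directly from the definition with a single change of variables. Everything happens in the supremum defining the conjugate, so no assumptions on $f$ are needed: it need not be convex, proper, or lower semicontinuous, and the identity holds in $\R \cup \{\pm\infty\}$.

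Fix $x \in \R^d$. By definition,
\begin{equation*}
	(\tau_a f)^*(x) = \sup_{w \in \R^d} \bigl( \langle x, w \rangle - f(w - a) \bigr) \, .
\end{equation*}
Substitute $v = w - a$. The map $w \mapsto w - a$ is a bijection of $\R^d$, so the supremum over $w$ equals the supremum over $v$:
\begin{equation*}
	(\tau_a f)^*(x) = \sup_{v \in \R^d} \bigl( \langle x, v \rangle + \langle x, a \rangle - f(v) \bigr) \, .
\end{equation*}
The term $\langle x, a \rangle$ is a finite constant independent of $v$, so it can be pulled out of the supremum. This gives $(\tau_a f)^*(x) = f^*(x) + \langle x, a \rangle$.

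The only point needing a moment's care is that last step when the supremum is infinite. If $f^*(x) = +\infty$, both sides are $+\infty$. If $f \equiv +\infty$, both sides are $-\infty$. Adding a finite constant to a supremum commutes with taking the supremum in the extended reals, so no step presents a real obstacle.
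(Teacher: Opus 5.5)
Your change-of-variables argument is correct and complete, including the extended-real edge cases. The paper gives no proof of its own and simply cites \cite[Prop.~13.23(iii)]{bauschke2017convex}; your substitution $v = w - a$ in the supremum is the standard direct verification of that identity.
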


\begin{proposition}[{\cite[Coro.~15.28]{bauschke2017convex}}]
\label{prop:conjugate_composition}
Let $f\in \Gamma_0(\R^d)$ and let $L: \R^d \to \R^d$ be a linear operator. Assume that $0 \in \ri(\dom f + \Span L)$. Then, for all $x \in \R^d$,
\begin{equation*}
	(f \circ L)^*(x) = \inf_{y \in \R^d, \,  L^\top y = x} f^*(y)   \, .
\end{equation*}
where $L^\top$ is the adjoint of $L$. Moreover, if $x \in \dom (f \circ L)^*$, then the infimum is attained.
\end{proposition}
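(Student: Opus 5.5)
We prove the formula by Fenchel--Rockafellar duality, via a perturbation (value) function. Fix $x \in \R^d$ and define, for $p \in \R^d$,
\begin{equation*}
	v(p) = \inf_{u \in \R^d} \bigl[ f(Lu + p) - \langle x , u \rangle \bigr] \, ,
\end{equation*}
so that $v(0) = -(f \circ L)^*(x)$. The function $v$ is convex, as the marginal of the jointly convex function $(u,p) \mapsto f(Lu+p) - \langle x,u\rangle$. Its domain is $\dom v = \dom f - \Span L = \dom f + \Span L$, because $\Span L$ is a subspace. The hypothesis therefore reads $0 \in \ri \dom v$. In particular $\dom f \cap \Span L \neq \emptyset$, so $f \circ L$ is proper and its conjugate is well defined.

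\textbf{Step 1: conjugate of $v$.} For $y \in \R^d$, change variables to $z = Lu + p$ in the supremum defining $v^*(y)$:
\begin{equation*}
	v^*(y) = \sup_{u,z} \bigl[ \langle y , z \rangle - f(z) + \langle x - L^\top y , u \rangle \bigr] \, .
\end{equation*}
This equals $f^*(y)$ if $L^\top y = x$ and $+\infty$ otherwise.

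\textbf{Step 2: weak duality.} For any $y$ with $L^\top y = x$ and any $u$, Fenchel--Young gives
\begin{equation*}
	f^*(y) \geq \langle y, Lu\rangle - f(Lu) = \langle x,u\rangle - f(Lu) \, .
\end{equation*}
Taking the supremum over $u$ yields $(f\circ L)^*(x) \leq \inf_{L^\top y = x} f^*(y)$. Consequently, if $(f\circ L)^*(x) = +\infty$ (equivalently $v(0) = -\infty$), both sides equal $+\infty$ and there is nothing more to prove.

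\textbf{Step 3: strong duality and attainment.} Assume now $x \in \dom (f\circ L)^*$, i.e.\ $v(0) > -\infty$; since $0 \in \dom v$, the value $v(0)$ is finite. A convex function taking the value $-\infty$ at some point equals $-\infty$ on the whole relative interior of its domain. Because $0 \in \ri\dom v$ and $v(0)$ is finite, $v$ never takes the value $-\infty$, so $v$ is proper.

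Since $0 \in \ri \dom v$, Rockafellar's Theorem~23.4 (equivalently the subdifferentiability result of \cite{bauschke2017convex} already used in the paper) gives $\partial v(0) \neq \emptyset$. Pick $\bar y \in \partial v(0)$. Then equality holds in Fenchel--Young: $v(0) + v^*(\bar y) = 0$. By Step~1, $v^*(\bar y) < +\infty$ forces $L^\top \bar y = x$, and the identity becomes
\begin{equation*}
	(f\circ L)^*(x) = -v(0) = v^*(\bar y) = f^*(\bar y) \, .
\end{equation*}
Combined with Step~2, this shows that the infimum equals $(f\circ L)^*(x)$ and is attained at $\bar y$.

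The main obstacle is Step~3. One must ensure that the relative-interior qualification really gives subdifferentiability of $v$ at $0$, which requires first ruling out the value $-\infty$ (the properness argument above). One must also check that $\dom v$ is exactly $\dom f + \Span L$, so that the hypothesis applies to $v$ as stated; this holds because $p \in \dom v$ if and only if there is some $u$ with $Lu + p \in \dom f$.
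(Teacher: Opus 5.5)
Your proof is correct and complete. The paper gives no argument of its own here: it imports \cite[Coro.~15.28]{bauschke2017convex}, where the formula is a consequence of the Fenchel--Rockafellar/Attouch--Br\'ezis duality theory developed there in Hilbert spaces. That theory works under a strong-relative-interior qualification and with $f\in\Gamma_0$.

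You instead give a direct finite-dimensional perturbation argument. The value function $v$ satisfies $\dom v=\dom f+\Span L$, so the hypothesis is exactly $0\in\ri\dom v$. Once $v(0)$ is finite, the fact that an improper convex function equals $-\infty$ on the relative interior of its domain makes $v$ proper. Subdifferentiability of proper convex functions on $\ri\dom v$ then yields some $\bar y\in\partial v(0)$. Combined with your computation of $v^*$, this gives both $L^\top\bar y=x$ and attainment.

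What this buys is a self-contained proof resting on two elementary facts. Neither fact requires $v$ to be lower semicontinuous, which matters because a marginal function need not be closed. In fact, lower semicontinuity of $f$ is never used, so your argument proves the formula for every proper convex $f$, in line with \cite[Thm.~16.3]{rockafellar1970convex}. The cited route instead covers infinite-dimensional spaces, which the paper does not need.
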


The challenge in the proof of Lemma~\ref{lem:mosco_recessions} is to handle the joint convergence of $(h^\varepsilon)^*$ to $\sigma_C$ and of $\overline{w}_0^\varepsilon$ to $\overline{w}_0$. We first isolate the second difficulty.

\begin{lem}
	\label{lem:mosco_recessions_rec}
	Let $g \in \Gamma_0(\R^d)$ and assume that $\overline{w}_0^\varepsilon \xrightarrow[\varepsilon\to 0]{} \overline{w}_0$, that $\overline{w}_0 \in \ri \dom g + \Span M$, and that, for $\varepsilon > 0$ small enough, $\overline{w}_0^\varepsilon - \overline{w}_0$ belongs to the subspace parallel to $\aff \dom g$. Define
	\begin{align*}
		&\overline{\Psi}^\varepsilon(u) = g(M^{1/2} u + \overline{w}_0^\varepsilon) \, , && \Psi(u) = g(M^{1/2} u + \overline{w}_0) \, .
	\end{align*}
	Then $\overline{\Psi}^\varepsilon \xrightarrow[\varepsilon \to 0]{\mathsf{M}} \Psi$.
\end{lem}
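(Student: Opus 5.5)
The argument works directly with sequences. Mosco convergence in $\R^d$ coincides with $\Gamma$-convergence, so it suffices to check the two inequalities of Definition~\ref{def:mosco}. Throughout, write $\delta^\varepsilon = \overline{w}_0^\varepsilon - \overline{w}_0$; by assumption $\delta^\varepsilon \to 0$, and for $\varepsilon$ small $\delta^\varepsilon$ lies in the subspace $V$ parallel to $\aff \dom g$.

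\emph{Liminf inequality.} Let $u^\varepsilon \to u$. Then $M^{1/2}u^\varepsilon + \overline{w}_0^\varepsilon \to M^{1/2}u + \overline{w}_0$. Lower semicontinuity of $g$ gives
\[
\Psi(u) \le \liminf_{\varepsilon \to 0} \overline{\Psi}^\varepsilon(u^\varepsilon).
\]
No assumption beyond $g \in \Gamma_0(\R^d)$ is used here.

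\emph{Limsup inequality.} Fix $u$. If $\Psi(u) = +\infty$, the constant sequence $u^\varepsilon = u$ works. Otherwise $z := M^{1/2}u + \overline{w}_0 \in \dom g$. The naive choice $u^\varepsilon = u$ fails, because $z + \delta^\varepsilon$ may leave $\dom g$ when $z$ lies on the relative boundary. The plan is therefore to first push $z$ into the relative interior, using directions available in $\Span M$.

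By assumption $\overline{w}_0 = r + M^{1/2}a$ with $r \in \ri \dom g$ and $a \in \R^d$; here we use $\Span M = \Span M^{1/2}$. For $t \in (0,1]$ set $y_t = (1-t)z + t r$. By the line segment principle \cite[Thm.~6.1]{rockafellar1970convex}, $y_t \in \ri \dom g$. Moreover,
\[
y_t - \overline{w}_0 = (1-t)(z - \overline{w}_0) + t(r - \overline{w}_0) = M^{1/2}\bigl((1-t)u - t a\bigr).
\]
Hence $y_t = M^{1/2}u_t + \overline{w}_0$ with $u_t := (1-t)u - ta$, and $u_t \to u$ as $t \to 0$.

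Now fix $t$. Since $\delta^\varepsilon \in V$ and $\delta^\varepsilon \to 0$, the point $y_t + \delta^\varepsilon$ stays in $\aff \dom g$ and converges to $y_t \in \ri \dom g$. It therefore lies in $\ri \dom g$ for $\varepsilon$ small. A convex function is continuous on the relative interior of its domain, relative to the affine hull \cite[Thm.~10.1]{rockafellar1970convex}. Hence
\[
\overline{\Psi}^\varepsilon(u_t) = g(y_t + \delta^\varepsilon) \xrightarrow[\varepsilon \to 0]{} g(y_t) \le (1-t)\,g(z) + t\,g(r).
\]

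\emph{Diagonal step and main obstacle.} It remains to let $t \to 0$ jointly with $\varepsilon$. Choose $t(\varepsilon) \to 0$ slowly enough that $\lvert \overline{\Psi}^\varepsilon(u_{t(\varepsilon)}) - g(y_{t(\varepsilon)}) \rvert \to 0$; this is the standard diagonal extraction, e.g.\ pick $t = 1/k$ on successive ranges of $\varepsilon$. Setting $u^\varepsilon = u_{t(\varepsilon)} \to u$ then gives
\[
\limsup_{\varepsilon \to 0} \overline{\Psi}^\varepsilon(u^\varepsilon) \le \limsup_{\varepsilon \to 0} \bigl[(1-t(\varepsilon))\,g(z) + t(\varepsilon)\,g(r)\bigr] = g(z) = \Psi(u).
\]
The main obstacle is the limsup step: the perturbation $\delta^\varepsilon$ cannot in general be absorbed by changing $u$, since it need not lie in $\Span M$. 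The condition $\overline{w}_0 \in \ri \dom g + \Span M$ is what lets us move into $\ri \dom g$ along $\Span M$. The condition $\delta^\varepsilon \in V$ is what keeps the perturbed point inside $\aff \dom g$, where relative continuity applies.
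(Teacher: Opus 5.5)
Your proof is correct and follows essentially the paper's argument: lower semicontinuity for the liminf, and for the limsup a convex combination with a point of $\ri \dom g$ reached along $\Span M$, combined with relative continuity of $g$ on $\ri \dom g$ and the parallel-subspace assumption that keeps the perturbed point in $\aff \dom g$. The only difference is bookkeeping: the paper fixes the rate $t_\varepsilon = \sqrt{\|\overline{w}_0^\varepsilon - \overline{w}_0\|}$ and attaches the vanishing perturbation $(\overline{w}_0^\varepsilon - \overline{w}_0)/t_\varepsilon$ to the single interior point, so it needs continuity only at that point and no diagonal extraction; you instead attach $\delta^\varepsilon$ to the moving points $y_t$ (which lie in $\ri \dom g$ by the line segment principle) and then diagonalize in $t$.
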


\begin{proof}
	Following Definition~\ref{def:mosco}, we show the $\liminf$ and $\limsup$ inequalities.
	\begin{itemize}
		\item ($\liminf$ inequality) Consider a sequence $z_\varepsilon \xrightarrow[\varepsilon\to 0]{} z$. Then $M^{1/2} z_\varepsilon + \overline{w}_0^\varepsilon \xrightarrow[\varepsilon\to 0]{} M^{1/2} z + \overline{w}_0$. Since $g$ is lower semicontinuous, we have
		\begin{equation*}
			\liminf_{\varepsilon \to 0} \overline{\Psi}^\varepsilon(z_\varepsilon) = \liminf_{\varepsilon \to 0} g(M^{1/2} z_\varepsilon + \overline{w}_0^\varepsilon) \geq g(M^{1/2} z + \overline{w}_0) = \Psi(z) \, .
		\end{equation*}
		\item ($\limsup$ inequality)
		Let $z \in \R^d$. We seek $z_\varepsilon \xrightarrow[\varepsilon\to0]{} z$ such that
		\begin{equation*}
			\limsup_{\varepsilon \to 0} g(M^{1/2} z_\varepsilon + \overline{w}_0^\varepsilon) \leq g(M^{1/2} z + \overline{w}_0) \, .
		\end{equation*}
		Denote $u = M^{1/2} z + \overline{w}_0$. As $\overline{w}_0 \in \ri \dom g + \Span M$, there exists $\overline{z} \in \R^d$ such that $\overline{u} := M^{1/2} \overline{z} + \overline{w}_0 \in \ri \dom g$. Consider $t_\varepsilon \geq 0$ such that $t_\varepsilon \xrightarrow[\varepsilon\to 0]{} 0$ and $\overline{w}_0^\varepsilon - \overline{w}_0 = o(t_\varepsilon)$ as $\varepsilon \to 0$. For instance, one can take $t_\varepsilon = \sqrt{\|\overline{w}_0^\varepsilon - \overline{w}_0\|}$. We then consider the recovery sequence $z_\varepsilon = (1-t_\varepsilon) z + t_\varepsilon \overline{z}$ and obtain
		\begin{align*}
			M^{1/2} z_\varepsilon + \overline{w}_0^\varepsilon &= (1-t_\varepsilon) (M^{1/2} z + \overline{w}_0) + t_\varepsilon (M^{1/2} \overline{z} + \overline{w}_0) + \overline{w}_0^\varepsilon - \overline{w}_0 \\
			&= (1-t_\varepsilon) u + t_\varepsilon \widetilde{u}_\varepsilon \, ,
		\end{align*} 
		where $\widetilde{u}_\varepsilon = \overline{u} + \frac{1}{t_\varepsilon}(\overline{w}_0^\varepsilon - \overline{w}_0)$. 
		Note that $\overline{u} \in \ri \dom g$, $\frac{1}{t_\varepsilon} (\overline{w}_0^\varepsilon - \overline{w}_0) \xrightarrow[\varepsilon\to 0]{} 0$ and for $\varepsilon > 0$ small enough, $\overline{w}_0^\varepsilon - \overline{w}_0$ belongs to the subspace parallel to $\aff \dom g$, hence $\widetilde{u}_\varepsilon \in \ri \dom g$ for $\varepsilon > 0$ small enough, and $\widetilde{u}_\varepsilon \xrightarrow[\varepsilon\to 0]{} \overline{u}$. As the restriction of $g$ to $\ri \dom g$ is continuous \cite[Thm.~10.1]{rockafellar1970convex}, $g(\widetilde{u}_\varepsilon) \xrightarrow[\varepsilon\to 0]{} g(\overline{u})$. 

		Thus, for $\varepsilon > 0$ small enough,
		\begin{align*}
			g(M^{1/2} z_\varepsilon + \overline{w}_0^\varepsilon) &= g((1-t_\varepsilon) u + t_\varepsilon \widetilde{u}_\varepsilon) \\
			&\leq (1-t_\varepsilon) g(u) + t_\varepsilon g(\widetilde{u}_\varepsilon) \xrightarrow[\varepsilon\to 0]{} g(u) = g(M^{1/2} z + \overline{w}_0) \, .
		\end{align*}
		This proves the desired $\limsup$ inequality.
	\end{itemize}
\end{proof}

We now turn to the proof of Lemma~\ref{lem:mosco_recessions}. Of course, the second part $-\langle {M^\dagger}^{1/2} q,u \rangle$ of the functions $\varphi^\varepsilon$ and $\varphi$ is independent of $\varepsilon$ and thus does not impact the Mosco convergence. Hence, we only need to prove the following lemma.

\begin{lem}
	Define 
	\begin{align*}
		\Psi^\varepsilon(u) = (h^\varepsilon)^*(M^{1/2} u + \overline{w}_0^\varepsilon) \, , && \Psi(u) = \sigma_C(M^{1/2} u + \overline{w}_0) \, .
	\end{align*}
	Under the assumptions of Theorem~\ref{thm:main}, we have $\Psi^\varepsilon \xrightarrow[\varepsilon\to 0]{\mathsf{M}} \Psi$.
\end{lem}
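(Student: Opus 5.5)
We prove the liminf and limsup inequalities of Definition~\ref{def:mosco} separately. The liminf part does not need Lemma~\ref{lem:mosco_recessions_rec}; the limsup part does, through a two-step approximation combined with a diagonal argument.

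\textbf{Liminf inequality.} Let $z_\varepsilon \to z$. Then $v_\varepsilon := M^{1/2} z_\varepsilon + \overline{w}_0^\varepsilon \to M^{1/2} z + \overline{w}_0$. By Example~\ref{ex:mosco-conjugate} (applied with $\frac{1}{\mu^\varepsilon}\to 0$ in place of $\varepsilon$), $(h^\varepsilon)^* \xrightarrow{\mathsf{M}} \sigma_C$. The liminf half of this Mosco convergence therefore gives
\[
\liminf_{\varepsilon\to 0} \Psi^\varepsilon(z_\varepsilon) = \liminf_{\varepsilon\to 0} (h^\varepsilon)^*(v_\varepsilon) \geq \sigma_C(M^{1/2}z+\overline{w}_0) = \Psi(z).
\]

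\textbf{Limsup inequality.} Fix $z$ with $\Psi(z)<\infty$; otherwise there is nothing to prove. We compare $\Psi^\varepsilon$ with the intermediate functions $\overline{\Psi}^\varepsilon(u) = \sigma_C(M^{1/2}u+\overline{w}_0^\varepsilon)$.

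Lemma~\ref{lem:mosco_recessions_rec} applies to $g=\sigma_C$, using assumptions \ref{it:main-1}--\ref{it:main-2}. It gives $\overline{\Psi}^\varepsilon \xrightarrow{\mathsf{M}} \Psi$, so there are $z_\varepsilon\to z$ with $\limsup_{\varepsilon\to 0} \sigma_C(M^{1/2}z_\varepsilon+\overline{w}_0^\varepsilon)\le \Psi(z)$.

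The key observation is the pointwise bound $(h^\varepsilon)^* \le \sigma_C + c_\varepsilon$ with $c_\varepsilon\to 0$. To see it, write
\[
(h^\varepsilon)^*(w)=\sup_{x\in C}\Big(\langle w,x\rangle - \tfrac{1}{\mu^\varepsilon}h(x)\Big).
\]
Since $h\in\Gamma_0$, it admits an affine minorant: $h(x)\ge \langle a,x\rangle + b$. Hence
\[
(h^\varepsilon)^*(w)\le \sup_{x\in C}\Big(\langle w - a/\mu^\varepsilon, x\rangle\Big) - \frac{b}{\mu^\varepsilon} = \sigma_C(w-a/\mu^\varepsilon) - \frac{b}{\mu^\varepsilon}.
\]
The shift $a/\mu^\varepsilon$ inside $\sigma_C$ is the obstacle. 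To absorb it, I would include the shift in the recovery sequence. That is, choose $z'_\varepsilon \to z$ with
\[
\limsup_{\varepsilon\to 0} \sigma_C\big(M^{1/2}z'_\varepsilon+\overline{w}_0^\varepsilon - a/\mu^\varepsilon\big)\le\Psi(z),
\]
which amounts to applying Lemma~\ref{lem:mosco_recessions_rec} with the perturbed initializations $\overline{w}_0^\varepsilon - a/\mu^\varepsilon \to \overline{w}_0$. Then
\[
\limsup_{\varepsilon\to 0} \Psi^\varepsilon(z'_\varepsilon)\le \limsup_{\varepsilon\to 0}\Big(\sigma_C(\cdots) - \frac{b}{\mu^\varepsilon}\Big)\le \Psi(z).
\]

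\textbf{Main obstacle.} Lemma~\ref{lem:mosco_recessions_rec} requires the perturbation $\overline{w}_0^\varepsilon - a/\mu^\varepsilon - \overline{w}_0$ to lie in the direction space of $\aff\dom\sigma_C$, and $a$ need not lie there. I would handle this in two ways.

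First, the slope $a$ of the affine minorant may be chosen freely in $\dom h^*$, since $h\ge\langle a,\cdot\rangle - h^*(a)$ for every such $a$. If some admissible $a$ can be taken in $\ri\dom h^*$ and projected appropriately, the requirement is met.

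Second, I would use the structure of $\sigma_C$ directly. Let $K$ be the direction space of $\aff\dom\sigma_C$. Then $K^\perp$ is the lineality direction of $C$ (recession in both directions), along which $\sigma_C$ is affine. Consequently $\sigma_C(w+k)=\sigma_C(w)+\langle k,x_0\rangle$ for $k\in K^\perp$ and any fixed $x_0\in C$. I would decompose $a = a_K + a_{K^\perp}$. The component $a_{K^\perp}$ contributes only the term $\langle a_{K^\perp},x_0\rangle/\mu^\varepsilon\to 0$. The component $a_K$ is admissible in Lemma~\ref{lem:mosco_recessions_rec}.

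Verifying this affine-along-$K^\perp$ identity, when $\dom\sigma_C$ is not full-dimensional, is the step I expect to require the most care.
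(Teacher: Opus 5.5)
Your liminf step is correct. Your limsup strategy is also sound: take a recovery sequence for $\overline{\Psi}^\varepsilon$ from Lemma~\ref{lem:mosco_recessions_rec} and combine it with a pointwise upper bound of $(h^\varepsilon)^*$ in terms of $\sigma_C$. As written, however, the proof is not closed. The obstacle you identify is left unresolved, and your second proposed resolution rests on a false identity. $K^\perp$ is indeed the lineality space of $C$, but $\sigma_C$ is not affine along it. Since $\dom\sigma_C\subset K$, for any $w\in\dom\sigma_C$ and $k\in K^\perp\setminus\{0\}$ we have $w+k\notin K$. Hence $\sigma_C(w+k)=+\infty$, whereas $\sigma_C(w)+\langle k,x_0\rangle$ is finite. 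For instance, with $C=\{x\in\R^2 \mid x_2\geq 0\}$ one has $\dom\sigma_C=\{0\}\times\R_{\leq 0}$ and $K^\perp=\R\times\{0\}$. The subspace along which $\sigma_C$ is affine is $(C-C)^\perp$, which is a different subspace. Consequently, for a slope $a\notin K$ your bound $\sigma_C(w-a/\mu^\varepsilon)-b/\mu^\varepsilon$ equals $+\infty$ at every $w\in K$, and the component $a_{K^\perp}$ cannot be split off as you propose.

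The missing observation is that there is no obstacle at all. The standing assumption $\dom h^*=\R^d$ of Section~\ref{sec:limiting} gives $h^*(0)<+\infty$, that is, $h\geq -h^*(0)$. So $a=0$ is an admissible slope; this is your first route, made concrete. It yields $(h^\varepsilon)^*\leq\sigma_C+h^*(0)/\mu^\varepsilon$. Hence $\Psi^\varepsilon(z_\varepsilon)\leq\overline{\Psi}^\varepsilon(z_\varepsilon)+h^*(0)/\mu^\varepsilon$ along the recovery sequence $z_\varepsilon$ of $\overline{\Psi}^\varepsilon$, and the limsup inequality follows with no perturbed initializations.

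With this fix, your proof is a valid primal counterpart of the paper's argument. The paper instead proves $(\Psi^\varepsilon)^*\xrightarrow{\mathsf{M}}\Psi^*$, which requires computing three conjugates through Proposition~\ref{prop:conjugate_composition} and checking its qualification condition each time. Its conjugate-side liminf step is exactly the dual of your bound: $(\Psi^\varepsilon)^*\geq(\overline{\Psi}^\varepsilon)^*-h^*(0)/\mu^\varepsilon$. Its conjugate-side limsup step (a constant recovery sequence with $\frac{1}{\mu^\varepsilon}h(y_*)\to 0$ for $y_*\in C$) corresponds to your primal liminf. Your route is shorter and avoids the conjugate formulas entirely.
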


\begin{proof}
As mentioned above, the proof strategy consists in separating the convergence of $(h^\varepsilon)^*$ to $\sigma_C$ and of $\overline{w}^\varepsilon_0$ to $\overline{w}_0$. Accordingly, define
\begin{equation*}
	\overline{\Psi}^\varepsilon(u) = \sigma_C(M^{1/2} u + \overline{w}_0^\varepsilon) \, . 
\end{equation*}
We apply Lemma~\ref{lem:mosco_recessions_rec} with $g = \sigma_C$, using assumptions~\ref{it:main-1} and~\ref{it:main-2} of Theorem~\ref{thm:main}. Note that, in this case, $0 \in \dom \sigma_C = \dom g$, thus $\aff \dom g$ is a linear subspace. As a consequence, the assumptions of the lemma are satisfied, and we obtain $\overline{\Psi}^\varepsilon \xrightarrow[\varepsilon\to 0]{\mathsf{M}} {\Psi}$.

To finish the proof of $\Psi^\varepsilon \xrightarrow[\varepsilon\to 0]{\mathsf{M}} \Psi$, we prove the equivalent statement $(\Psi^\varepsilon)^* \xrightarrow[\varepsilon\to 0]{\mathsf{M}} \Psi^*$; see Proposition~\ref{prop:mosco-fenchel}. We will use that $(\overline{\Psi}^\varepsilon)^* \xrightarrow[\varepsilon\to 0]{\mathsf{M}} \Psi^*$. We thus compute the conjugate functions $(\Psi^\varepsilon)^*$, $(\overline{\Psi}^\varepsilon)^*$, and $\Psi^*$.

We start with $(\Psi^\varepsilon)^*$. Recall the notation for the translated of a function: $(\tau_a f)(w) = f(w - a)$. Then, we can write $\Psi^\varepsilon = \left(\tau_{-\overline{w}_0^\varepsilon} \left[(h^\varepsilon)^*\right]\right) \circ M^{1/2}$. We then use Proposition~\ref{prop:conjugate_composition} to compute the conjugate of such a composition of a convex function and of a linear operator. We check the assumption. We have 
\begin{align*}
	\ri\left(\dom \tau_{-\overline{w}_0^\varepsilon}(h^\varepsilon)^*+\Span M^{1/2}\right) &= \ri\left(\dom (h^\varepsilon)^* - \overline{w}_0^\varepsilon + \Span M^{1/2}\right) \\
	&= \ri\left(\frac{1}{\mu^\varepsilon} \dom h^* - \overline{w}_0^\varepsilon + \Span M^{1/2}\right) \, .
\end{align*}
By the standing assumption in Section~\ref{sec:limiting}, $\dom h^* = \R^d$. Hence the relative interior above is all of $\R^d$ and contains $0$, so Proposition~\ref{prop:conjugate_composition} applies. We obtain
\begin{align*}
	(\Psi^\varepsilon)^*(z) = \inf_{y \in \R^d, \,  M^{1/2}y=z} \left(\tau_{-\overline{w}_0^\varepsilon} \left[(h^\varepsilon)^*\right]\right)^*(y) \, . 
\end{align*}
We then apply Proposition~\ref{prop:translation-conjugate}: 
\begin{align}
	\label{eq:conjugate-psi-eps}
	\begin{split}
	(\Psi^\varepsilon)^*(z) &= \inf_{y \in \R^d, \,  M^{1/2}y=z} \left\{h^\varepsilon(y)-\langle \overline{w}_0^\varepsilon, y \rangle\right\} = \inf_{y \in \R^d, \,  M^{1/2}y=z} \left\{\frac{1}{\mu^\varepsilon}h(y)-\langle \overline{w}_0^\varepsilon, y \rangle\right\} \, .
	\end{split}
\end{align}

We continue with the computation of the conjugate $\Psi^*$. As above, we write $\Psi = \tau_{-\overline{w}_0}\sigma_C \circ M^{1/2}$ and use Proposition~\ref{prop:conjugate_composition}. We check the assumption: by \cite[Coro.~6.6.2]{rockafellar1970convex}, 
\begin{align*}
	\ri(\dom \tau_{-\overline{w}_0} \sigma_C + \Span M^{1/2}) &= \ri(\dom \tau_{-\overline{w}_0} \sigma_C) + \ri(\Span M^{1/2}) \\
	&= \ri(\dom \sigma_C - \overline{w}_0) + \Span M^{1/2} \\
	&= \ri(\dom \sigma_C) - \overline{w}_0 + \Span M \\
	&\ni 0 \, , 
\end{align*}
by assumption~\ref{it:main-1} of Theorem~\ref{thm:main}. We can thus apply Proposition~\ref{prop:conjugate_composition}. We obtain
\begin{align*}
	\Psi^*(z) = \inf_{y \in \R^d, \,  M^{1/2}y=z} \left[\tau_{-\overline{w}_0}\sigma_C\right]^*(y) \, .
\end{align*}
We then apply Proposition~\ref{prop:translation-conjugate}:
\begin{align}
	\label{eq:conjugate-psi}
	\Psi^*(z) &= \inf_{y \in \R^d, \,  M^{1/2}y=z} \left\{\sigma_C^*(y) - \langle \overline{w}_0, y \rangle \right\}  = \inf_{y \in \R^d, \,  M^{1/2}y=z} \left\{\iota_{C}(y) - \langle \overline{w}_0, y \rangle \right\} \, . 
\end{align}
Moreover, if $z \in \dom \Psi^*$, then the infimum is attained.

Finally, we compute the conjugate $(\overline{\Psi}^\varepsilon)^*$. The computation is very similar to the one for $\Psi^*$. As above, we write $\overline{\Psi}^\varepsilon = \tau_{-\overline{w}_0^\varepsilon}\sigma_C \circ M^{1/2}$ and use Proposition~\ref{prop:conjugate_composition}. We check the assumption: by \cite[Coro.~6.6.2]{rockafellar1970convex}, 
\begin{align*}
	\ri(\dom \tau_{-\overline{w}_0^\varepsilon} \sigma_C + \Span M^{1/2}) &= \ri(\dom \tau_{-\overline{w}_0^\varepsilon} \sigma_C) + \ri(\Span M^{1/2}) \\
	&= \ri(\dom \sigma_C - \overline{w}_0^\varepsilon) + \Span M^{1/2} \\
	&= \ri(\dom \sigma_C) - \overline{w}_0^\varepsilon + \Span M \, .
\end{align*}
By assumptions~\ref{it:main-1} and~\ref{it:main-2} of Theorem~\ref{thm:main}, for $\varepsilon$ small enough, $\overline{w}_0^\varepsilon \in \ri \dom \sigma_C + \Span M$. Thus $0 \in \ri(\dom \tau_{-\overline{w}_0^\varepsilon} \sigma_C + \Span M^{1/2})$. We can apply Proposition~\ref{prop:conjugate_composition}. With the same computations as above, we obtain
\begin{align}
	\label{eq:conjugate-overline-psi-eps}
	(\overline{\Psi}^\varepsilon)^*(z) &= \inf_{y \in \R^d, \,  M^{1/2}y=z} \left\{\iota_{C}(y) - \langle \overline{w}_0^\varepsilon, y \rangle \right\} \, . 
\end{align}
Now that we have expressed the conjugates of interest, we prove that $(\Psi^\varepsilon)^* \xrightarrow[\varepsilon\to 0]{\mathsf{M}} \Psi^*$. Following Definition~\ref{def:mosco}, we show the $\liminf$ and $\limsup$ inequalities.
\begin{itemize}
	\item ($\liminf$ inequality) Let $z^\varepsilon \xrightarrow[\varepsilon\to 0]{} z$. By Eq.~\eqref{eq:conjugate-psi-eps}, 
	\begin{align}
		\label{eq:aux-1}
		\begin{split}
			(\Psi^\varepsilon)^*(z^\varepsilon) 
			&= \inf_{y \in \R^d, \,  M^{1/2}y=z^\varepsilon} \left\{\frac{1}{\mu^\varepsilon}h(y)-\langle \overline{w}_0^\varepsilon, y \rangle\right\} \\
	&= \inf_{\substack{y \in \R^d, \,  M^{1/2}y=z^\varepsilon \\ y \in C}} \left\{\frac{1}{\mu^\varepsilon}h(y)-\langle \overline{w}_0^\varepsilon, y \rangle\right\} \\
	&\geq \frac{1}{\mu^\varepsilon} \inf_{y \in \R^d} h(y) + \inf_{\substack{y \in \R^d, \,  M^{1/2}y=z^\varepsilon \\ y \in C}} \left\{-\langle \overline{w}_0^\varepsilon, y \rangle\right\} \\
	&= -\frac{1}{\mu^\varepsilon} h^*(0) + (\overline{\Psi}^\varepsilon)^*(z^\varepsilon) \, ,
		\end{split}
	\end{align}
	where in the last equality we use Eq.~\eqref{eq:conjugate-overline-psi-eps}. We have $h^*(0) < \infty$ by the standing assumption $\dom h^*=\R^d$, and by Proposition~\ref{prop:mosco-fenchel} applied to the Mosco convergence $\overline{\Psi}^\varepsilon \xrightarrow[\varepsilon\to 0]{\mathsf{M}} \Psi$,
	\begin{equation*}
		\liminf_{\varepsilon \to 0} (\overline{\Psi}^\varepsilon)^*(z^\varepsilon) \geq \Psi^*(z) \, . 
	\end{equation*}
	Thus taking the $\liminf$ in Eq.~\eqref{eq:aux-1}, we obtain 
	\begin{equation*}
		\liminf_{\varepsilon\to 0} (\Psi^\varepsilon)^*(z^\varepsilon) \geq \Psi^*(z) \, . 
	\end{equation*}
	\item ($\limsup$ inequality) Let $z \in \R^d$. If $\Psi^*(z) = +\infty$, there is nothing to prove. We thus now assume that $z \in \dom \Psi^*$. In this case, the infimum in Eq.~\eqref{eq:conjugate-psi} is attained: there exists $y_* \in \R^d$ such that $M^{1/2}y_* = z$ and $\Psi^*(z) = \iota_{C}(y_*) - \langle \overline{w}_0, y_* \rangle$. As $\Psi^*(z) < \infty$, we actually have $y_* \in C$ and $\Psi^*(z) = - \langle \overline{w}_0, y_* \rangle$. 
	
	We choose the constant recovery sequence $z^\varepsilon = z$. By Eq.~\eqref{eq:conjugate-psi-eps},
	\begin{align*}
		(\Psi^\varepsilon)^*(z^\varepsilon) &= (\Psi^\varepsilon)^*(z) = \inf_{y \in \R^d, \,  M^{1/2}y=z} \left\{\frac{1}{\mu^\varepsilon}h(y)-\langle \overline{w}_0^\varepsilon, y \rangle\right\} \leq \frac{1}{\mu^\varepsilon}h(y_*) - \langle \overline{w}_0^\varepsilon, y_* \rangle \, .
	\end{align*}
	As $y_* \in C$, $h(y_*) < \infty$, thus we obtain, taking the $\limsup$,
	\begin{equation*}
		\limsup_{\varepsilon \to 0} (\Psi^\varepsilon)^*(z^\varepsilon) \leq - \langle \overline{w}_0, y_* \rangle = \Psi^*(z) \, . 
	\end{equation*}
\end{itemize}
\end{proof} 

\section{Proof of Theorem~\ref{thm:dln-asym}}
\label{sec:proof-dln-asym}

Previous works have observed that the DLN dynamics \eqref{eq:dln-asymmetric} induce a mirror flow in the primal variable $x$~\cite{woodworth2020kernel,pesme2023saddle}. However, the mirror potential depends on $u^\varepsilon_0$ and $v^\varepsilon_0$, and thus on $\varepsilon$. Moreover, the domain of the mirror potential is the full space $\R^d$, so there is no boundary near which to initialize. Consequently, this perspective is not suitable for applying Theorem~\ref{thm:main}.

Instead, we use a reduction of the asymmetric case to the symmetric case from~\cite{berthier2025diagonal}. We recall the main lines of the derivation. The intuition is that the asymmetric dynamics induce symmetric dynamics in two variables $x^\pos$ and $x^\neg$ such that $x = x^\pos - x^\neg$. The loss of this lifted mirror flow is $\widetilde{\ell}(x^\pos, x^\neg) = \ell(x^\pos - x^\neg)$. Indeed, consider
\begin{align*}
	&x^\pos = \left(\frac{u+v}{2}\right)^2 \, , && x^\neg = \left(\frac{u-v}{2}\right)^2 \, .
\end{align*}
Then $x = u \circ v = x^\pos - x^\neg$. Moreover, 
\begin{align}
	\label{eq:dyn-xpos}
	\begin{split}
	\frac{\diff x^\pos}{\diff t} &= \left(\frac{u+v}{2}\right) \circ \left(\frac{\diff u}{\diff t} + \frac{\diff v}{\diff t}\right) \\
	&= -\left(\frac{u+v}{2}\right) \circ \left(\nabla_u\left(\ell(u\circ v)\right) + \nabla_v\left(\ell(u\circ v)\right)\right) \\
	&= -\left(\frac{u+v}{2}\right) \circ (u+v) \circ \nabla \ell(u\circ v) \\
	&= -2 x^\pos \circ \nabla \ell(x^\pos - x^\neg) \\
	&= -2 x^\pos \circ \nabla_{x^\pos} \widetilde{\ell}(x^\pos, x^\neg) \, .
	\end{split}
\end{align}
A similar computation gives 
\begin{align}
	\label{eq:dyn-xneg}
	\begin{split}
	\frac{\diff x^\neg}{\diff t} &= 2 x^\neg \circ \nabla \ell(x^\pos - x^\neg) \\
	&= -2 x^\neg \circ \nabla_{x^\neg} \widetilde{\ell}(x^\pos, x^\neg) \, .
	\end{split}
\end{align}
Compare Eqs.~\eqref{eq:dyn-xpos} and~\eqref{eq:dyn-xneg} with Eq.~\eqref{eq:dln-symmetric-x}. These equations describe the dynamics of a symmetric DLN, up to a factor $2$. Stated differently, $(x^\pos, x^\neg)$ is the primal variable of a mirror flow with loss $\widetilde{\ell}(x^\pos, x^\neg)$ and entropic mirror potential
\begin{equation*}
	\widetilde{h}(x^\pos, x^\neg) = \begin{cases}
		\frac{1}{2} \sum_{i=1}^d \left(x^\pos_i \log x^\pos_i - x^\pos_i + x^\neg_i \log x^\neg_i - x^\neg_i\right) & \text{if } x^\pos, x^\neg \geq 0 \, , \\
		+\infty & \text{otherwise} \, .
	\end{cases}
\end{equation*} 
For later use, the computation above also gives
\begin{align}
	\label{eq:cst-product}
	\begin{split}
	\frac{\diff(x^\pos\circ x^\neg)}{\diff t} &= \frac{\diff x^\pos}{\diff t} \circ x^\neg + x^\pos \circ \frac{\diff x^\neg}{\diff t} \\
	&= -2 x^\pos \circ \nabla \ell(x^\pos - x^\neg) \circ x^\neg + 2 x^\pos \circ x^\neg \circ \nabla \ell(x^\pos - x^\neg) = 0 \, .
	\end{split}
\end{align}
Thus $x^\pos \circ x^\neg$ is a constant of the dynamics.

We apply the result of Section~\ref{sec:limiting} to the mirror flow exhibited above. We now write the dependence of $u, v, x, x^\pos, x^\neg$ on $\varepsilon$. We denote by $(w^{\pos, \varepsilon}, w^{\neg, \varepsilon})$ the dual variable. We have
\begin{equation*}
	\left(w^{\pos, \varepsilon}_0, w^{\neg, \varepsilon}_0\right) = \left(\frac{1}{2} \log x_0^{\pos, \varepsilon}, \frac{1}{2} \log x_0^{\neg, \varepsilon}\right)
\end{equation*}
and by assumption
\begin{align*}
	\left\Vert \left(w^{\pos, \varepsilon}_0, w^{\neg, \varepsilon}_0\right)\right\Vert &= \left(\frac{1}{4} \sum_{i=1}^{d} \left[\log^2 x_{0,i}^{\pos, \varepsilon} + \log^2 x_{0,i}^{\neg, \varepsilon}\right]\right)^{1/2} \\
	&= \left(\sum_{i=1}^d \left[\log^2 \left\vert \frac{u^\varepsilon_{0,i} + v^\varepsilon_{0,i}}{2}\right\vert + \log^2 \left\vert \frac{u^\varepsilon_{0,i} - v^\varepsilon_{0,i}}{2} \right\vert\right]\right)^{1/2} \\
	&= \mu^\varepsilon \xrightarrow[\varepsilon \to 0]{} +\infty \, .
\end{align*}
We denote $\overline{x}^{\pos, \varepsilon}(s) = x^{\pos, \varepsilon}(s \mu^\varepsilon)$ and $\overline{x}^{\neg, \varepsilon}(s) = x^{\neg, \varepsilon}(s \mu^\varepsilon)$. By assumption, 
\begin{align*}
	\left(\overline{w}_0^{\pos, \varepsilon}, \overline{w}_0^{\neg, \varepsilon}\right) &= \frac{1}{\mu^\varepsilon}\left(w_0^{\pos,\varepsilon},w_0^{\neg,\varepsilon}\right) = \left(\frac{1}{\mu^\varepsilon} \log \left\vert \frac{u^\varepsilon_{0} + v^\varepsilon_{0}}{2}\right\vert, \frac{1}{\mu^\varepsilon} \log \left\vert \frac{u^\varepsilon_{0} - v^\varepsilon_{0}}{2} \right\vert\right) \\
	&\xrightarrow[\varepsilon \to 0]{} (\overline{w}_0^{\pos}, \overline{w}_0^{\neg}) \in \R^{2d}_{\leq 0} = \dom \partial \sigma_{\R^{2d}_{\geq 0}} \, . 
\end{align*}
Moreover, $\widetilde{\ell}$ is a quadratic function with Hessian $\widetilde{M} = \begin{pmatrix} M & -M \\ -M & M \end{pmatrix}$, thus we need to check that $\begin{pmatrix}\overline{w}_0^{\pos} \\ \overline{w}_0^{\neg}\end{pmatrix} \in \R^{2d}_{<0} + \Span \begin{pmatrix} M & -M \\ -M & M \end{pmatrix}$. By assumption, there exists $a = Mb \in \Span M$ such that $\overline{w}_0^{\pos} + a < 0$ and $\overline{w}_0^{\neg} - a < 0$. We then have
\begin{align*}
	\begin{pmatrix}\overline{w}_0^{\pos} \\ \overline{w}_0^{\neg}\end{pmatrix} &= \begin{pmatrix}\overline{w}_0^{\pos}+a \\ \overline{w}_0^{\neg}-a \end{pmatrix} + \begin{pmatrix}-a \\ a \end{pmatrix} = \begin{pmatrix}\overline{w}_0^{\pos}+a \\ \overline{w}_0^{\neg}-a\end{pmatrix} + \begin{pmatrix} M & -M \\ -M & M \end{pmatrix} \begin{pmatrix} 0 \\ b \end{pmatrix} \\
	&\in \R^{2d}_{<0} + \Span \begin{pmatrix} M & -M \\ -M & M \end{pmatrix} \, .
\end{align*}
Finally, note that $\aff \dom \sigma_{\R^{2d}_{\geq 0}} = \R^{2d}$. Thus conditions~\ref{it:main-1} and~\ref{it:main-2} are satisfied, and Theorem~\ref{thm:main} applies.

Let $(\overline{w}^\pos, \overline{w}^\neg)$ denote the unique Lipschitz solution of the mirror flow with mirror potential $\iota_{\R^{2d}_{\geq 0}}$ and loss $\widetilde{\ell}$ such that $(\overline{w}^\pos(0), \overline{w}^\neg(0)) = (\overline{w}_0^\pos, \overline{w}_0^\neg)$. Let $(\overline{x}^\pos(s), \overline{x}^\neg(s))$ be an associated primal variable. Let $S > 0$. Then:
\begin{itemize}
	\item[(b)] We have the convergences 
	\begin{align}
		\nabla \widetilde{\ell} \left(\overline{x}^{\pos,\varepsilon}(s), \overline{x}^{\neg,\varepsilon}(s)\right) &\xrightarrow[\varepsilon \to 0]{} \nabla \widetilde{\ell} \left(\overline{x}^{\pos}(s), \overline{x}^{\neg}(s)\right) \, , &&\text{in }L^2([0,S], \R^{2d}, s\diff s) \, , \label{eq:lifted-grad}\\
		\widetilde{\ell}\left(\overline{x}^{\pos,\varepsilon}(s), \overline{x}^{\neg,\varepsilon}(s)\right) &\xrightarrow[\varepsilon \to 0]{}  \widetilde{\ell} \left(\overline{x}^{\pos}(s), \overline{x}^{\neg}(s)\right) \, , &&\text{in }L^1([0,S], \R, s\diff s) \, , \label{eq:lifted-values}
	\end{align}
	where 
	\begin{equation*}
		\widetilde{\ell}(\overline{x}^\pos(s), \overline{x}^\neg(s)) = \min \left\{\widetilde{\ell}(y^\pos, y^\neg) \,\middle|\, \substack{y^\pos, y^\neg \in \R^d_{\geq 0} , \\ \langle \overline{w}^\pos(s), y^\pos \rangle = \langle \overline{w}^\neg(s), y^\neg \rangle = 0 }\right\} \, .
	\end{equation*}
	\item[(c)] Let $(\overline{y}^\pos,\overline{y}^\neg):[0,S]\to\R^{2d}$ be an accumulation point of $(\overline{x}^{\pos,\varepsilon},\overline{x}^{\neg,\varepsilon})$ in the sense of essential pointwise convergence. Then, for a.e.~$s \in [0,S]$,
	\begin{equation}
		\label{eq:lifted-acc}
		(\overline{y}^\pos(s), \overline{y}^\neg(s)) \in \Argmin \left\{\widetilde{\ell}(y^\pos, y^\neg) \,\middle|\, \substack{y^\pos, y^\neg \in \R^d_{\geq 0} , \\ \langle \overline{w}^\pos(s), y^\pos \rangle = \langle \overline{w}^\neg(s), y^\neg \rangle = 0 }\right\} \, .
	\end{equation}
\end{itemize}
We need to show that these consequences of Theorem~\ref{thm:main} imply Theorem~\ref{thm:dln-asym}. Denote $\widetilde{w} = \frac{\overline{w}^\pos - \overline{w}^\neg}{2}$ and $\widetilde{x} = \overline{x}^\pos - \overline{x}^\neg$. We show that $\widetilde{w}$ is a Lipschitz solution of the mirror flow with loss $\ell$ and mirror potential $h(x) = \sum_{i=1}^{d} \lambda_i \vert x_i \vert$, initialized at $\widetilde{w}(0) = \overline{w}_0$, with primal variable $\widetilde{x}$. Indeed, by the definition of $\overline{w}_0$, $\widetilde{w}(0) = \frac{\overline{w}_0^\pos - \overline{w}_0^\neg}{2} = \overline{w}_0$. Further,
\begin{align*}
	\frac{\diff \widetilde{w}}{\diff s}(s) &= \frac{1}{2} \left(\frac{\diff \overline{w}^\pos}{\diff s}(s) - \frac{\diff \overline{w}^\neg}{\diff s}(s)\right) \\
	&= -\frac{1}{2} \left(\nabla_{x^\pos} \widetilde{\ell}(\overline{x}^\pos(s), \overline{x}^\neg(s)) - \nabla_{x^\neg} \widetilde{\ell}(\overline{x}^\pos(s), \overline{x}^\neg(s))\right) \\
	&= -\frac{1}{2} \left(\nabla \ell(\overline{x}^\pos(s) - \overline{x}^\neg(s)) + \nabla \ell(\overline{x}^\pos(s) - \overline{x}^\neg(s))\right) \\
	&= -\nabla \ell(\widetilde{x}(s)) \, .
\end{align*}
Note that a similar computation gives $\frac{\diff}{\diff s}\left(-\frac{\overline{w}^\pos(s)+\overline{w}^\neg(s)}{2}\right) = 0$. Thus, for all $s \geq 0$, 
\begin{equation}
	\label{eq:cst-lambda}
	-\frac{\overline{w}^\pos(s)+\overline{w}^\neg(s)}{2} = -\frac{\overline{w}^\pos(0)+\overline{w}^\neg(0)}{2} = \lambda > 0 \, .
\end{equation}
We now show $\widetilde{w}(s) \in \partial h(\widetilde{x}(s))$. Recall that $(\overline{w}^\pos(s), \overline{w}^\neg(s)) \in \partial \iota_{\R^{2d}_{\geq 0}}(\overline{x}^\pos(s), \overline{x}^\neg(s))$, that is, $\overline{x}^\pos(s), \overline{x}^\neg(s) \geq 0$, $\overline{w}^\pos(s), \overline{w}^\neg(s) \leq 0$ and $\langle \overline{w}^\pos(s), \overline{x}^\pos(s) \rangle = \langle \overline{w}^\neg(s), \overline{x}^\neg(s) \rangle = 0$. Let $i \in \{1, \dots, d\}$. 
\begin{itemize}
	\item If $\overline{x}_i(s) > 0$, then we must have $\overline{x}_i^\pos(s) > 0$. Thus $\overline{w}_i^\pos(s) = 0$. Thus, by Eq.~\eqref{eq:cst-lambda}, $\widetilde{w}_i(s) = -\frac{\overline{w}_i^\neg(s)}{2} = \lambda_i$. 
	\item If $\overline{x}_i(s) < 0$, then we must have $\overline{x}_i^\neg(s) > 0$. Thus $\overline{w}_i^\neg(s) = 0$. Thus, by Eq.~\eqref{eq:cst-lambda}, $\widetilde{w}_i(s) = \frac{\overline{w}_i^\pos(s)}{2} = -\lambda_i$.
	\item In all cases,
	\begin{equation*}
		\left\vert \widetilde{w}_i(s)\right\vert = \left\vert \frac{\overline{w}_i^\pos(s)- \overline{w}_i^\neg(s)}{2}\right\vert \leq \frac{\left\vert \overline{w}_i^\pos(s)\right\vert + \left\vert \overline{w}_i^\neg(s)\right\vert}{2} = \lambda_i \, .
	\end{equation*}
\end{itemize} 
These points show that $\widetilde{w}(s) \in \partial h(\widetilde{x}(s))$, and thus complete the proof that $\widetilde{w}(s)$ is a Lipschitz solution of the mirror flow with loss $\ell$ and mirror potential $h$, such that $\widetilde{w}(0) = \overline{w}_0$. As such a solution is unique, we can conclude that 
\begin{equation*}
	\overline{w}(s) = \widetilde{w}(s) = \frac{\overline{w}^\pos(s) - \overline{w}^\neg(s)}{2} \, .
\end{equation*}
Note that, as we do not have uniqueness of the primal solution, we do not have $\overline{x}(s) = \widetilde{x}(s)$ a priori. However, as explained below Theorem~\ref{thm:existence-mirror-flow}, we must have $\nabla \ell(\overline{x}(s)) = \nabla \ell(\widetilde{x}(s))$ and $\ell(\overline{x}(s)) = \ell(\widetilde{x}(s))$ a.e. 

We are now ready to prove the statements of the theorem.
\begin{enumerate}[label=(\alph*)]
	\item By Eq.~\eqref{eq:lifted-grad}, we have the following convergence in $L^2([0,S], \R^d, s\diff s)$:
	\begin{align*}
		\nabla \ell(\overline{x}^\varepsilon(s))
		&= \nabla \ell(\overline{x}^{\pos, \varepsilon}(s) - \overline{x}^{\neg, \varepsilon}(s)) \\
		&= \nabla_{x^\pos} \widetilde{\ell}(\overline{x}^{\pos, \varepsilon}(s), \overline{x}^{\neg, \varepsilon}(s)) \\
		&\xrightarrow[\varepsilon\to 0]{} \nabla_{x^\pos} \widetilde{\ell}(\overline{x}^{\pos}(s), \overline{x}^{\neg}(s)) \\
		&= \nabla \ell(\overline{x}^\pos(s) - \overline{x}^\neg(s)) \\
		&= \nabla \ell(\widetilde{x}(s)) = \nabla \ell(\overline{x}(s)) \, .
	\end{align*}
	This proves the first statement. Further, by Theorem~\ref{thm:existence-mirror-flow},
	\begin{align*}
		\ell(\overline{x}(s)) = \min \left\{\ell(y)\,\middle|\, y\in \R^d \text{ satisfying }\begin{cases}
		\text{if }\overline{w}_i(s) = \lambda_i, & y_i \geq 0 \, , \\
		\text{if }\overline{w}_i(s) \in (-\lambda_i, \lambda_i), & y_i = 0 \, , \\
		\text{if }\overline{w}_i(s) = -\lambda_i, & y_i \leq 0 \, .
		\end{cases}\right\}  \, .  
	\end{align*}
	Finally, by Eq.~\eqref{eq:lifted-values}, we have the following convergence in $L^1([0,S], \R, s\diff s)$:
	\begin{equation*}
		\ell(\overline{x}^\varepsilon(s)) = \widetilde{\ell}(\overline{x}^{\pos, \varepsilon}(s), \overline{x}^{\neg, \varepsilon}(s)) \xrightarrow[\varepsilon\to 0]{} \widetilde{\ell}(\overline{x}^\pos(s), \overline{x}^\neg(s)) = \ell(\widetilde{x}(s)) = \ell(\overline{x}(s)) \, .
	\end{equation*}
	This finishes the proof of part~\ref{it:dln-1}.
	\item Let $\overline{y}:[0,S]\to\R^d$ be an accumulation point of $\overline{x}^\varepsilon$ in the sense of essential pointwise convergence. To use the lifted result above, we need to show that $(\overline{y}_+(s), \overline{y}_-(s))$ is an accumulation point of $(\overline{x}^{\pos, \varepsilon},\overline{x}^{\neg, \varepsilon})$ in the same sense.
	
	Recall that, from Eq.~\eqref{eq:cst-product}, $x^\pos \circ x^\neg$ is a constant of the dynamics. As a consequence,
	\begin{equation*}
		\overline{x}^{\pos, \varepsilon}(s) \circ \overline{x}^{\neg, \varepsilon}(s) = \overline{x}^{\pos, \varepsilon}_0 \circ \overline{x}^{\neg, \varepsilon}_0 = e^{2 w_0^{\pos, \varepsilon}} \circ e^{2 w_0^{\neg, \varepsilon}} = e^{2 \mu^\varepsilon (\overline{w}_0^{\pos, \varepsilon} + \overline{w}_0^{\neg, \varepsilon})} \, ,
	\end{equation*} 
	where $\mu^\varepsilon\to \infty$ and $\overline{w}_0^{\pos, \varepsilon} + \overline{w}_0^{\neg, \varepsilon} \xrightarrow[\varepsilon \to 0]{} \overline{w}_0^\pos + \overline{w}_0^\neg = -2\lambda < 0$. Thus we obtain, for all $s \geq 0$,
	\begin{equation*}
		\overline{x}^{\pos, \varepsilon}(s) \circ \overline{x}^{\neg, \varepsilon}(s) \xrightarrow[\varepsilon \to 0]{} 0 \, . 
	\end{equation*}
	Moreover, there exists $\varepsilon_k \to 0$ such that $\overline{x}^{\pos, \varepsilon_k}(s) - \overline{x}^{\neg, \varepsilon_k}(s) = \overline{x}^{\varepsilon_k}(s) \xrightarrow[k \to \infty]{} \overline{y}(s)$ for a.e.~$s \in [0,S]$. Combining these two convergences, we have that 
	\begin{equation*}
		\left(\overline{x}^{\pos, \varepsilon_k}(s), \overline{x}^{\neg, \varepsilon_k}(s)\right) \xrightarrow[k \to \infty]{} (\overline{y}_+(s), \overline{y}_-(s)) \, , 
	\end{equation*}
	for a.e.~$s \in [0,S]$. This proves that $(\overline{y}_+,\overline{y}_-)$ is an accumulation point of $(\overline{x}^{\pos, \varepsilon},\overline{x}^{\neg, \varepsilon})$ in the sense of essential pointwise convergence. By Eq.~\eqref{eq:lifted-acc}, for a.e.~$s \in [0,S]$,
	\begin{equation*}
		(\overline{y}_+(s), \overline{y}_-(s)) \in \Argmin \left\{\widetilde{\ell}(y^\pos, y^\neg) \,\middle|\, \substack{y^\pos, y^\neg \in \R^d_{\geq 0} , \\ \langle \overline{w}^\pos(s), y^\pos \rangle = \langle \overline{w}^\neg(s), y^\neg \rangle = 0 }\right\} \, .
	\end{equation*}
	Using the same reasoning as above, $\overline{y}(s) = \overline{y}_+(s) - \overline{y}_-(s)$ satisfies
	\begin{equation*}
	\begin{cases}
		\text{if }\overline{w}_i(s) = \lambda_i, & \overline{y}_i(s) \geq 0 \, , \\
		\text{if }\overline{w}_i(s) \in (-\lambda_i, \lambda_i), & \overline{y}_i(s) = 0 \, , \\
		\text{if }\overline{w}_i(s) = -\lambda_i, & \overline{y}_i(s) \leq 0 \, .
		\end{cases}
		\end{equation*}
		Moreover, if $y$ satisfies 
		\begin{equation*}
		\begin{cases}
		\text{if }\overline{w}_i(s) = \lambda_i, & y_i \geq 0 \, , \\
		\text{if }\overline{w}_i(s) \in (-\lambda_i, \lambda_i), & y_i = 0 \, , \\
		\text{if }\overline{w}_i(s) = -\lambda_i, & y_i \leq 0 \, ,
		\end{cases}
		\end{equation*}
		then $(y_+, y_-)$ satisfies
		\begin{equation*}
		\begin{cases}
			y_+ \geq 0, y_- \geq 0 \, , \\
			\langle \overline{w}^\pos(s), y_+ \rangle = \langle \overline{w}^\neg(s), y_- \rangle = 0,
		\end{cases}
		\end{equation*}
		and thus
		\begin{equation*}
			\ell(\overline{y}(s)) = \widetilde{\ell}(\overline{y}_+(s), \overline{y}_-(s)) \leq \widetilde{\ell}(y_+, y_-) = \ell(y) \, .
		\end{equation*}
		This finishes the proof of part~\ref{it:dln-2}.
\end{enumerate}

\section{Complements to Section~\ref{sec:examples}}

\subsection{Details on the implementation}
\label{app:implementation}

\paragraph*{Mirror flows.}
We first discuss simulating the mirror flows for finite $\varepsilon$. To approximate the mirror flow, we directly implement its discretization, known as the mirror descent algorithm:
\begin{align*}
	\overline{w}_{k+1} &= \overline{w}_k - \eta \nabla \ell(\overline{x}_k) \\
	\overline{x}_{k+1} &= \nabla h^*(\overline{w}_{k+1})\, .
\end{align*}
In the simulations, we use a small step size $\eta \in [10^{-4}, 10^{-3}]$. 

\paragraph*{Limiting mirror flow.} Simulating the limiting mirror flow is more involved because $h^*$, being the support function of a convex set $C$, may be non-smooth and have a restricted domain. We propose the following algorithm, that uses the minimization property of Theorem~\ref{thm:existence-mirror-flow}. This algorithm is sensible but we do not have a consistency proof. 
\begin{align}
	\overline{v}_{k+1} &= \overline{w}_k - \eta \nabla \ell(\overline{x}_k)\nonumber\\
	\label{eq:projection-limiting-mirror-flow}
	\overline{w}_{k+1} &= \Pi_{\dom \partial \sigma_C}\left[\overline{v}_{k+1}\right] \\
	\label{eq:argmin-limiting-mirror-flow}
	\overline{x}_{k+1} &\in \Argmin_{x \in \partial \sigma_C(\overline{w}_{k+1})} \ell(x)\, ,
\end{align}
Here $\Pi_{\dom \partial \sigma_C}$ denotes the Euclidean projection onto $\dom \partial \sigma_C$ in Step~\eqref{eq:projection-limiting-mirror-flow}. In all cases, Step~\eqref{eq:argmin-limiting-mirror-flow} requires a convex optimization solver to minimize $\ell$ over $\partial \sigma_C(\overline{w}_{k+1})$.

We detail the steps of the algorithm in the three examples of Section~\ref{sec:examples}. All routines we refer to are implemented in Python using the \texttt{SciPy} library:
\begin{itemize}
	\item In the case of the non-negative orthant, $\dom \partial \sigma_C = \R_{\leq 0}^d$. Hence the projection is $w \mapsto \min \{0, w\}$. As explained in the main text, the constrained minimization requires minimizing $\ell$ over vectors $x \in \R^d_{\geq 0}$ supported on the zero coordinates of $\overline{w}_{k+1}$. We implement this by calling a non-negative least-squares solver on these coordinates, namely the \texttt{scipy.optimize.nnls} routine.
	\item For optimization on positive semidefinite matrices, the projection onto $\mathcal{S}^d_- = \dom \partial \sigma_C$ sets the positive eigenvalues of $\overline{W}_{k+1}$ to zero. The set $\partial \sigma_C(\overline{W}_{k+1})$ consists of positive semidefinite matrices whose range is a subspace of the kernel of $\overline{W}_{k+1}$; equivalently, it is encoded by the linear equality $\langle -\overline{W}_{k+1}, X \rangle = 0$ on $X \succeq 0$. The minimization of $\ell$ over $\partial \sigma_C(\overline{W}_{k+1})$ is then a semidefinite program, which we model in Python via the \texttt{CVXPY} library---a domain-specific language for convex optimization that canonicalizes the problem into a standard conic form---and solve using the SCS backend (Splitting Conic Solver), a first-order ADMM-based solver for cone programs. SCS is warm-started at each iteration from the previous iterate $\overline{X}_k$ to accelerate convergence.
	\item In the case of the simplex, there is no projection as $\dom \partial \sigma_C = \R^d$. The set $\partial \sigma_C(\overline{w}_{k+1})$ is the convex hull of the vertices of the simplex corresponding to the coordinates that are maximal in $\overline{w}_{k+1}$, which we identify in practice through a small tolerance on the gap to the maximum. As $\ell$ is quadratic in this example, the minimization of $\ell$ over $\partial \sigma_C(\overline{w}_{k+1})$ reduces to a quadratic program with non-negativity and sum-to-one constraints, which we solve by calling \texttt{scipy.optimize.minimize} with the SLSQP method (Sequential Least Squares Programming).
\end{itemize}

In all figures, we identify $s = t/\mu^\varepsilon = k\eta / \mu^\varepsilon$.

\subsection{Comparison with matrix factorization}
\label{app:MMcomp}

We numerically compare the matrix mirror flow studied in Section~\ref{subsec:matrix}
with the gradient flow on the matrix factorization parametrization $X = U U^\top$,
for the same matrix loss
\[
  \ell(X) = \tfrac{1}{2}\,\langle X,\,\mathcal{M}(X) \rangle
            - \langle Q,\,X \rangle + c \, .
\]
The matrix mirror flow with the von Neumann potential satisfies
$\dot W = -\nabla \ell(X)$, $X = \exp W$, initialized at
$W^\varepsilon_0 = (\log \varepsilon)\,\I_d$.
The matrix factorization gradient flow optimizes $\ell$ directly through the
parametrization $X = U U^\top$:
\[
  \frac{\diff U}{\diff t} = - \nabla_U \ell(U U^\top)
    = -2 \big(\mathcal{M}(U U^\top) - Q\big)\, U \, ,
    \qquad U(0) = \sqrt{\varepsilon}\, \I_d \, ,
\]
so that $X(0) = \varepsilon \I_d$ in both cases.

\paragraph{Time rescaling.}
We plot trajectories against the rescaled time $s$ from
Section~\ref{subsec:matrix}, with $\mu^\varepsilon = \log(1/\varepsilon)$.
For the mirror flow we set $s = t/\mu^\varepsilon$. For the matrix
factorization flow, the chain rule on $X = U U^\top$ multiplies the rate
of every eigenvalue of $X$ by a factor $4$ relative to the mirror flow; to align the time axes we therefore plot
it against $s = 4\, t / \mu^\varepsilon$. Under this convention both flows
have activations at $\mathcal{O}(1)$ rescaled times.

\paragraph{Setup.}
We use the same measurement model as in Section~\ref{subsec:matrix}:
$\mathcal{M} = \tfrac{1}{n} \sum_{i=1}^n A_i \otimes A_i$ where the $(A_i)$
are i.i.d. GOE matrices, and $Q = \mathcal{M}(Z)$ for an independent GOE $Z$.
Here we take $d = 10$, $n = 20$, and $\varepsilon = 10^{-100}$.

\paragraph{Results.}
We run the comparison for four random seeds and report
the eigenvalue and excess-loss trajectories per seed in
Figure~\ref{fig:MMcomp_eig}. We observe that:
\begin{itemize}
  \item Both flows perform incremental learning: eigenvalues activate one at
        a time, and the dynamics pauses at intermediate low-rank saddles
        before escaping toward the next one.
  \item The two flows can visit \emph{different} intermediate saddles, both
        in their levels and in the order in which the eigenvalues activate.
        What they share is the qualitative incremental-rank structure: each
        flow rests on a sequence of low-rank plateaus before escaping
        toward higher-rank configurations, with the rank monotonically
        increasing in time.
  \item Their pacing on the rescaled time axis differs: the matrix
        factorization flow has sharp activations (it rests longer on each
        saddle, then escapes quickly), while the mirror flow has more
        smeared-out activations (longer escape transients). This confirms that the two flows
        are \emph{not} equivalent dynamics, even though they share the same
        low-rank bias.
\end{itemize}

\begin{figure}[ht]
  \centering
  \includegraphics[width=1\textwidth]{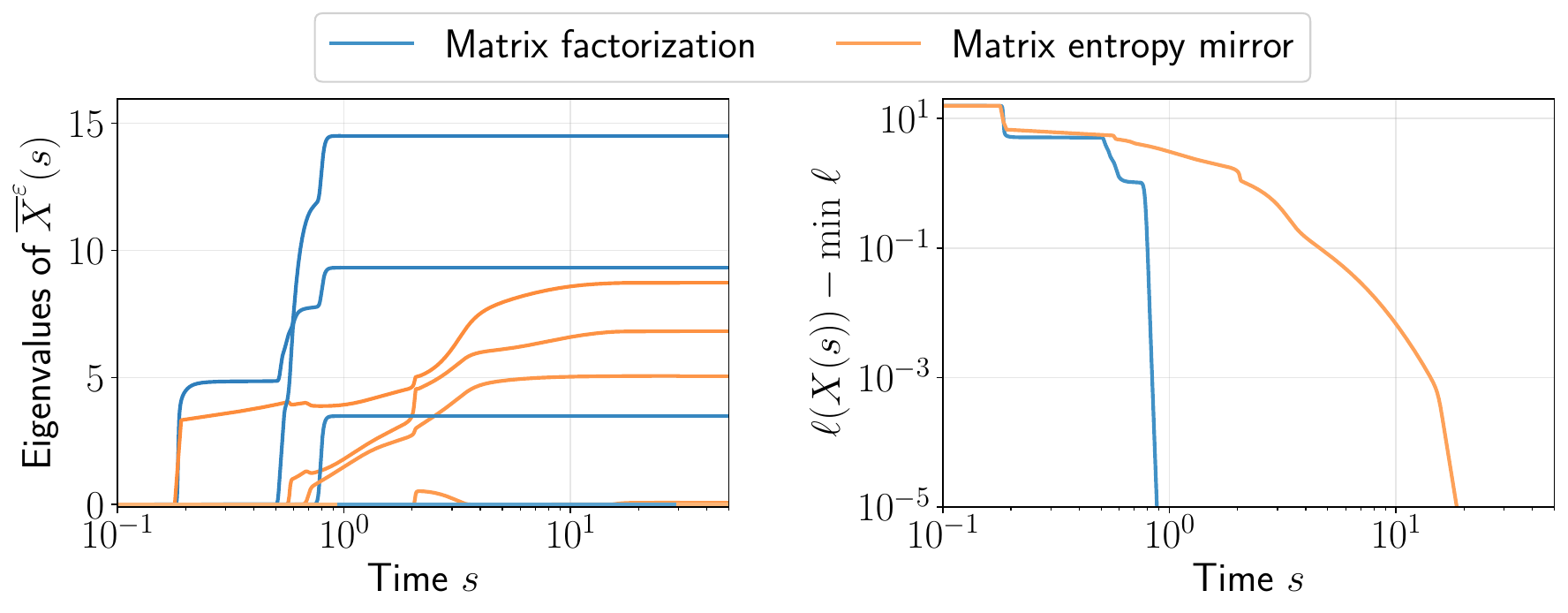}
  \vspace*{0.5cm}
  \includegraphics[width=1\textwidth]{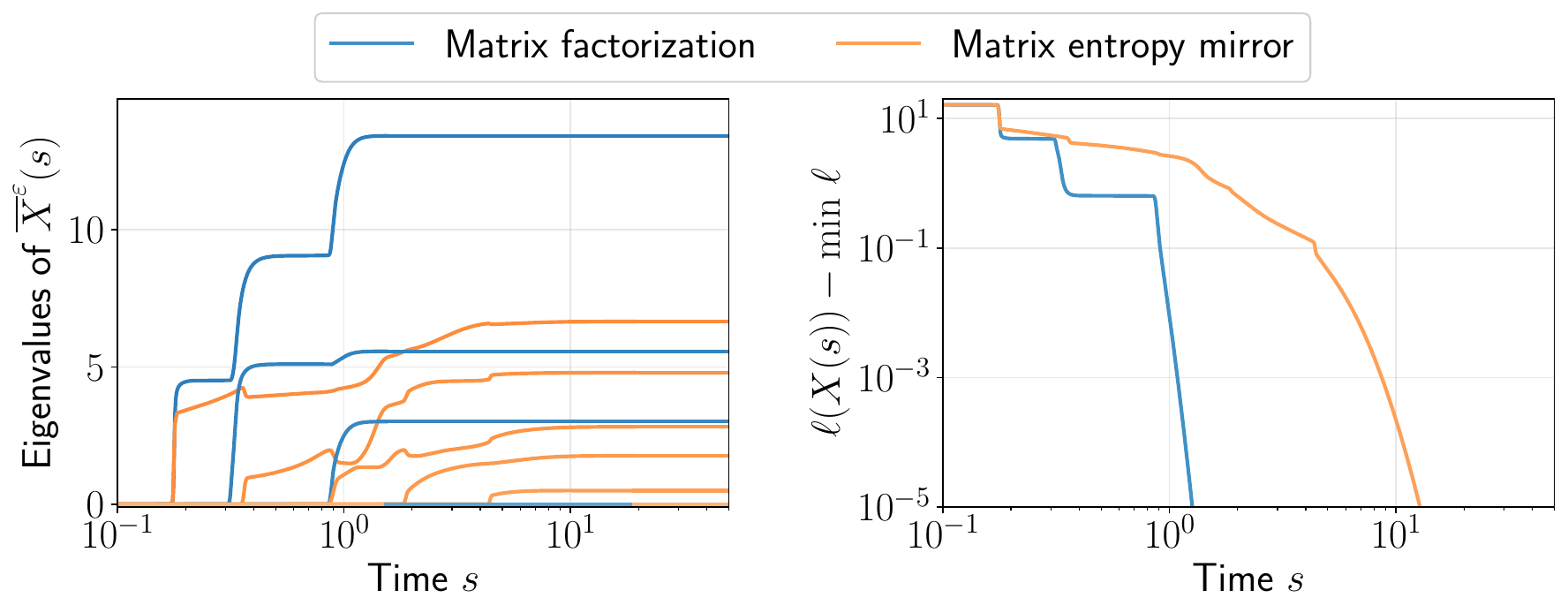}
  \includegraphics[width=1\textwidth]{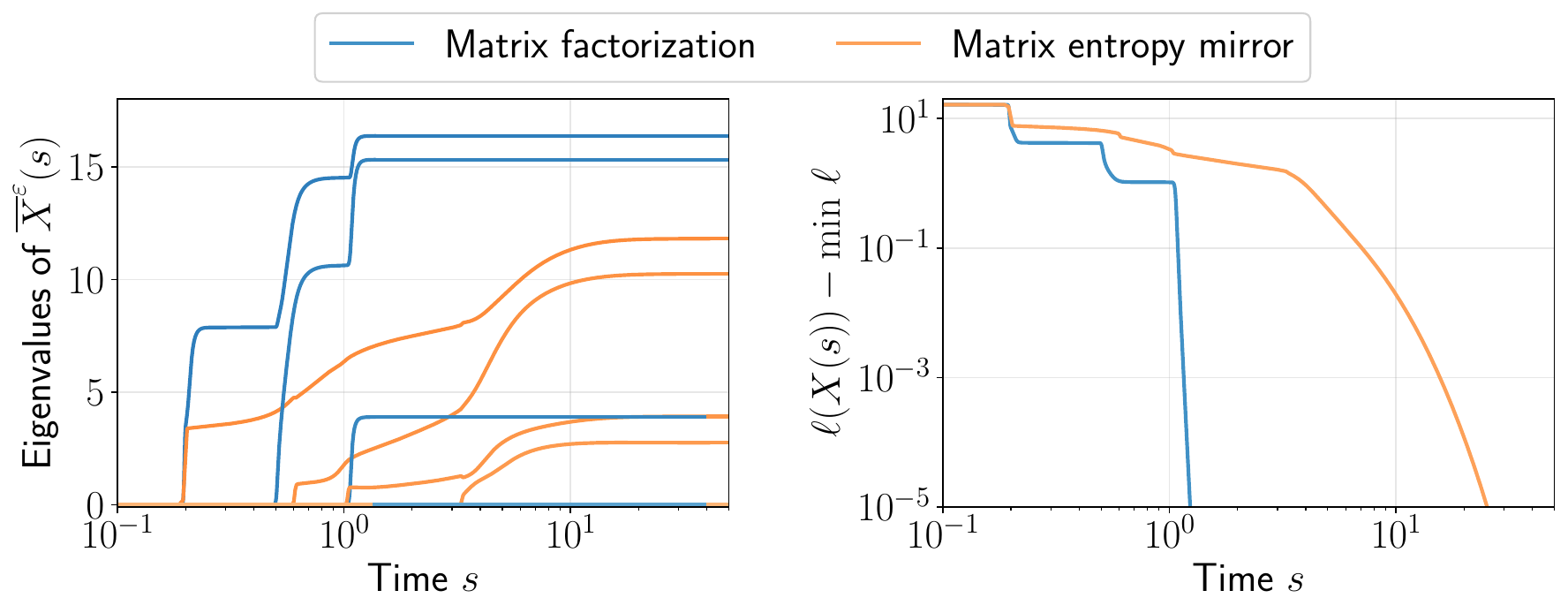}
  \includegraphics[width=1\textwidth]{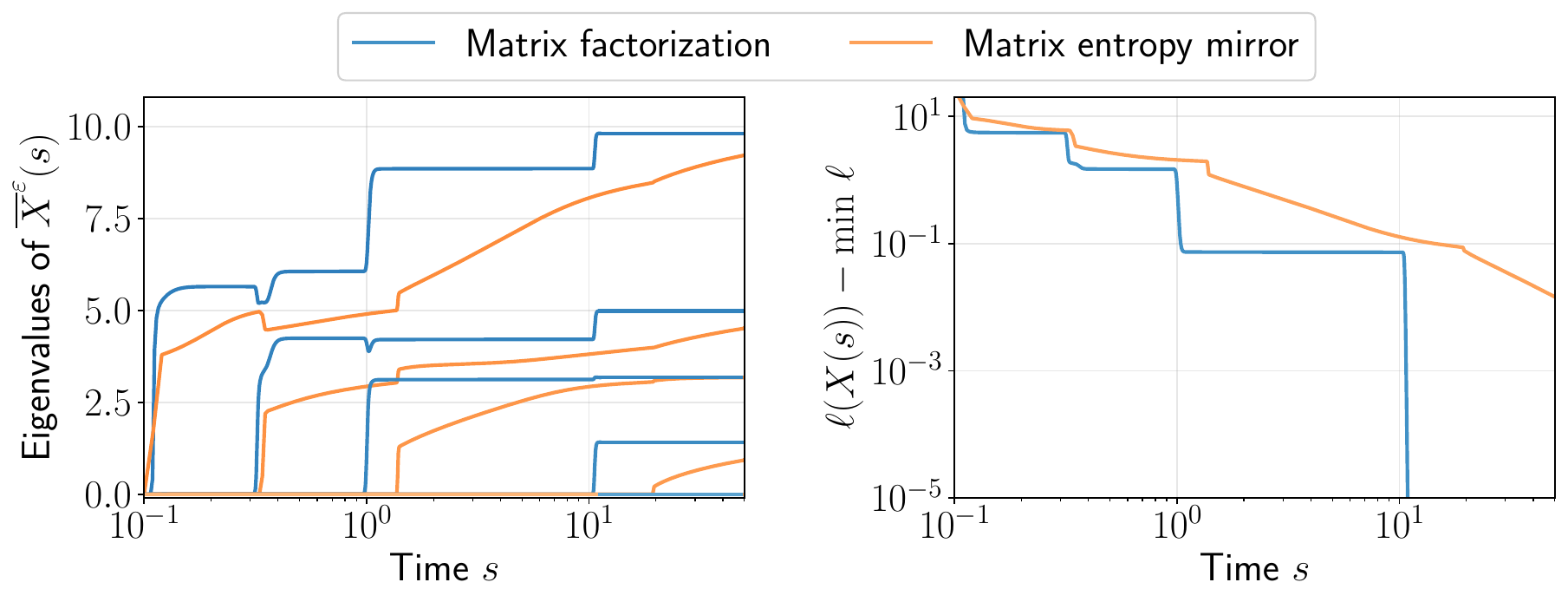}
  \caption{\textbf{(Left)} Eigenvalue trajectories of $\overline{X}^\varepsilon(s)$ for the
  matrix mirror flow (orange) and the matrix factorization gradient flow
  (blue), at $\varepsilon = 10^{-100}$, for four random seeds. \textbf{(Right)} Excess loss $\ell(\overline{X}^\varepsilon(s)) - \min \ell$ for the same flows and seeds.
  Both flows exhibit incremental
  rank-increase, but can visit different intermediate saddles in different
  orders, and pace their activations differently on the rescaled time
  axis $s$.}
  \label{fig:MMcomp_eig}
\end{figure}

\clearpage

\subsection{Additional lemmas}

\subsubsection{Conjugate of the entropic mirror potential on the simplex}

We state and prove the following lemma, which is used in the simulation of Section~\ref{subsec:simplex}. In this setting, the mirror potential is
\[
	h(x) = \sum_{i=1}^d (x_i \log x_i - x_i) + \iota_{\mathcal{S}_1}(x)
\]
if $x \in \R^d_{\geq 0}$, and $h(x) = +\infty$ otherwise, where $\mathcal{S}_1 = \{x \in \R^d \mid \sum_{i=1}^d x_i = 1\}$.
\begin{lem}
	\label{lem:mirror-potential-simplex}
	The conjugate function of $h$ is given by $\displaystyle h^*(w) = 1 + \log \left(\sum_{i=1}^d e^{w_i}\right)$, whose domain is $\R^d$. Moreover, for all $w \in \R^d$, $\displaystyle\nabla h^*(w) = e^{w}/\|e^w\|_1$, where the exponential is taken coordinatewise.
\end{lem}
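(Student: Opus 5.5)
The conjugate can be computed directly from the definition. For $w \in \R^d$,
\begin{equation*}
	h^*(w) = \sup_{x \in \Delta_d} \sum_{i=1}^d \left(w_i x_i - x_i \log x_i + x_i\right) = 1 + \sup_{x \in \Delta_d} \sum_{i=1}^d \left(w_i x_i - x_i \log x_i\right) \, ,
\end{equation*}
where we used $\sum_i x_i = 1$ on $\Delta_d$. This is a maximization of a continuous, strictly concave function over the compact set $\Delta_d$, so the supremum is finite and attained at a unique point. In particular $h^*(w) < +\infty$ for every $w$, hence $\dom h^* = \R^d$.

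To identify the maximizer, I will use the Gibbs variational principle. Set $Z = \sum_j e^{w_j}$ and $p = e^w / Z \in \ri \Delta_d$. For $x \in \Delta_d$ one rewrites
\begin{equation*}
	\sum_i \left(w_i x_i - x_i \log x_i\right) = \log Z - \sum_i x_i \log \frac{x_i}{p_i} \, ,
\end{equation*}
using $w_i = \log p_i + \log Z$ and the convention $0 \log 0 = 0$. The last sum is the Kullback--Leibler divergence of $x$ from $p$. It is nonnegative by Jensen's inequality (concavity of $\log$) and vanishes only at $x = p$. This yields $h^*(w) = 1 + \log Z$, with unique maximizer $p$.

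For the gradient, $h^*$ is a smooth log-sum-exp function plus a constant. Differentiating directly gives $\nabla h^*(w) = e^w/\|e^w\|_1$, since $\|e^w\|_1 = Z$ because all coordinates are positive. Alternatively, $\partial h^*(w)$ is the set of maximizers in the definition of the conjugate (by the Fenchel--Young equality case), and that set is the singleton $\{p\}$. Both routes agree.

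The only point needing care is the boundary of the simplex, where $\log x_i$ is undefined. The KL rewriting handles this through the $0\log 0=0$ convention, and since $p$ lies in the relative interior, no boundary maximizer can arise. I expect no real obstacle beyond this.
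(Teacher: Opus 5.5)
Your proof is correct and takes a different route from the paper's. The paper does not evaluate the supremum over $\Delta_d$ directly. It writes $h = f + \iota_{\mathcal{S}_1}$, with $f$ the entropy on $\R^d_{\geq 0}$, and uses the known conjugates $f^*(w) = \sum_i e^{w_i}$ and $\iota_{\mathcal{S}_1}^*(w) = \lambda$ if $w = \lambda \mathbf{1}$ ($+\infty$ otherwise). It then invokes the conjugate-of-a-sum rule (exact infimal convolution of $f^*$ and $\iota_{\mathcal{S}_1}^*$) under the qualification condition $0 \in \ri(\R^d_{\geq 0} - \mathcal{S}_1)$. This reduces the problem to the one-dimensional minimization $\min_{\lambda \in \R} \sum_i e^{w_i - \lambda} + \lambda$, whose value is $1 + \log \sum_i e^{w_i}$ (attained at $e^{\lambda} = \sum_i e^{w_i}$); the gradient is then read off.

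Your Gibbs variational argument (nonnegativity of the Kullback--Leibler divergence) is self-contained. It needs no duality theorem and no constraint qualification. It also produces the maximizer $p = e^w/\|e^w\|_1$ explicitly, so via the Fenchel--Young equality case you identify $\partial h^*(w) = \{p\}$ directly, not only by differentiating log-sum-exp.

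The paper's argument is more modular. It reuses the orthant-entropy conjugate already used in the non-negative orthant example and treats the affine constraint $\sum_i x_i = 1$ as a scalar Lagrange multiplier $\lambda$. That scheme applies verbatim to other potentials restricted to an affine subspace, whereas your KL identity is specific to the entropy.
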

\begin{proof}
	The potential $h = f + \iota_{\mathcal{S}_1}$ can be decomposed as a sum of two functions, where in this proof $f$ denotes the entropic potential with domain $\R^d_{\geq 0}$. We first note that $f^*(w) = \sum_{i=1}^d e^{w_i}$, with domain $\R^d$. Second, we compute
	\begin{align*}
	\iota_{\mathcal{S}_1}^*(w) = \sup_{x \in \mathcal{S}_1} \langle w, x\rangle = 
	\begin{cases}
		\lambda, & \text{if } w  =  \lambda \mathbf{1} \, , \\
		+\infty, & \text{otherwise} \, .
	\end{cases}
	\end{align*}
	Since $0 \in \ri(\R^d_{\geq 0} - \mathcal{S}_1)$, we can apply~\cite[Theorem 15.3]{bauschke2017convex}, so that
	\begin{align*}
	h^*(w) &= \min_{u \in \R^d} f^*(w-u) + \iota_{\mathcal{S}_1}^*(u) = \min_{\lambda \in \R} \sum_{i=1}^d e^{w_i - \lambda} + \lambda = 1 + \log \left(\sum_{i=1}^d e^{w_i}\right) \, .
	\end{align*}
	The expression of $\nabla h^*$ is then immediate.
\end{proof}

\subsubsection{Subdifferential of the support function of the simplex}

Recall that for all $w \in \R^d$, we can define its active set of coordinates as $I(w) = \{k \mid w_k = \max_i w_i\}$. We prove here the following expression, which is used in the simulation of Section~\ref{subsec:simplex}.

\begin{lem}
	\label{lem:proof-support-function-simplex}
	For all $w \in \R^d$, 
	\begin{equation*}
		\partial \sigma_{\Delta_d}(w) = \left\{ \sum_{k \in I(w)} \alpha_k e_k \,\middle|\, \sum_{k \in I(w)} \alpha_k = 1,\, \alpha_k \geq 0 \right\} ,
	\end{equation*}
	where $(e_k)_{1 \leq k \leq d}$ are the canonical vectors.
\end{lem}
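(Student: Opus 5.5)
The plan is to use the characterization of the subdifferential of a support function. Since $\Delta_d$ is a nonempty compact convex set, $\sigma_{\Delta_d}$ is finite everywhere, and
\begin{equation*}
	\partial \sigma_{\Delta_d}(w) = \Argmax_{x \in \Delta_d} \langle w, x \rangle \, .
\end{equation*}
This follows from the conjugacy $\sigma_{\Delta_d}^* = \iota_{\Delta_d}$ and the equivalence $x \in \partial \sigma_{\Delta_d}(w) \iff w \in \partial \iota_{\Delta_d}(x)$ recalled in Section~\ref{sec:notations}. Indeed, $w \in \partial \iota_{\Delta_d}(x)$ means that $x \in \Delta_d$ and $\langle w, y - x\rangle \leq 0$ for all $y \in \Delta_d$, which says exactly that $x$ maximizes $\langle w, \cdot \rangle$ over $\Delta_d$. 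This reduces the lemma to identifying the maximizers of a linear form over the simplex.

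First I compute $\sigma_{\Delta_d}(w) = \max_i w_i =: m$. For $x \in \Delta_d$ we have $\langle w, x\rangle = \sum_k x_k w_k \leq m \sum_k x_k = m$, and equality holds at any $e_k$ with $k \in I(w)$.

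Next I characterize the equality case. Write $m - \langle w, x\rangle = \sum_k x_k (m - w_k)$. This is a sum of nonnegative terms, so it vanishes iff $x_k = 0$ whenever $w_k < m$, that is, iff $\supp x \subset I(w)$. Combined with $x \in \Delta_d$, the maximizers are exactly the vectors $\sum_{k \in I(w)} \alpha_k e_k$ with $\alpha_k \geq 0$ and $\sum_{k \in I(w)} \alpha_k = 1$, which is the claimed set.

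No step is a real obstacle. The only point needing care is justifying the identity $\partial\sigma_C(w) = \Argmax_{x \in C}\langle w, x\rangle$, which needs both the involution $\sigma_C^* = \iota_C$ and the subdifferential inversion rule; both are already quoted in the paper.
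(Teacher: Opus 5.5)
Your proof is correct, and it takes a different route from the paper's. The paper never passes through the conjugate. It works directly with the subgradient inequality $v_{\max} \geq w_{\max} + \langle p, v - w\rangle$ for all $v \in \R^d$ and checks the inclusion $\supseteq$ by hand. For $\subseteq$ it must first prove $p \in \Delta_d$, which it does by a scaling argument: taking $v = w + tu$ and letting $t \to \infty$ gives $\langle p, u\rangle \leq \max_i u_i$, and testing $u = -e_k$ and $u = \pm\mathbf{1}$ yields $p \in \Delta_d$. Only then does it take $v = 0$ to get $\langle p, w\rangle = w_{\max}$ and conclude that $p$ is supported on $I(w)$.

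Your use of $\iota_{\Delta_d}^* = \sigma_{\Delta_d}$ together with the inversion rule $x \in \partial\sigma_{\Delta_d}(w) \iff w \in \partial\iota_{\Delta_d}(x)$ does more work up front. It makes the membership $x \in \Delta_d$ automatic, since $\partial\iota_{\Delta_d}(x) = \emptyset$ off $\Delta_d$, and it absorbs the inclusion $\supseteq$ as well. What remains is exactly the equality-case computation $\sum_k x_k(m - w_k) = 0$, which is the final step of the paper's argument.

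Your version is shorter and applies verbatim to any nonempty closed convex $C$. For instance, it also justifies the descriptions of $\partial\sigma_C$ stated without proof for $\R^d_{\geq 0}$ and $\mathcal{S}^d_+$ in Section~\ref{sec:examples}. The price is that it invokes the conjugacy and subdifferential-inversion facts recalled in Section~\ref{sec:notations}. The paper's version is fully elementary: in this special case it re-derives by hand that $\partial\sigma_C(w)$ is the set of maximizers of $\langle w,\cdot\rangle$ over $C$.
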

\begin{proof}
	In this proof, for any vector $w \in \R^d$, we write $w_{\max} := \max_i w_i = \sigma_{\Delta_d}(w)$. Note that
	\begin{align*}
		p \in \partial \sigma_{\Delta_d}(w) &\iff \forall v \in \R^d, v_{\max} \geq w_{\max} + \langle p, v - w\rangle
	\end{align*}
	We show the lemma by double inclusion.

	First, let $p$ be of the form $\sum_{k \in I(w)} \alpha_k e_k$ where $\sum_{k \in I(w)} \alpha_k = 1$ and $\alpha_k \geq 0$. Let $v \in \R^d$. We have
	\begin{align*}
	\langle p, v - w\rangle &= \sum_{k \in I(w)} \alpha_k (v_k - w_k) = \sum_{k \in I(w)} \alpha_k v_k - w_{\max} \leq v_{\max} - w_{\max}\,,
	\end{align*}
	hence $\partial \sigma_{\Delta_d}(w) \supseteq \left\{ \sum_{k \in I(w)} \alpha_k e_k \,\middle|\, \sum_{k \in I(w)} \alpha_k = 1,\, \alpha_k \geq 0 \right\} $.

	Second, let $p \in \partial \sigma_{\Delta_d}(w)$.
	
	We first show that $p \in \Delta_d$. Let $u \in \R^d$, take $v = w + t u$ for some $t > 0$. We have that 
	\begin{align*}
	\max_{i} \{w_i + t u_i\} &\geq w_{\max} + t\langle p, u\rangle \, ,
	\end{align*}
	Dividing by $t$ and letting $t \to \infty$, as $\max_{i} \{w_i + t u_i\} / t \to \max_i u_i$, we obtain $\langle p, u\rangle \leq \max_i u_i$. Since this holds for all $u \in \R^d$, evaluating the inequality at $u = -e_k$ gives $p \in \R_{\geq 0}^d$, and evaluating it at $u = \pm \mathbf{1}$ gives $p \in \mathcal{S}_1$. Hence $p \in \Delta_d$.

	We now show that $p$ is supported on the active coordinates of $w$. Taking $v = 0$, we obtain $\langle p, w\rangle \geq w_{\max}$. Since $p \in \Delta_d$, we also have $\langle p, w\rangle \leq w_{\max}$. Hence $\langle p, w\rangle = w_{\max}$. Since $p \in \Delta_d$, this can happen only if $p$ is supported on the active coordinates of $w$. This finishes the proof of the second inclusion, and thus of the lemma.
\end{proof}

\clearpage

\subsection{Additional plots showing trajectories of the dual variables}

\subsubsection{Optimization on the non-negative orthant \texorpdfstring{$\R_{\geq 0}^d$}{}}
\label{app:diagonal_simulation}

We provide here the trajectory of the dual variable $\overline{w}$ in the simulation setting of Section~\ref{subsec:diagonal}. 

Figure~\ref{fig:diagonal_simulation_v} displays this trajectory.

\begin{figure}[h!]
	\centering
	\includegraphics[width=1\textwidth]{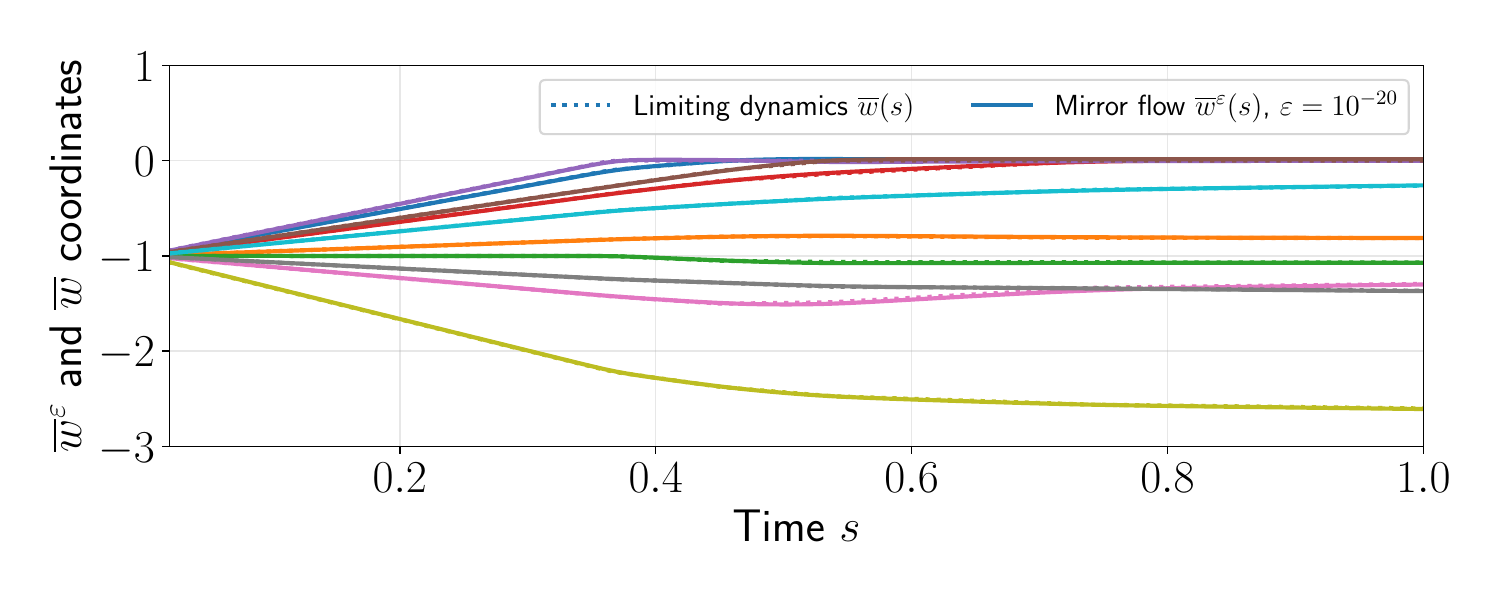}
	\includegraphics[width=1\textwidth]{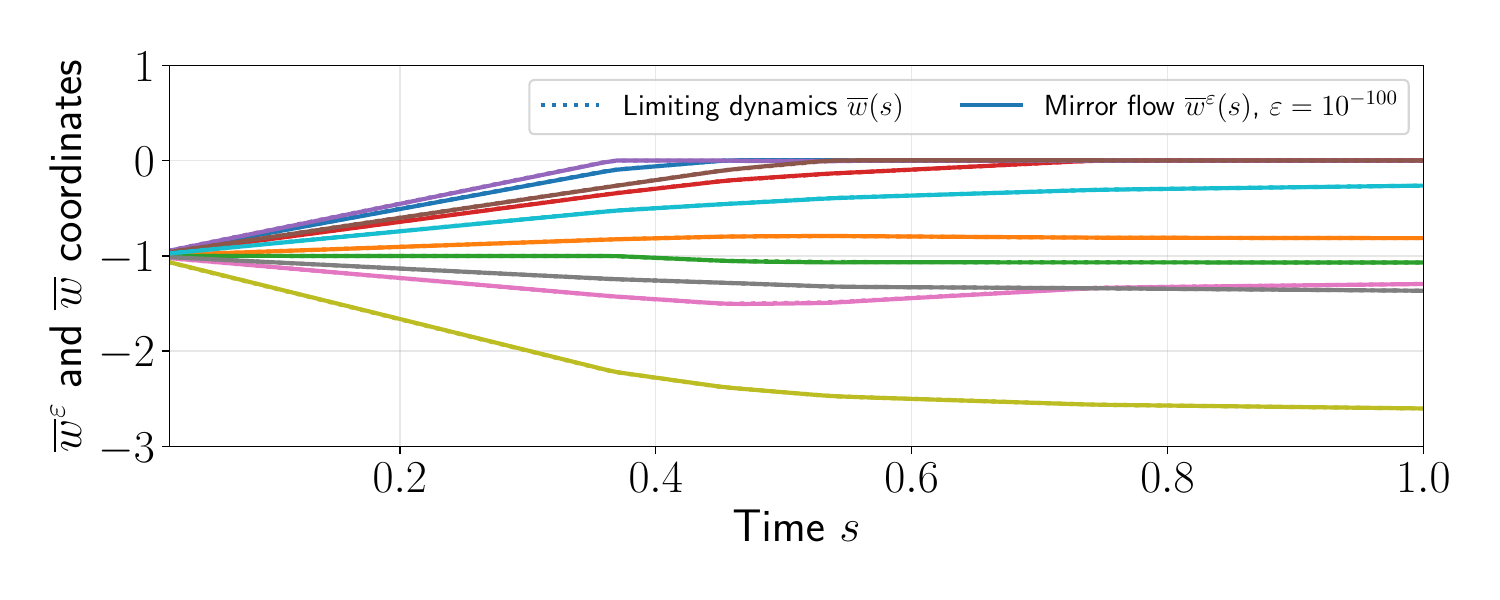}
	\caption{Trajectories of the dual variable $\overline{w}$ for the same simulation as in Figure~\ref{fig:diagonal_simulation} for $\varepsilon = 10^{-20}$ (top plot) and $\varepsilon = 10^{-100}$ (bottom plot). The trajectory of $\overline{w}$ is piecewise affine, with breakpoints corresponding to the times where a coordinate of $\overline{w}$ hits zero.}
	\label{fig:diagonal_simulation_v}
\end{figure}

\clearpage

\subsubsection{Optimization on the positive semidefinite cone \texorpdfstring{$\mathcal{S}^d_+$}{}}
\label{app:MM_simulation}

We provide here the trajectory of the dual variable $\overline{W}$ in the simulation setting of Section~\ref{subsec:matrix}. 

Figure~\ref{fig:MM_simulation_dual} displays its eigenvalues.

\begin{figure}[h!]
	\centering
	\includegraphics[width=1\textwidth]{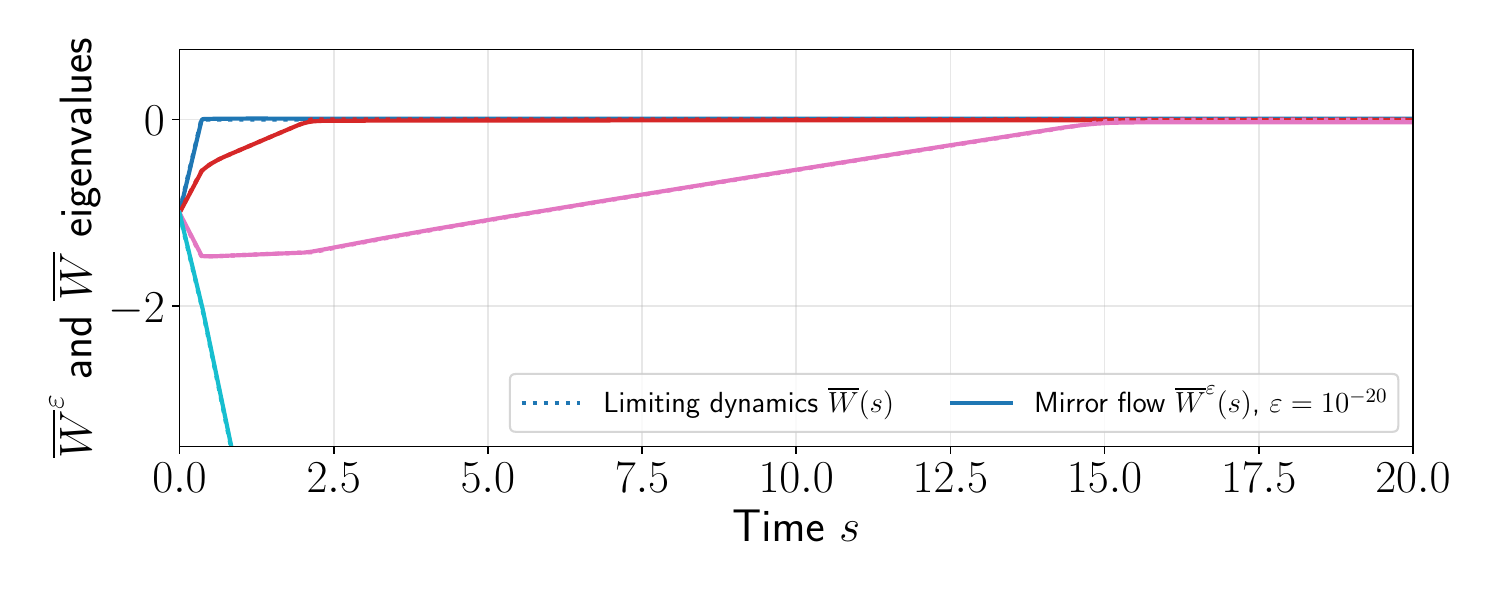}
	\includegraphics[width=1\textwidth]{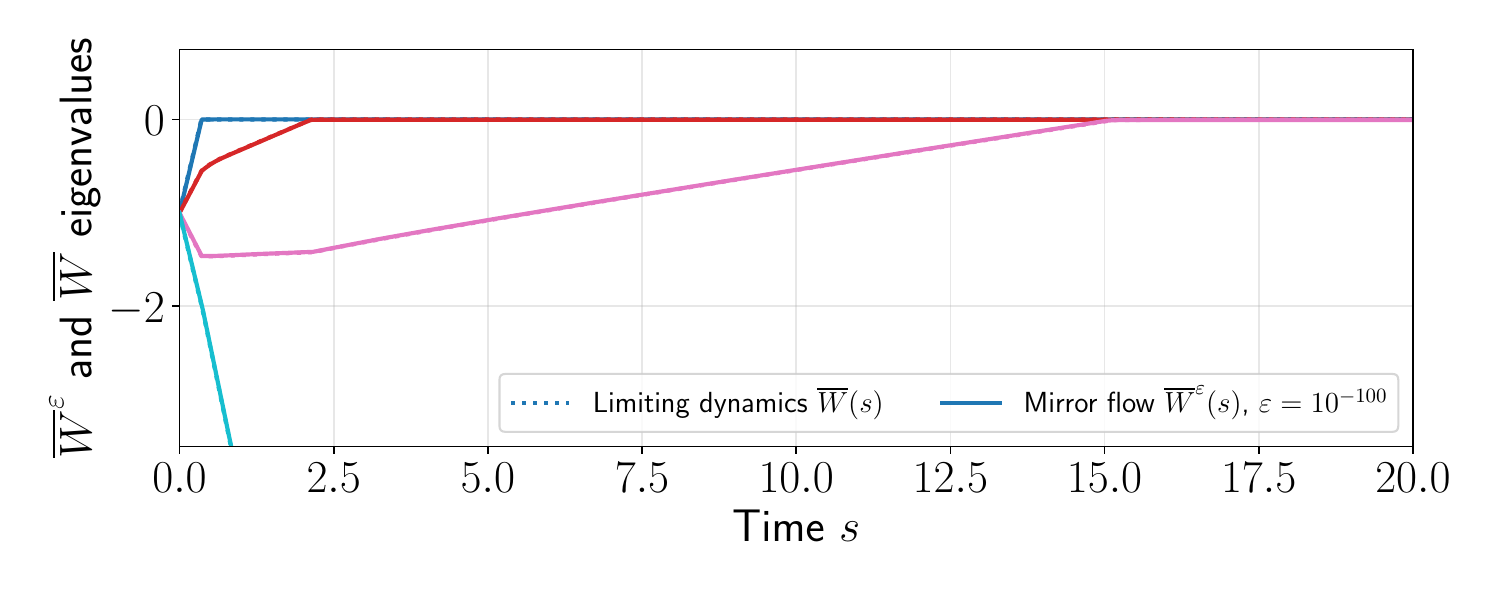}
	\caption{Eigenvalues of the dual variable $\overline{W}$ for the same simulation as in Figure~\ref{fig:MM_simulation} for $\varepsilon = 10^{-20}$ (top plot) and $\varepsilon = 10^{-100}$ (bottom plot). Eigenvalues that reach zero remain there, enlarging the active eigenspace in the primal.}
	\label{fig:MM_simulation_dual}
\end{figure}

\clearpage

\subsubsection{Optimization on the probability simplex \texorpdfstring{$\Delta_d$}{}}
\label{app:simplex_simulation}

We provide here the trajectory of the dual variable $\overline{w}$ in the simulation setting of Section~\ref{subsec:simplex}. 

Figure~\ref{fig:simplex_simulation_dual} displays its coordinates.

\begin{figure}[h!]
	\centering
	\includegraphics[width=1\textwidth]{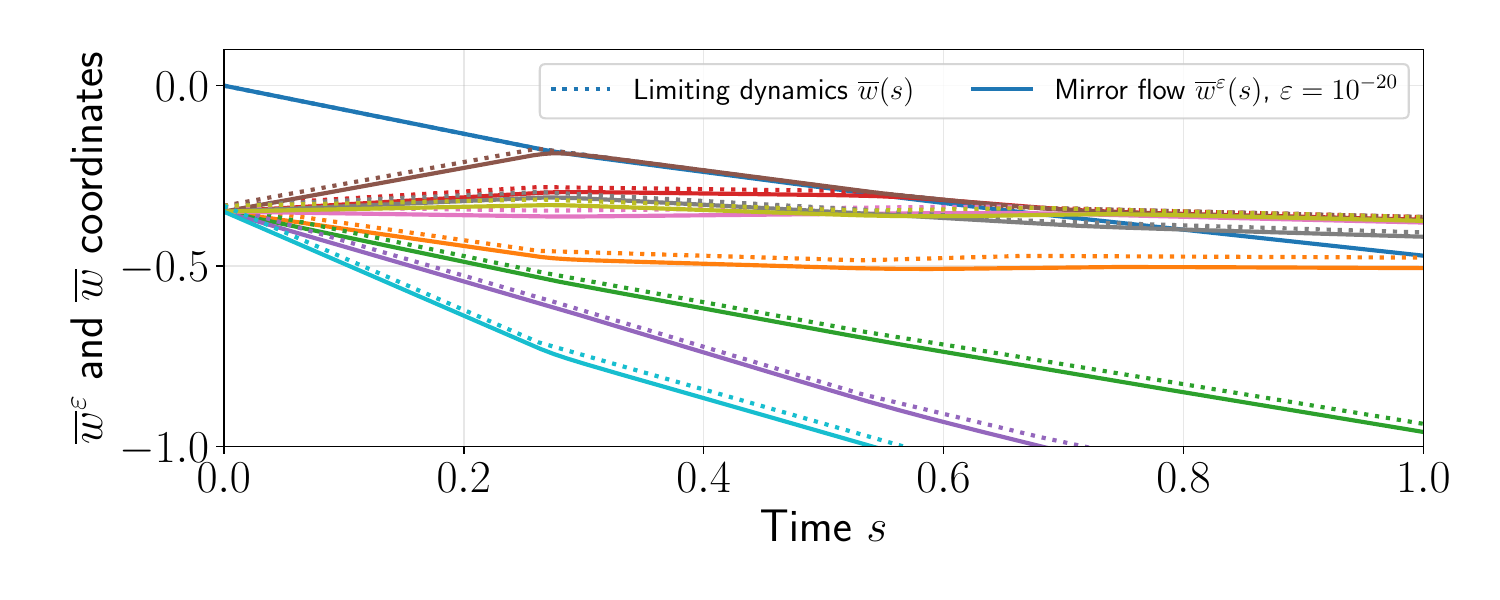}
	\includegraphics[width=1\textwidth]{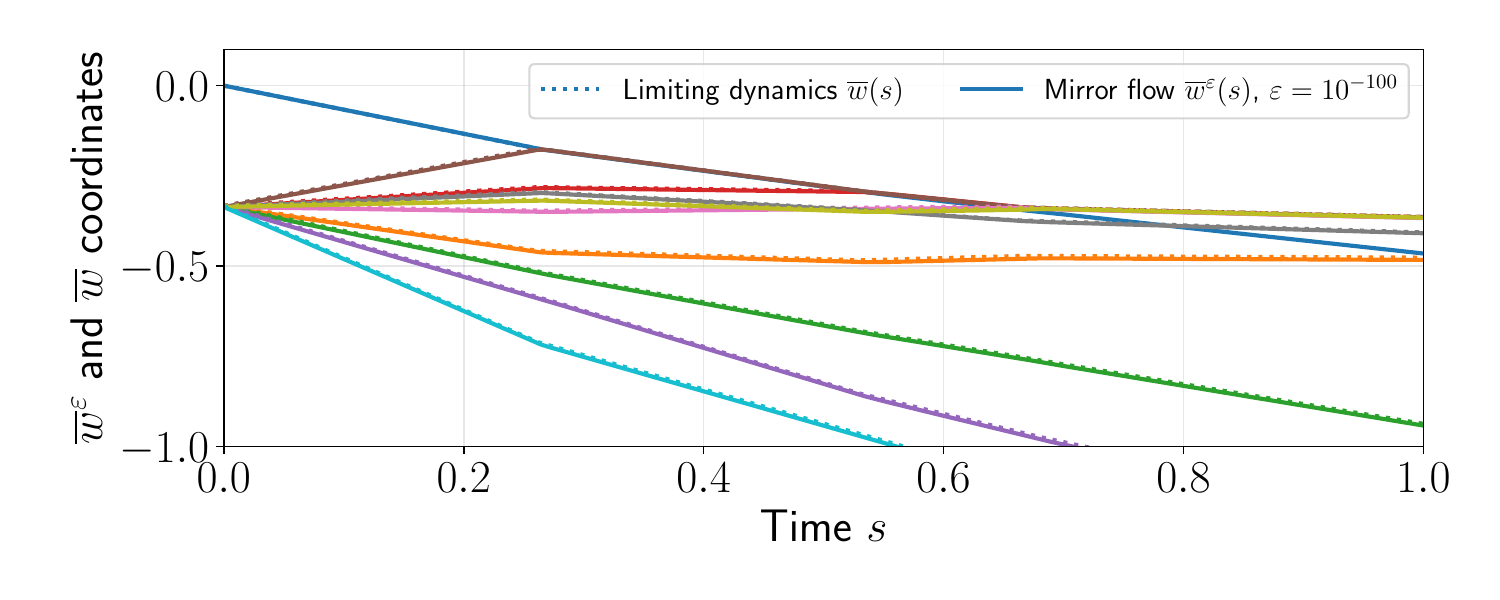}
	\caption{Trajectories of the coordinates of the dual variable $\overline{w}$ for the same simulation as in Figure~\ref{fig:simplex_simulation} for $\varepsilon = 10^{-20}$ (top plot) and $\varepsilon = 10^{-100}$ (bottom plot). The trajectory of $\overline{w}$ is piecewise affine, with breakpoints corresponding to the times a new coordinate of $\overline{w}(s)$ hits its maximum.}
	\label{fig:simplex_simulation_dual}
\end{figure}

\end{document}

%% file: Auxiliaries/Packages.tex
\usepackage{subcaption}
\usepackage{amssymb}
\usepackage{amsmath}
\usepackage{amsthm}
\usepackage{xcolor}
\usepackage[normalem]{ulem}
\usepackage{enumitem}
\usepackage[margin=3cm]{geometry}
\usepackage[hidelinks]{hyperref}
\usepackage{comment}
\usepackage{yhmath}
\usepackage{tikz}
\usetikzlibrary{arrows.meta,calc,angles,quotes}
\usepackage{algorithm}
\usepackage{algpseudocode}
\usepackage{float}
\usepackage{todonotes}
\usepackage{ulem}
\DeclareGraphicsExtensions{.pdf,.png}

%% file: Auxiliaries/Macros.tex
\DeclareMathOperator*{\argmin}{argmin}

\DeclareMathOperator{\Tr}{Tr}

\DeclareMathOperator{\Span}{Span}
\let\ker\relax
\DeclareMathOperator{\ker}{Ker}
\DeclareMathOperator{\rank}{rank}
\DeclareMathOperator{\I}{I}
\DeclareMathOperator{\dom}{dom}

\DeclareMathOperator{\Argmin}{Argmin}

\DeclareMathOperator{\supp}{supp}

\DeclareMathOperator{\ri}{ri}
\DeclareMathOperator{\aff}{aff}

\DeclareMathAlphabet{\mathbbb}{U}{bbold}{m}{n}

\newcommand{\diff}{\mathrm{d}}

\newcommand{\N}{\mathbb{N}}

\newcommand{\R}{\mathbb{R}}

\newcommand{\pos}{\text{pos}}
\renewcommand{\neg}{\text{neg}}

\renewcommand{\leq}{\leqslant}
\renewcommand{\geq}{\geqslant}

\theoremstyle{plain}
\newtheorem{proposition}{Proposition}[section]
\newtheorem{thm}[proposition]{Theorem}
\newtheorem{lem}[proposition]{Lemma}

\theoremstyle{definition}

\newtheorem{definition}[proposition]{Definition}

\theoremstyle{remark}
\newtheorem{example}[proposition]{Example}

\numberwithin{equation}{section}